\def\newaliasedtheorem#1[#2]#3{
  \newaliascnt{#1@alt}{#2}
  \newtheorem{#1}[#1@alt]{#3}
  \expandafter\newcommand\csname #1@altname\endcsname{#3}
}
\numberwithin{equation}{section}
\newtheoremstyle{slanted}{\topsep}{\topsep}{\slshape}{}{\bfseries}{.}{.5em}{}
\theoremstyle{plain}
\newtheorem{theorem}{Theorem}[section]
\theoremstyle{definition}
\theoremstyle{remark}
\newcommand{\setR}{\mathbb{R}}
\let\altphi\phi
\let\phi\varphi
\let\varphi\altphi
\let\altphi\undefined
\newcommand{\di}{\mathop{}\!\mathrm{d}}
\DeclareMathOperator{\supp}{supp}
\newcommand{\Ch}{{\sf Ch}}
\DeclareMathOperator{\Lip}{Lip}
\DeclareMathOperator{\Lipb}{Lip_b}
\newcommand{\leb}{\mathscr{L}}
\newcommand{\dist}{\mathsf{d}}
\newcommand{\meas}{\mathfrak{m}}
\DeclareMathOperator{\RCD}{RCD}
\newfont{\tmpf}{cmsy10 scaled 2500}
\newcommand{\mean}[1]{\,-\hskip-1.08em\int_{#1}}
\begin{document}
\title{Local spectral convergence in $\RCD^*(K,N)$ spaces}
\author{Luigi Ambrosio
\thanks{Scuola Normale Superiore, \url{luigi.ambrosio@sns.it}} \and
Shouhei Honda
\thanks{Tohoku University, \url{shonda@m.tohoku.ac.jp}}
} \maketitle

\begin{abstract} In this note we give necessary and sufficient conditions for the validity of the local spectral convergence, in balls, 
on the $\RCD^*$-setting.
\end{abstract}

\tableofcontents

\section{Introduction}
Let us consider a measured Gromov-Hausdorff (mGH, for short) convergent sequence of compact $n$-dimensional Riemannian manifolds $M_i$ to a compact metric measure space $(X, \dist, \meas)$, called a compact Ricci limit space,
\begin{equation}\label{eq:smooth}
\left( M_i, \frac{\mathrm{vol}}{\mathrm{vol}\,M_i} \right) \stackrel{mGH}{\to} (X, \dist, \meas),
\end{equation}
with the uniform Ricci curvature bound from below $\mathrm{Ric}_{M_i} \ge K >-\infty$.
Then the spectral convergence proven in \cite{CheegerColding3} by Cheeger-Colding gives
\begin{equation}\label{eq:spectral cc}
\lim_{i \to \infty}\lambda_k(X_i)=\lambda_k(X)
\end{equation}
for any $k$, where $\lambda_k$ denotes the $k$-th eigenvalue of Laplacian.
Moreover, any sequence of $k$-th eigenfunctions on $M_i$ with a uniform bound on $L^2$-norms has a 
uniform convergent subsequence to a $k$-th eigenfunction on $X$. 

This result was conjectured by Fukaya in \cite{Fukaya}, who introduced the notion of mGH-convergence and proved the same conlusion under bounded sectional curvature. This spectral convergence result has more recently been generalized to metric measure spaces with (Riemannian) Ricci curvature bounded from below, the so called $\RCD(K, \infty)/\RCD^*(K, N)$-metric measure spaces, by Gigli-Mondino-Savar\'e in \cite{GigliMondinoSavare13}. 
See Section 2 for a quick introduction to this class of metric measure spaces.

Next we discuss a noncompact case, i.e. a pointed mGH-convergent sequence of $n$-dimensional Riemannian manifolds $N_i$;
\begin{equation}\label{eq:noncompact}
\left(N_i, p_i, \frac{\mathrm{vol}}{\mathrm{vol}\,B_1(p_i)}\right) \stackrel{mGH}{\to} (Y, \dist, p, \nu)
\end{equation}
with $\mathrm{Ric}_{N_i} \ge K>-\infty$.

In several papers (e.g. \cite{Ding}, \cite{KuwaeShioya03}, \cite{Xu}, \cite{ZhangZhu16}), the local spectral convergence is investigated, i.e.
\begin{equation}\label{eq:local spectral convergence}
\lim_{i \to \infty}\lambda^D_k(B_R(p_i))=\lambda_k^D(B_R(p))
\end{equation}
for any $R>0$, together with the convergence of the corresponding eigenfunctions,
where $\lambda_k^D$ denotes the $k$-th eigenvalue of Laplacian associated with the Dirichlet problem on the open ball $B_R$. 
To be precise, we recall that for any $f\in H^{1, 2}_0(B_R(p), \dist, \meas)$ (the Sobolev closure of Lipschitz
 functions with compact support in $B_R(p)$), 
we say that $f$ is an eigenfunction with the eigenvalue $\lambda$ for the Dirichlet problem on $B_R(p)$ if  
\begin{equation}\label{eq:9}
\int_{B_R(p)}\Gamma(f,g) \dist \meas = \lambda \int_{B_R(p)}fg\dist \meas
\qquad\forall g \in H^{1, 2}_0(B_R(p), \dist, \meas),
\end{equation}
where $\Gamma$ is the {\it carr\'e du champ} operator associated to the metric measure structure. In
\cite{Ding} the question whether the limit function of $k$-eigenvalues still satisfies the Dirichlet boundary condition
is raised, but the main result of that paper, namely the convergence of the heat kernels, is proved independently of
an answer to this question. In \cite{KuwaeShioya03}, the spectral convergence as well as the Mosco
convergence of the local Cheeger energies with Dirichlet boundary conditions are claimed. Finally, the stability
of the Dirichlet boundary condition seems to play a role in Proposition~7.5 of \cite{Xu}.

One of the main purposes of the paper is to give an example such that (\ref{eq:local spectral convergence}) is not satisfied
in general,
providing at the same time positive results and, in particular,  convergence results for generic balls.

\begin{example}\label{ex:1}
We consider a trivial pointed mGH convergent sequence; 
$$
([0, +\infty), \dist_{eucl}, s, \mathcal{L}^1) \stackrel{mGH}{\to} ([0, +\infty), \dist_{eucl}, \pi/4, \mathcal{L}^1)
$$
as $s \uparrow \pi/4$.
It is easy to check that each $([0, +\infty), \dist_{eucl}, s, \mathcal{L}^1)$ is the pointed mGH limit spaces of a sequence of Riemannian metrics on $\mathbb{R}^2$ with nonnegative sectional curvature. In particular this sequence consists of $\RCD^*(0, 2)$-spaces.

For any $\epsilon \in (0, 1)$ let 
$$
f_{\epsilon}(t):=\cos \left( \frac{\pi t}{\pi-2\epsilon}\right).
$$
Since $f_{\epsilon}$ is smooth with $f_{\epsilon}(\pi/2-\epsilon)=0$, we have
$$
f_{\epsilon}|_{[0, \pi/2-\epsilon)} \in H^1_0(B_{\pi/4}(\pi/4-\epsilon), \dist_{eucl}, \mathcal{L}^1)
\left(=H^1_0([0, \pi/2-\epsilon), \dist_{eucl}, \mathcal{L}^1)\right).
$$
Moreover, for any $g \in C^{\infty}([0, \pi/2-\epsilon])$ with $g(\pi/2-\epsilon)=0$, integration by parts with $f_{\epsilon}'(0)=0$ yield
\begin{align*}
\int_0^{\pi/2-\epsilon}f_{\epsilon}'(t) g'(t)\dist t=\left[ f_{\epsilon}'g\right]_{0}^{\pi/2-\epsilon}-
\int_0^{\pi/2-\epsilon}f_{\epsilon}''(t) g(t) \dist t= -\int_0^{\pi/2-\epsilon}f_{\epsilon}''(t) g(t) \dist t,
\end{align*}
which shows that $f_{\epsilon}$ is an eigenfunction associated with the Dirichlet problem on the ball
$B_{\pi/4}(\pi/4-\epsilon)$, which coincides with $[0, \pi/2-\epsilon)$.
Note that the eigenvalue of $f_{\epsilon}$ is equal to $\pi^2/(\pi-2\epsilon)^2$, so that
$$
\lambda_1^D(B_{\pi/4}(\pi/4-\epsilon)) \le \pi^2/(\pi-2\epsilon)^2.
$$
On the other hand, it is well-known that $\lambda_1^D(B_{\pi/4}(\pi/4))=\lambda_1^D((0, \pi/2))=4$.
Thus 
\begin{equation}\label{eq:2}
\lim_{\epsilon \to 0^+}\lambda_1^D(B_{\pi/4}(\pi/4-\epsilon)) =1<4= \lambda_1^D(B_{\pi/4}(\pi/4)).
\end{equation}
\end{example}

Note that this is not yet a complete counterexample for the validity of (\ref{eq:local spectral convergence}), because the
approximating sequence is not smooth, but we will see later on how a diagonal argument can be used to complete
the counterexample.

The reason for the nonvalidity of (\ref{eq:local spectral convergence}) is that, in general, the limit $f$ of eigenfunctions $f_i \in H^{1, 2}_0(B_R(p_i), \dist_i, \frac{\mathrm{vol}}{\mathrm{vol}\,B_1(p_i)})$ is not in $H^{1, 2}_0(B_R(p), \dist, \nu)$ (but the identity (\ref{eq:9}) holds, as 
$f \in H^{1, 2}(B_R(p), \dist, \nu)$ and $g \in H^{1, 2}_0(B_R(p), \dist, \nu)$).
Indeed, in the case of Example~\ref{ex:1}, the limit $f(t)=\cos t$ of $f_{\epsilon}$ as $\epsilon \downarrow 0$ is not in 
the space $H^{1, 2}_0((0, \pi/2),\dist_{eucl}, \mathcal{L}^1)$.

The main result of the paper is to give a necessary and sufficient condition for the validity of the local spectral convergence on $\RCD^*$-spaces, where the meaning of local spectral convergence is taken in a little bit strong sense, with converging radii and centers 
(Definition \ref{def:local mosco}, Proposition~\ref{prop:equiv}). The condition can be stated as follows 
(Theorem~\ref{thm:equiv} with Lemma~\ref{lem:ch}):  for a $\RCD^*(K, N)$-space $(X, \dist, \meas)$ and $B_R(x)\subset X$,
\begin{equation}\label{eq:10}
H^{1, 2}_0(B_R(x), \dist, \meas )=\bigcap_{\epsilon>0}H^{1, 2}_0(B_{R+\epsilon}(x), \dist, \meas )
\end{equation}
if and only if the local spectral convergence on $B_R(x)$ holds for some/all mGH-convergent sequence of $\RCD^*(\hat{K}, \hat{N})$-spaces to $(X, \dist, \meas)$. We also prove in Lemma~\ref{lem:ch} that, for a given center $x$, \eqref{eq:10} holds with at most countably many exceptions.

In particular, in Example \ref{ex:1}, since 
$$
H^{1, 2}_0(B_{\pi/4}(s), \dist_{eucl}, \mathcal{L}^1) = \bigcap_{\epsilon>0}H^{1, 2}_0(B_{\pi/4+\epsilon}(s), \dist_{eucl}, \mathcal{L}^1)
$$
for any $s \in [0, \pi/4)$, by (\ref{eq:2}) and a diagonal argument we can find
a sequence of Riemannian metrics $g_i$ on $\mathbb{R}^2$ with nonnegative sectional curvature, $p_i \in \mathbb{R}^2$ and $k \in \mathbb{N}$ such that $$(\mathbb{R}^2, \dist_{g_i}, p_i, \frac{\mathrm{vol}}{\mathrm{vol}\, B_1(p_i)}) \stackrel{mGH}{\to} ([0, +\infty),  \dist_{eucl}, \pi/4, \frac{\mathcal{L}^1}{\mathcal{L}^1(B_1(p)))})$$
and $\lim_i\lambda_k^D(B_{\pi/4}(p_i)) \neq \lambda_k^D(B_{\pi/4}(\pi/4))$, which gives a 
full counterexample to the validity of (\ref{eq:local spectral convergence}).

Next we describe an application of our local spectral convergence result to the study of harmonic functions on $\RCD^*(K, N)$-spaces.
For that let us recall the following result ($\star$) of Petrunin given in \cite{Petrunin2} ;
\begin{enumerate}
\item[($\star$)] Let $(A_i^n, \dist_i, a_i)$ be a pointed Gromov-Hausdorff convergent sequence 
of $n$-dimensional Alexandrov spaces to a pointed noncollapsed 
Alexandrov space $(A^n, \dist, a)$ with a uniform lower bound on sectional curvature (then it is known that $(A_i^n, a_i, \dist_i, \mathcal{H}^n) \stackrel{mGH}{\to} (A^n, a, \dist, \mathcal{H}^n)$), let $f_i$ be harmonic functions on $B_R(a_i)$ 
(i.e. $f_i \in H^{1, 2}(B_R(a_i), \dist_i, \mathcal{H}^n)$ and (\ref{eq:9}) is satisfied with $\lambda=0$), 
and let $f$ be the locally uniform limit function of $f_i$ on $B_R(a)$. 
Then $f$ is harmonic on $B_r(a)$ for all $r\in (0,R)$ and
$$
\lim_{i \to \infty}\int_{B_r(a_i)}\Gamma_i(f_i)\dist \mathcal{H}^n \to \int_{B_r(a)}\Gamma(f)\dist \mathcal{H}^n
\qquad\forall r\in (0,R).
$$
\end{enumerate}
Moreover, in the same paper (see question 2.3 therein), Petrunin raised the following question: 
what happens in the result ($\star$) when the sequence of spaces is collapsed?

We can give a positive answer to the question in the $\RCD^*$-setting; $(X_i, \dist_i, x_i, \meas_i) \stackrel{mGH}{\to} (X, \dist, x, \meas)$ (see \cite{Honda2} for the corresponding results in the Ricci limit setting).
Then, combining this fact with the compatibility between Alexandrov and $\RCD$-spaces given in \cite{Petrunin, ZhangZhu}, we provide
an affirmative answer, as a particular case, in the Alexandrov setting (Theorem~\ref{thm:stability lap} and Corollary~\ref{cor:stab harm}). Moreover we will prove a kind
of  converse \textit{harmonic approximation} property: for any harmonic function $g$ on $B_R(x)$ and any $r \in (0, R)$, after passing to a subsequence, $g\vert_{B_r(x)}$ can be approximated by harmonic functions $g_i$ on $B_r(x_i)$, which is new even in the noncompact Ricci limit setting (see \cite{Honda3} for the corresponding results in the compact Ricci limit setting). 
Furthermore, we provide an example showing that the assumption, ``$r \in (0, R)$'', is needed (Remark~\ref{rem:example}).
Thus, these results provide a fairly complete picture of the stability and approximability of harmonic functions with respect to 
the mGH-convergence of $\RCD^*(K, N)$-spaces.

Let us introduce a key notion to prove the harmonic approximation, which is the \textit{harmonic replacement $\hat{f}$} of a function $f \in H^{1, 2}(B_R(x), \dist, \meas)$, i.e. $\hat{f}$ is uniquely determined by satisfying that $\hat{f}$ is a harmonic function on $B_R(x)$ with $f-\hat{f} \in H^{1, 2}_0(B_R(x), \dist, \meas)$ if $\lambda_1^D(B_R(x))>0$.
It is worth pointing out that this replacement naturally appeared in proofs of splitting theorems \cite{CheegerGromoll, CheegerColding} (see also Remark \ref{rem:excess}).
Our local spectral convergence shows; under mild assumptions, if $f_i$ converge strongly to $f$ on $B_R(x)$ in $H^{1, 2}$, then the harmonic replacements $\hat{f}_i$ is also $H^{1, 2}$-strongly convergent to $\hat{f}$ (Theorem \ref{cor:harm harm}), which plays a key role in the proof of the harmonic approximation

Finally, let us mention that these observations will be justified in more general setting of Poisson's equation $\Delta f=g$. This
provides new estimates for distance functions independent of almost nonnegative curvature assumptions 
(Remark~\ref{rem:excess} and Theorem~\ref{thm:metric cone distance}), which seem to be new even in the Ricci limit setting.

The paper is organized as follows. In Section~\ref{sec:1} we introduce the basic terminology and properties of $\RCD$-spaces,
passing then to the description of the basic stability results from \cite{GigliMondinoSavare13} and \cite{AmbrosioHonda}.
Then, for $U\subset X$ open, we introduce the local Sobolev spaces $H^{1,2}_0(U,\dist,\meas)$, $H^{1,2}(U,\dist,\meas)$ and the
related Laplacian operators. In Section~\ref{sec:2} we provide all basic stability results for problems with homogeneous
Dirichlet conditions, identifying a necessary and sufficient condition for the convergence. The final section of the paper
deals with elliptic problems in $H^{1,2}(B_R(x),\dist,\meas)$-Sobolev spaces, with possibly nonhomogeneous 
Dirichlet boundary conditions, and provides stability results also in this setting.

\smallskip
{\bf Acknowledgement.} The first author acknowledges the support of the MIUR PRIN2015 project ``Calculus of Variations''. 
The second author acknowledges the support of the JSPS Program for Advancing 
Strategic International Networks to Accelerate the Circulation of Talented Researchers, the
Grant-in-Aid for Young Scientists (B) 16K17585 and the warm hospitality of SNS.

\section{Notation and preliminary results}\label{sec:1}

We use the notation $B_r(x)$ for open balls and $\overline{B}_r(x)$ for $\{y:\ \dist(x,y)\leq r\}$.
We also use the standard notation $\mathrm{LIP}(X,\dist)$, $\mathrm{LIP}_b(X, \dist)$, $\mathrm{LIP}_c(X,\dist)$ for the
spaces of Lipschitz, bounded Lipschitz, compactly supported Lipschitz functions, respectively. 

The following elementary lemma will play a role in the proof of Lemma~\ref{lem:ch}, about the generic
coincidence of two classes of Sobolev spaces.

\begin{lemma} \label{lem:ulam} Let $(Z,\tau)$ be a topological space with a countable basis of open sets
and let $f_R,\,g_R$, $R>0$, be upper semicontinuous
functions from $Z$ to $\mathbb{R}$ satisfying $f_R\leq g_R\leq f_{R+\epsilon}$ for all $R,\,\epsilon>0$. 
Then $f_R\equiv g_R$ in $X$ for all $R>0$, with at most countably many exceptions.
\end{lemma}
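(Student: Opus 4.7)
The plan is to exploit the countable basis to reduce a potentially uncountable family of pointwise coincidences to countably many monotone real-valued functions of $R$, and then to use the standard fact that a monotone function on the real line has at most countably many discontinuities.

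First I would observe that the chain $f_R\le g_R\le f_{R+\epsilon}$ forces both $R\mapsto f_R(z)$ and $R\mapsto g_R(z)$ to be non-decreasing, for every $z\in Z$. Now fix a countable basis $\{U_n\}_{n\in\setN}$ of $\tau$ and define
\begin{equation*}
\phi_n(R):=\sup_{z\in U_n}f_R(z),\qquad \psi_n(R):=\sup_{z\in U_n}g_R(z),\qquad R>0.
\end{equation*}
Taking $\sup_{U_n}$ in the inequalities $f_R\le g_R\le f_{R+\epsilon}$ gives
\begin{equation*}
\phi_n(R)\le\psi_n(R)\le\phi_n(R+\epsilon)\qquad\forall R,\epsilon>0,
\end{equation*}
so each $\phi_n$ is in particular non-decreasing, hence admits a right limit $\phi_n(R^+)$ for every $R$ and is right-continuous outside a countable set $D_n\subset(0,\infty)$.

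Let $D:=\bigcup_n D_n$, still countable. I claim that for every $R\notin D$ one has $f_R\equiv g_R$ on $Z$. Indeed, $f_R\le g_R$ already holds, so suppose there were a point $z_0\in Z$ with $f_R(z_0)<g_R(z_0)$, and pick $c\in\setR$ strictly between these two values. By upper semicontinuity of $f_R$, the set $V:=\{z:f_R(z)<c\}$ is an open neighbourhood of $z_0$, so the basis provides some $U_n\subset V$ with $z_0\in U_n$. Then
\begin{equation*}
\phi_n(R)=\sup_{z\in U_n}f_R(z)\le c<g_R(z_0)\le\psi_n(R)\le\phi_n(R+\epsilon)\qquad\forall\epsilon>0,
\end{equation*}
which forces $\phi_n(R^+)\ge\psi_n(R)>\phi_n(R)$, i.e.\ $R\in D_n\subset D$, a contradiction.

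The only delicate point is the direction in which upper semicontinuity is applied: upper semicontinuity gives openness of sublevels $\{f_R<c\}$, not of superlevels, so one must test the inequality $\phi_n<\psi_n$ using a basic open set on which $f_R$ is controlled from above (rather than a set on which $g_R$ is controlled from below). Everything else is routine once $\phi_n,\psi_n$ are in place; the use of a countable basis is exactly what replaces the uncountable collection of potential failure points $z$ by a countable set of monotone auxiliary functions.
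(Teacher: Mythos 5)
Your proof is correct and follows essentially the same route as the paper: both arguments form the auxiliary monotone functions $\sup_{A_k} f_R$, $\sup_{A_k} g_R$ over a countable basis, use the squeeze $\sup_{A_k} f_R \le \sup_{A_k} g_R \le \sup_{A_k} f_{R+\epsilon}$ to confine disagreements to the (countable) discontinuity set of a monotone function of $R$, and then invoke upper semicontinuity of $f_R$, $g_R$ to pass from the basis suprema back to pointwise values. The only cosmetic difference is that the paper cites the infimum representation $f_R(u)=\inf_{A_k\ni u}\sup_{A_k}f_R$ directly, whereas you run the equivalent contradiction argument with a separating value $c$ and the open sublevel set $\{f_R<c\}$.
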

\begin{proof} For any open set $A$, the functions 
$\tilde{f}(R):=\sup_A f_R$,  $\tilde{g}(R):=\sup_A g_R$
satisfy $\tilde{f}(R)\leq\tilde{g}(R)\leq\tilde{f}(R+\epsilon)$ for all $\epsilon>0$, 
hence the set $\{\tilde{f}\neq\tilde{g}\}$ is at most countable in $(0,+\infty)$. 
Choosing a countable basis $\{A_k\}$ of open sets of $(Z,\tau)$, since upper semicontinuity gives
$$
f_R(u)=\inf_{A_k\ni u} \sup_{A_k} f_R,\qquad
g_R(u)=\inf_{A_k\ni u} \sup_{A_k} g_R,
$$
the result follows.
\end{proof}

Let us now recall basic facts about Sobolev spaces and heat flow in metric measure spaces 
$(X,\dist,\meas)$, see \cite{AmbrosioGigliSavare13} and \cite{Gigli1} for a more systematic treatment of this topic. 
We shall always assume that the metric space $(X,\dist)$ is complete and separable. 

\begin{definition}\label{def:Chee}
The Cheeger energy
$\Ch=\Ch_{\dist,\meas}:L^2(X,\meas)\to [0,+\infty]$ is a convex and $L^2(X,\meas)$-lower semicontinuous functional defined as follows:
\begin{equation}\label{eq:defchp}
\Ch(f):=\inf\left\{\liminf_{n\to\infty}\frac 12\int_X|\nabla  f_n|^2\di\meas:\ \text{$f_n\in\Lipb(X,\dist)\cap L^2(X,\meas)$, $\|f_n-f\|_{L^2}\to 0$}\right\}, 
\end{equation}
where $|\nabla f|$ is the so-called slope, or local Lipschitz constant.

The Sobolev space $H^{1,2}(X,\dist,\meas)$ then concides with $\{f:\ \Ch(f)<+\infty\}$. 
\end{definition}

When endowed with the norm
$$
\|f\|_{H^{1,2}}:=\left(\|f\|_{L^2(X,\meas)}^2+2\Ch(f)\right)^{1/2}
$$
the space $H^{1,2}(X,\dist,\meas)$ 
is Banach, reflexive if $(X,\dist)$ is doubling (see \cite{AmbrosioColomboDiMarino}), and  
separable Hilbert if $\Ch$ is a quadratic form (see \cite{AmbrosioGigliSavare14}). 
According to the terminology introduced in \cite{Gigli1}, we say that a 
metric measure space $(X,\dist,\meas)$ is infinitesimally Hilbertian if $\Ch$ is a quadratic form.
  
By looking at minimal relaxed slopes and by a polarization procedure, one can then define a {\it carr\'e du champ}
$$
\Gamma:H^{1,2}(X,\dist,\meas)\times H^{1,2}(X,\dist,\meas)\rightarrow L^1(X,\meas)
$$
playing in this abstract theory the role of the scalar product between gradients (more precisely,
the duality between differentials and gradients, see \cite{Gigli1}). In infinitesimally Hilbertian metric measure
spaces the $\Gamma$ operator
satisfies all natural symmetry, bilinearity, locality and chain rule properties, and provides integral representation to
$\Ch$: $2\Ch(f)=\int_X \Gamma(f,f)\,\dist\meas$ for all $f\in H^{1,2}(X,\dist,\meas)$. We also adopt the usual
abbreviation
$$
\Gamma(f):=\Gamma(f,f).
$$

We can now define a densely
defined operator $\Delta:D(\Delta)\to L^2(X,\meas)$ whose domain consists of all functions $f\in H^{1,2}(X,\dist,\meas)$
satisfying
$$
 \int_X hg\dist\meas=-\int_X \Gamma(f,h)\dist\meas\quad\qquad\forall h\in H^{1,2}(X,\dist,\meas)
$$
for some $g\in L^2(X,\meas)$. The unique $g$ with this property is then denoted $\Delta f$.

Another object canonically associated to the metric measure structure, more specifically to $\Ch$, is the heat flow $h_t$, defined as the
$L^2(X,\meas)$ gradient flow of $\Ch$; even in general metric measure structures one can use the Brezis-Komura theory of 
gradient flows of lower semicontinuous functionals in Hilbert spaces to provide existence and uniqueness of this gradient flow. 
In the special case of infinitesimally Hilbertian 
metric measure spaces, this provides a linear, continuous and self-adjoint contraction
semigroup $h_t$ in $L^2(X,\meas)$, with the Markov property, characterized by:
$t\mapsto h_t f$ is locally absolutely continuous in $(0,+\infty)$ with values in $L^2(X,\meas)$ and
$$
\frac{d}{dt}h_t f=\Delta h_t f\quad\text{for $\leb^1$-a.e. $t\in (0,+\infty)$, for all $f\in L^2(X,\meas)$.}
$$

%

In order to introduce the class of $\RCD(K,\infty)$ and $\RCD^*(K,N)$ metric measure spaces we follow the
$\Gamma$-calculus point of view, based on Bochner's inequality, because this is the point of view more relevant
in our proofs. However, the equivalence with the Lagrangian point
of view, based on the theory of optimal transport first proved in \cite{AmbrosioGigliSavare15} (in the case $N=\infty$) and then in
\cite{ErbarKuwadaSturm}, \cite{AmbrosioMondinoSavare} (in the case $N<\infty$), 
plays indeed a key role in the proof of the results we need, mainly taken from \cite{GigliMondinoSavare13} 
and \cite{AmbrosioHonda}. 

\begin{definition} [$\RCD$ spaces]\label{def:RCDspaces} Let $(X,\dist,\meas)$ be a metric measure space, with $(X,\dist)$ complete, length,
satisfying
\begin{equation}\label{eq:Grygorian}
\meas\bigl(B_r(\bar x)\bigr)\leq c_1 e^{c_2r^2}\qquad\forall r>0
\end{equation}
for some $c_1,\,c_2>0$ and $\bar x\in X$ and the so-called Sobolev to Lipschitz property:  any  
$f\in H^{1,2}(X,\dist,\meas)\cap L^\infty(X,\meas)$ with $\Gamma(f)\leq 1$ $\meas$-a.e. in $X$ 
has a representative in $\tilde{f}\in\Lipb(X,\dist)$, with $\Lip(\tilde f)\leq 1$.

For $K\in\setR$, 
we say that $(X,\dist,\meas)$ is a $\RCD(K,\infty)$ metric measure space if, 
for all $f\in H^{1,2}(X,\dist,\meas)\cap D(\Delta)$ with $\Delta f\in H^{1,2}(X,\dist,\meas)$,
 Bochner's inequality
$$
\frac 12\Delta\Gamma(f)\geq \Gamma(f,\Delta f)+K\Gamma(f) 
$$
holds in the weak form
$$
\frac 12\int \Gamma(f)\Delta\phi\dist\meas\geq
\int\phi(\Gamma(f,\Delta f)+K\Gamma(f))\dist\meas 
\quad\forall \phi\in D(\Delta)\,\,\text{with $\phi\geq 0$,}\,\,\Delta\phi\in L^\infty(X,\meas).
$$

Analogously, for $K\in\setR$ and $N>0$, we say that $(X,\dist,\meas)$ is a $\RCD^*(K,N)$ metric 
measure space if, for all $f\in H^{1,2}(X,\dist,\meas)\cap D(\Delta)$ with $\Delta f\in H^{1,2}(X,\dist,\meas)$, 
Bochner's inequality
$$
\frac 12\Delta\Gamma(f)\geq \Gamma(f,\Delta f)+\frac 1N(\Delta f)^2+K\Gamma(f) 
$$
holds in the weak form
$$
\frac 12\int \Gamma(f)\Delta\phi\dist\meas\geq
\int\phi(\Gamma(f,\Delta f)+\frac 1N(\Delta f)^2+K\Gamma(f))\dist\meas 
$$
for all $\phi\in D(\Delta)$ with $\phi\geq 0$ and $\Delta\phi\in L^\infty(X,\meas)$.
\end{definition}

Since we are going to adopt the so-called extrinsic viewpoint in mGH convergence,  it will be convenient for us
not to add the assumption that $X=\supp\meas$, made in some other papers on this subject.
However, it is obvious that $(X,\dist,\meas)$ is $\RCD(K,\infty)$ (resp. $\RCD^*(K,N)$) if and only if
$(X,\dist,\supp\meas)$ is $\RCD(K,\infty)$ (resp. $\RCD^*(K,N)$).

Finally recall the existence of good cut-off functions which will play a key role in the next sections: 
for any $x \in X$ and all $0<r_1<R_1<+\infty$,  there exist $\phi \in\mathcal{D}(\Delta)$ such that
\begin{equation}\label{eq:good cf}
 0 \le \phi \le 1, \quad\phi|_{B_{r_1}(x)}\equiv 1, \quad\supp \phi \Subset B_{R_1}(x), \quad \Gamma (\phi ) + |\Delta \phi| \le C(K, N, r_1, R_1).
\end{equation}
See \cite[Lemma 3.1]{MondinoNaber} for the proof.
Note that this is a generalization of \cite[Theorem 6.33]{CheegerColding} from the smooth setting to the $\RCD$-setting.

\subsection{mGH convergence and global stability results}

From now on, $K\in\mathbb{R}$ and $N \in [1,+\infty)$. Let us fix a pointed measured Gromov-Hausdorff (mGH for short in the sequel) convergent sequence $(X_i, \dist_i, x_i, \meas_i) \stackrel{mGH}{\to} (X, \dist, x, \meas)$ of $\RCD^*(K, N)$-spaces. We adopt throughout the
paper the so-called
extrinsic approach, assuming that $X_i=\supp\meas_i$, $X=\supp\meas$ and that all the sets $X_i$, as well as $X$, are contained in a common
proper metric space $(Y,\dist)$, with $\dist_i\vert_{X_i\times X_i}=\dist$ and $x_i\to x$. 

Notice also that
the extrinsic approach is convenient to formulate various notions of convergence and to avoid the use of
$\epsilon$-isometries. However, it should be handled with care: for instance, if $f\in\mathrm{LIP}_b(Y,\dist)$ and we view
this as a sequence of bounded Lipschitz functions in the spaces $X_i$, then the sequence need not be strongly
convergent in $H^{1,2}$ (see \cite{AmbrosioStraTrevisan} for an example).

Notice also that the properness assumption on
$(Y,\dist)$ is justified by the uniform doubling property granted by the Bishop-Gromov inequality. Unlike $X_i$, the ambient space
$(Y,\dist)$ will not appear often in our notation, since the measures $\meas_i$ are concentrated on $X_i$; however $Y$
plays an important role to define weak convergence of functions $f_i\in L^p(X_i,\meas_i)$, since the test functions 
are continuous and compactly supported in the ambient space. Occasionally, when we want to emphasize the role of $Y$ (as in
the proof of Theorem~\ref{thm:compact loc sob}) we adopt the superscript $Y$. Notice also that any continuous (compactly
supported) function $\varphi:B_R^{X_i}(x)\to\mathbb R$ can be thought as the restriction of a continuous (compactly supported)
function $\tilde\varphi:B_R^Y(x)\to\mathbb R$.

In this setting, let us recall the definition of $L^2$-strong/weak convergence of functions with respect to the mGH-convergence.
The following formulation is due to \cite{GigliMondinoSavare13} and \cite{AmbrosioStraTrevisan}, which fits the pmG-convergence well.
Other good formulations of $L^2$-convergence, in connection with mGH-convergence, can be found in \cite{Honda2, KuwaeShioya03}.
However, in our setting these formulations are equivalent by the volume doubling condition (e.g. \cite[Proposition 3.3]{Honda7}). 

\begin{definition}[$L^2$-convergence of functions with respect to variable measures]\label{def:l2}
We say that $f_i \in L^2(X_i, \meas_i)$ \textit{$L^2$-weakly converge to $f \in L^2(X, \meas )$} 
if $\sup_i\|f_i\|_{L^2}<\infty$ and $f_i\meas_i \stackrel{C_c(Y)}{\rightharpoonup} f\meas$.
Moreover, we say that $f_i\in L^2(X_i,\meas_i)$ \textit{$L^2$-strongly converge to $f\in L^2(X,\meas)$} if 
$f_i$ $L^2$-weakly converge to $f$ with $\limsup_i\|f_i\|_{L^2}\le \|f\|_{L^2}$. 
\end{definition}

For nonnegative functions $f_i\in L^1(X_i,\meas_i)$, we also say that $f_i$ 
\textit{$L^1$-strongly converge to $f\in L^1(X,\meas)$} if $\sqrt{f_i}$ $L^2$ strongly
converge to $\sqrt{f}$.

Note that it was proven in \cite{GigliMondinoSavare13} (see also \cite{AmbrosioStraTrevisan}, \cite{AmbrosioHonda}) 
that any $L^2$-bounded sequence has an $L^2$-weak convergent subsequence in the sense above. In the sequel
we shall denote by $\Ch^i=\Ch_{\meas_i}$, $\Gamma_i$, $\Delta_i$, etc. the various objects associated to the $i$-th
metric measure structure.

The following is a consequence of the main results of \cite{GigliMondinoSavare13}, see also \cite{AmbrosioHonda}.

\begin{theorem}[Mosco convergence]\label{thm:Mosco}
The Cheeger energies $\Ch^i$ Mosco converge to $\Ch$, i.e. the 
following conditions hold:
\begin{itemize}
\item[(a)] (\emph{Weak-$\liminf$}). For every $f_i\in L^2(X_i,\meas_i)$ $L^2$-weakly converging to $f\in L^2(X,\meas)$, one has
\[ \Ch(f)\le \liminf_{i\to\infty} \Ch^i(f_i).\]
Moreover if $\sup_i\Ch^i(f_i)<\infty$, then $f_i$ $L^2$-strongly converge to $f$.
\item[(b)] (\emph{Strong-$\limsup$}). For every $f \in L^2(X,\meas)$ there exist $f_i\in L^2(X_i,\meas_i)$, $L^2$-strongly converging to $f$ with
\begin{equation}\label{eq:optimal_Chee} \Ch(f)=\lim_{i\to \infty} \Ch^i(f_i).\end{equation}
Moreover, if $f \in \mathrm{LIP}(X, \dist) \cap H^{1, 2}(X, \dist, \meas)$, then we can find $f_i$ in such a way that
$f_i \in \mathrm{LIP}(X_i, \dist_i) \cap H^{1, 2}(X_i, \dist_i, \meas_i)$ with $\sup_i\|\Gamma_i(f_i)\|_{L^{\infty}}<\infty$.
\end{itemize}
\end{theorem}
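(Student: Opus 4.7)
My plan is to deduce this Mosco-convergence statement from the results of \cite{GigliMondinoSavare13} (in the $\RCD(K,\infty)$ setting), extended to the pointed non-compact $\RCD^*(K,N)$ setting with variable ambient space in \cite{AmbrosioHonda}, and to single out the extra step needed for the Lipschitz refinement at the end of (b).

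For the weak-$\liminf$ in (a), the key input is the semigroup identity
$$
2\Ch(f) = \lim_{t\downarrow 0}\frac{1}{t}\int_X (f - h_t f)\,f\,\di\meas,
$$
whose finite-$t$ version is a lower bound on $2\Ch(f)$ at each $t>0$. Combined with the $L^2$-convergence of the heat semigroups $h^i_t\to h_t$ along mGH-convergent sequences, established in the cited works via the identification of the heat flow with the Wasserstein gradient flow of the entropy, the inequality passes to the $\liminf$ at fixed $t$, and $t\downarrow 0$ concludes. The strong-$L^2$ upgrade when $\sup_i\Ch^i(f_i)<\infty$ combines a Rellich-type compactness on balls (via Bishop-Gromov doubling, local Poincar\'e, and the good cut-offs \eqref{eq:good cf}) with tightness at infinity supplied by the growth bound \eqref{eq:Grygorian}.

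For the strong-$\limsup$ in (b), I would construct the recovery sequence by heat regularization: approximate $h_{t_n}f$ by $\phi_n\in C_c(Y)\cap L^2$ (whose restriction to $X_i$ is a trivial $L^2$-strong recovery sequence via $\meas_i\weakto\meas$ on $Y$), and set $f^n_i := h^i_{t_n}(\phi_n|_{X_i})$. Contractivity of the semigroups and convergence of the heat flows give $f^n_i\to h_{2t_n}f$ in $L^2$-strong sense and $\Ch^i(f^n_i)\to\Ch(h_{2t_n}f)$; since $h_{2t_n}f\to f$ in $H^{1,2}$ as $t_n\downarrow 0$, a diagonal extraction produces the desired $f_i$. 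For the Lipschitz refinement, given $f\in\Lip(X,\dist)\cap H^{1,2}$, I would start from a McShane extension $\tilde f\in\Lip(Y,\dist)$ with $\Lip(\tilde f)=\Lip(f)$, restrict to $g_i:=\tilde f|_{X_i}\in\Lip(X_i)$, and smooth via $f_i:=h^i_{t_i}g_i$ for a suitable $t_i\downarrow 0$. The Bakry-\'Emery gradient estimate $|\nabla h^i_t g|\le e^{-Kt}h^i_t|\nabla g|$ available in $\RCD^*(K,N)$ gives $\|\Gamma_i(f_i)\|_{L^\infty}\le e^{-2Kt_i}\Lip(f)^2$, uniformly bounded for bounded $t_i$, while the energy and $L^2$ convergences are obtained exactly as in the general case above.

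The hardest step I expect is the strong-convergence upgrade in (a): ruling out $L^2$ mass escape at infinity along the mGH-convergent sequence, which requires a careful combination of \eqref{eq:good cf} with \eqref{eq:Grygorian}, possibly together with a direct tightness estimate coming from heat-kernel upper bounds. Once tightness is in hand, the Rellich compactness on balls assembles into global $L^2$-strong convergence, and the rest of the Mosco package follows by the strategy above.
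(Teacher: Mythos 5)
The paper does not prove this theorem: the text immediately preceding it reads ``The following is a consequence of the main results of \cite{GigliMondinoSavare13}, see also \cite{AmbrosioHonda}'', and no argument is given. So there is no in-paper proof against which to compare yours; the relevant comparison is with the strategy of those references, which your sketch does broadly reconstruct (heat-flow stability via the entropy/EVI gradient flow identification, regularization to build recovery sequences, Bakry--\'Emery to control Lipschitz constants).

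A few concrete points in the sketch need repair, though. For the liminf at fixed $t$, the quantity $\frac{1}{2t}\int(f_i-h^i_tf_i)f_i\,\di\meas_i = \frac{1}{2t}\bigl(\|f_i\|_{L^2}^2-\|h^i_{t/2}f_i\|_{L^2}^2\bigr)$ has the right sign only if you can pass to the limit in the \emph{subtracted} term, i.e.\ you need $\|h^i_{t/2}f_i\|_{L^2}\to\|h_{t/2}f\|_{L^2}$, which amounts to $L^2$-\emph{strong} convergence of $h^i_{t/2}f_i$. Since $f_i\to f$ only $L^2$-weakly, the heat-flow stability results you invoke (stated for $L^2$-strongly convergent data) do not give this directly; you must first use the regularizing estimate $\Ch^i(h^i_{t/2}f_i)\le\|f_i\|^2_{L^2}/t$, then local Rellich compactness, then a duality argument (testing against $h^i_{t/2}\psi$ for $\psi\in C_c(Y)$) to identify the limit with $h_{t/2}f$. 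In other words, the liminf step is not logically independent of the strong-convergence upgrade you defer to the end: they must be run together, and the order of presentation in your sketch obscures this. Second, for the Lipschitz refinement, the McShane extension $\tilde f\in\Lip(Y)$ need not have the global $L^2(\meas_i)$-integrability required to even state $g_i\to f$ $L^2$-strongly in the noncompact setting (a globally Lipschitz function can fail to be $L^2$); one should first reduce to $f\in\Lipbs$ via good cut-offs \eqref{eq:good cf}, and only then extend and regularize. Finally, you rightly flag tightness at infinity as the hardest step for the strong-$L^2$ upgrade in (a); be aware that the growth bound \eqref{eq:Grygorian} alone does not prevent $L^2$-mass from escaping along translates (already on $\mathbb{R}$), so that step genuinely depends on the structure of the arguments in \cite{AmbrosioHonda} rather than on soft compactness considerations.
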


Next, we define in a natural way, following \cite{GigliMondinoSavare13}, weak and strong convergence in the Sobolev space $H^{1,2}$,
with a variable reference measure.

\begin{definition}[Convergence in the Sobolev spaces]
We say that $f_i\in H^{1,2}(X_i,\dist_i,\meas_i)$ are weakly convergent in $H^{1,2}$ to 
$f\in H^{1,2}(X_i,\dist_i,\meas_i)$ if $f_i$ are $L^2$-weakly convergent to $f$ 
and $\sup_i\Ch^i(f_i)$ is finite. Strong convergence
in $H^{1,2}$ is defined by requiring $L^2$-strong convergence of the functions, and that $\Ch(f)=\lim_i\Ch^i(f_i)$. 
\end{definition}

The following results are proved in \cite{AmbrosioHonda} (see Corollary~5.5, Theorem~5.7 and Lemma~5.8 therein), 
building on \cite{AmbrosioStraTrevisan};  the third result can be considered as the local counterpart
of  Theorem~\ref{thm:Mosco}.

\begin{theorem}[Weak stability of Laplacian]\label{thm:lap}
Let $f_i\in D(\Delta_i)$ with $$\sup_i (\|f_i\|_{L^2(X_i,\meas_i)}+\|\Delta_i f_i\|_{L^2(X_i,\meas_i)})<\infty$$
and assume that $f_i$ $L^2$-strongly converge to $f$. Then $f \in D(\Delta )$ and
\begin{itemize}
\item[(1)] $f_i$ strongly converge to $f$ in $H^{1, 2}$;
\item[(2)]   $\Delta_i f_i$ $L^2$-weakly converge to $\Delta f$.
\end{itemize}
\end{theorem}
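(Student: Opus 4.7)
The plan is to deduce this from Mosco convergence by the standard variational/subdifferential argument for quadratic forms, adapted to the variable-measure setting. First, note that integration by parts and Cauchy--Schwarz give
\[
2\Ch^i(f_i)=\int_{X_i}\Gamma_i(f_i)\dist\meas_i
=-\int_{X_i} f_i\,\Delta_i f_i\dist\meas_i
\leq \|f_i\|_{L^2}\,\|\Delta_i f_i\|_{L^2},
\]
so $\sup_i\Ch^i(f_i)<\infty$, and, by the definition of $L^2$-weak convergence and a standard compactness argument (available in the mGH framework), some subsequence of $\Delta_i f_i$ $L^2$-weakly converges to a limit $g\in L^2(X,\meas)$. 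The weak-$\liminf$ half of Theorem~\ref{thm:Mosco} then gives $f\in H^{1,2}(X,\dist,\meas)$ with $\Ch(f)\leq\liminf_i\Ch^i(f_i)$.

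The next step is to identify $g=\Delta f$. Since the spaces are infinitesimally Hilbertian, $\Ch^i$ is a quadratic form and the algebraic identity
\[
\Ch^i(u_i)-\Ch^i(f_i)=\int_{X_i}\Gamma_i(f_i,u_i-f_i)\dist\meas_i
+\tfrac12\int_{X_i}\Gamma_i(u_i-f_i)\dist\meas_i
\]
together with $f_i\in D(\Delta_i)$ yields the subgradient inequality
\[
\Ch^i(u_i)\geq \Ch^i(f_i)+\int_{X_i}(-\Delta_i f_i)(u_i-f_i)\dist\meas_i
\qquad\forall u_i\in L^2(X_i,\meas_i).
\]
Given $u\in H^{1,2}(X,\dist,\meas)$, the strong-$\limsup$ half of Theorem~\ref{thm:Mosco} provides a recovery sequence $u_i\to u$ $L^2$-strongly with $\Ch^i(u_i)\to\Ch(u)$. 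Since $u_i-f_i\to u-f$ strongly in $L^2$ while $-\Delta_i f_i\weakto -g$ weakly in $L^2$ (and one has the usual strong-weak duality for variable-measure $L^2$ pairings, valid in this setting), passing to the $\liminf$ in the subgradient inequality gives
\[
\Ch(u)\geq \Ch(f)+\int_X(-g)(u-f)\dist\meas
\qquad\forall u\in H^{1,2}(X,\dist,\meas).
\]
Because $\Ch$ is a quadratic form, this inequality identifies $-g$ as its unique subgradient at $f$, hence $f\in D(\Delta)$ with $\Delta f=g$. Uniqueness of the limit $g$ also promotes the subsequential convergence to full convergence, giving~(2).

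For~(1), apply the subgradient inequality on $X_i$ with $u_i:=\tilde f_i$, where $\tilde f_i\to f$ is a recovery sequence with $\Ch^i(\tilde f_i)\to\Ch(f)$. Since $\tilde f_i-f_i\to 0$ strongly in $L^2$ and $\{\Delta_i f_i\}$ is bounded in $L^2$, the cross term vanishes in the limit, and we obtain
\[
\Ch(f)=\lim_i \Ch^i(\tilde f_i)\geq \limsup_i \Ch^i(f_i).
\]
Combined with the weak-$\liminf$ bound this forces $\Ch^i(f_i)\to\Ch(f)$, which, together with the assumed $L^2$-strong convergence of $f_i$ to $f$, is exactly the definition of strong $H^{1,2}$ convergence.

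The main obstacle is the passage to the limit in the cross terms $\int_{X_i}(-\Delta_i f_i)(u_i-f_i)\dist\meas_i$: one must know that, in the extrinsic mGH framework, strong $L^2$-convergence against weakly $L^2$-convergent sequences produces convergence of the pairings, even though the reference measures $\meas_i$ are varying. Once this (standard in the framework of \cite{GigliMondinoSavare13,AmbrosioStraTrevisan}) is in hand, the whole argument is just Mosco convergence applied to the subdifferential characterization of $\Delta_i$.
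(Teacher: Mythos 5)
The paper does not prove Theorem~\ref{thm:lap} directly; it defers entirely to \cite{AmbrosioHonda} (Corollary~5.5 therein), so there is no in-text argument to compare against. Your self-contained proof is correct: the bound $2\Ch^i(f_i)\le\|f_i\|_{L^2}\|\Delta_i f_i\|_{L^2}$ is legitimate, the subdifferential inequality for the quadratic form $\Ch^i$ holds (and is trivially true when $\Ch^i(u_i)=+\infty$), the passage to the limit in the cross term is justified by the strong--weak duality for variable-measure $L^2$ pairings established in \cite{GigliMondinoSavare13, AmbrosioStraTrevisan}, and the final identification of $-g$ with the subgradient of $\Ch$ at $f$ correctly yields $f\in D(\Delta)$ with $\Delta f=g$. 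The argument for (1), testing the subgradient inequality with a recovery sequence $\tilde f_i$ for $f$ itself so the cross term tends to zero, is also valid and gives $\limsup_i\Ch^i(f_i)\le\Ch(f)$, which together with the weak-$\liminf$ bound forces $\Ch^i(f_i)\to\Ch(f)$. This is precisely the standard Mosco-convergence/EVI-subdifferential route, and it is the natural way to recover the result stated without the external reference.
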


\begin{theorem} [Continuity of the gradient operators] \label{thm:cont_reco} 
Let $v\in H^{1,2}(X,\dist,\meas)$ and let $v_i\in H^{1,2}(X_i,\dist_i,\meas_i)$
be strongly convergent in $H^{1,2}$ to $v$. Then:
\begin{itemize} 
\item[(i)] If $w_i$ weakly converge to $w$ in $H^{1,2}$ and $\Gamma_i(v_i,w_i)$ is uniformly bounded
in $L^p$ for some $p\in (1,\infty)$, then $\Gamma_i(v_i, w_i)$ $L^p$-weakly converge to $\Gamma(v,w)$.
\item[(ii)] If $w_i$ strongly converge to $w$ in $H^{1,2}$, then 
$\Gamma_i(v_i, w_i)$ $L^1$-strongly converge to $\Gamma(v,w)$.
\end{itemize} 
\end{theorem}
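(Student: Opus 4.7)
The plan is to prove part (ii) first, via polarization and Mosco convergence of Cheeger energies, and then to deduce part (i) through an integration-by-parts argument in which $v$ is regularized by the heat flow. Throughout, the bilinear identity $2\Gamma(u,z)=\Gamma(u+z)-\Gamma(u)-\Gamma(z)$ is exploited, together with the infinitesimal Hilbertianity built into the $\RCD^*(K,N)$-framework, to reduce bilinear statements to diagonal quadratic ones.

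For (ii), since strong $H^{1,2}$-convergence is preserved by linear combinations, it suffices to show that whenever $u_i\to u$ strongly in $H^{1,2}$ the nonnegative functions $\Gamma_i(u_i)$ $L^1$-strongly converge to $\Gamma(u)$, i.e.\ $\sqrt{\Gamma_i(u_i)}\to\sqrt{\Gamma(u)}$ strongly in $L^2$. A subsequence of the $L^2$-bounded family $\sqrt{\Gamma_i(u_i)}$ weakly converges to some $g$; the weak-liminf part of Theorem~\ref{thm:Mosco} forces $g\geq \sqrt{\Gamma(u)}$ $\meas$-a.e., while the identity
$$
\int \Gamma_i(u_i)\,\dist\meas_i = 2\Ch^i(u_i) \longrightarrow 2\Ch(u) = \int \Gamma(u)\,\dist\meas
$$
gives $\|g\|_{L^2}^2 \leq \|\sqrt{\Gamma(u)}\|_{L^2}^2$. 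Hence $g=\sqrt{\Gamma(u)}$, and convergence of norms combined with uniform convexity of $L^2$ upgrades weak to strong $L^2$-convergence.

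For (i), fix $\phi\in\mathrm{LIP}_c(Y)$; by density of such $\phi$ in $C_c(Y)$ and the uniform $L^p$-bound it suffices to prove
$$
\int \phi\, \Gamma_i(v_i, w_i)\,\dist\meas_i \longrightarrow \int \phi\, \Gamma(v, w)\,\dist\meas.
$$
I regularize $v$ by heat flow: let $v^t := h_t v$ and $v_i^t := h_{t,i} v_i$. For each fixed $t>0$, Mosco-based gradient-flow stability provides $v_i^t \to v^t$ strongly in $H^{1,2}$ with $\sup_i(\|v_i^t\|_{H^{1,2}} + \|\Delta_i v_i^t\|_{L^2})<\infty$, and Theorem~\ref{thm:lap} yields $\Delta_i v_i^t \to \Delta v^t$ strongly in $L^2$. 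Taking a recovery sequence $\phi_i$ of $\phi$ with $\sup_i\|\Gamma_i(\phi_i)\|_{L^\infty}<\infty$ from Theorem~\ref{thm:Mosco}(b), the Leibniz rule for $\Gamma_i$ together with the defining identity of $\Delta_i$ gives
$$
\int \phi_i\, \Gamma_i(v_i^t, w_i)\,\dist\meas_i = -\int \phi_i w_i \Delta_i v_i^t\,\dist\meas_i - \int w_i\, \Gamma_i(v_i^t, \phi_i)\,\dist\meas_i.
$$
As $i\to\infty$ with $t$ fixed, the first right-hand-side term passes to the limit by strong $L^2$-convergence of $\Delta_i v_i^t$ against weak $L^2$-convergence of $\phi_i w_i$, and the second by applying (ii) to the strongly convergent pair $(v_i^t,\phi_i)$ paired with the weak $L^2$-convergence of $w_i$. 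The analogous integration-by-parts identity on the limit space reconstructs the target integral with $v^t$ replacing $v$. Letting finally $t\to 0^+$, one uses the bilinear Cauchy--Schwarz estimate $|\Gamma_i(v_i-v_i^t,w_i)|\leq \sqrt{\Gamma_i(v_i-v_i^t)}\sqrt{\Gamma_i(w_i)}$ together with the uniform $L^p$-bound on $\Gamma_i(v_i,w_i)$ to exchange the two limits.

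The main obstacle is the circular dependency between (i) and (ii): the proof of (i) ultimately invokes (ii) applied to the auxiliary pairs $(v_i^t,\phi_i)$ rather than the original data, so the ordering (ii) $\Rightarrow$ (i) is forced. Moreover, the simultaneous passages $t\to 0^+$ and $i\to\infty$ must be controlled by uniform estimates — provided by gradient-flow contractivity and by the $L^p$-boundedness hypothesis — so that the approximation errors from replacing $\phi$ with $\phi_i$ and $v_i$ with $v_i^t$ vanish uniformly in $i$.
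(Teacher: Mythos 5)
The paper does not actually prove Theorem~\ref{thm:cont_reco}: it is quoted verbatim from \cite{AmbrosioHonda} (Theorem~5.7 therein), which in turn builds on the theory of convergence of derivations from \cite{AmbrosioStraTrevisan}. Your attempt therefore has to be judged on internal correctness, and on this count it is a mixed picture.

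Your proof of (ii) is essentially right, but the justification for the pointwise inequality $g\geq\sqrt{\Gamma(u)}$ is mis-attributed. Theorem~\ref{thm:Mosco}(a) only gives the \emph{global} inequality $\Ch(u)\leq\liminf_i\Ch^i(u_i)$ and cannot by itself produce an $\meas$-a.e.\ inequality between $g$ and $\sqrt{\Gamma(u)}$. What you actually need is the localized lower semicontinuity of Lemma~\ref{lem:sci}, namely $\liminf_i\int\chi\sqrt{\Gamma_i(u_i)}\,\dist\meas_i\geq\int\chi\sqrt{\Gamma(u)}\,\dist\meas$ for l.s.c.\ weights $\chi\geq 0$, applied on an exhaustion of open sets (this is exactly how the paper argues in the proof of Theorem~\ref{thm:compact loc sob}). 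With that substitution, the ``weak limit has $\|g\|_{L^2}\leq\|\sqrt{\Gamma(u)}\|_{L^2}$ and $g\geq\sqrt{\Gamma(u)}$, hence equality'' argument is fine.

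The proof of (i) contains genuine gaps. First, Theorem~\ref{thm:lap}(2) only gives $L^2$-\emph{weak} convergence $\Delta_i v_i^t\rightharpoonup\Delta v^t$; the strong $L^2$-convergence you use in passing to the limit in the term $\int\phi_i w_i\Delta_i v_i^t\,\dist\meas_i$ requires a spectral-calculus consequence of Mosco convergence (continuity of $\lambda\mapsto\lambda e^{-t\lambda}$ on $[0,\infty)$ vanishing at infinity, applied to strongly resolvent-convergent generators), which is not delivered by the results stated in the paper. Second, for the term $\int w_i\Gamma_i(v_i^t,\phi_i)\,\dist\meas_i$, part (ii) provides only $L^1$-strong convergence of $\Gamma_i(v_i^t,\phi_i)$; to pair against $w_i$ which is merely $L^2$-\emph{weakly} convergent, you need $L^2$-strong convergence of $\Gamma_i(v_i^t,\phi_i)$, and the upgrade from $L^1$-strong plus $L^2$-boundedness to $L^2$-strong does not come for free in the variable-measure setting (one would have to show $\limsup_i\|\Gamma_i(v_i^t,\phi_i)\|_{L^2}\leq\|\Gamma(v^t,\phi)\|_{L^2}$). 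Third, and most seriously, the final interchange of $i\to\infty$ with $t\to 0^+$ rests on showing $\sup_i\Ch^i(v_i-v_i^t)\to 0$ as $t\to 0^+$. Contractivity only gives $\Ch^i(v_i-v_i^t)\leq\Ch^i(v_i)$, with no rate, and the assumed $L^p$-bound on $\Gamma_i(v_i,w_i)$ does not control $\Gamma_i(v_i-v_i^t,w_i)$. To close this one has to argue that strong $H^{1,2}$-convergence of $v_i$ gives a uniform-in-$i$ integrability of the spectral measures $\lambda\,dE^i_{v_i}(\lambda)$, so that $\int\lambda(1-e^{-t\lambda})^2\,dE^i_{v_i}(\lambda)\to 0$ uniformly in $i$; this is plausible but needs to be written out and is not covered by the tools you invoke. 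As a result, (i) as written is not a complete proof, though the underlying heat-flow-plus-integration-by-parts strategy could likely be repaired with the above ingredients.
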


\begin{lemma}[Lower semicontinuity]\label{lem:sci}
Let $f \in H^{1,2}(X,\dist,\meas)$ and let $f_i\in H^{1,2}(X_i,\dist_i,\meas_i)$
be strongly convergent in $H^{1,2}$ to $f$. Then
$$
\liminf_{i\to\infty}\int_{X_i} g\sqrt{\Gamma_i(f_i)}\dist\meas_i\geq\int_Xg\sqrt{\Gamma(f)}\dist\meas
$$
for any lower semicontinuous function $g:Y\to [0,+\infty]$.
\end{lemma}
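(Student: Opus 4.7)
The plan is to reduce the statement to testing $L^2$-strong convergence of $\sqrt{\Gamma_i(f_i)}$ against nonnegative continuous compactly supported test functions, and then to approximate $g$ from below by such functions exploiting lower semicontinuity and the properness of $Y$.

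First, I would invoke Theorem~\ref{thm:cont_reco}(ii) with $v_i=w_i=f_i$, which yields that $\Gamma_i(f_i)$ converges $L^1$-strongly to $\Gamma(f)$. By the definition given right after Definition~\ref{def:l2}, $L^1$-strong convergence of nonnegative functions is precisely $L^2$-strong convergence of their square roots, so $\sqrt{\Gamma_i(f_i)}$ converges $L^2$-strongly to $\sqrt{\Gamma(f)}$. In particular, for every $\varphi\in C_c(Y)$ one has $\int_{X_i}\varphi\sqrt{\Gamma_i(f_i)}\dist\meas_i\to\int_X\varphi\sqrt{\Gamma(f)}\dist\meas$ by Definition~\ref{def:l2}.

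Next I would approximate $g$ from below. Since $(Y,\dist)$ is proper, it is a locally compact Polish space, hence any $g\colon Y\to[0,+\infty]$ lower semicontinuous is the pointwise supremum of an increasing sequence $\{g_n\}\subset C_c(Y)$ with $0\leq g_n\leq g$. A concrete construction is to set $g^{(n)}(y):=\min\bigl(n,\inf_{z\in Y}(g(z)+n\dist(y,z))\bigr)$, which is bounded $n$-Lipschitz and increases pointwise to $g$ by lower semicontinuity, and then to multiply by a compactly supported Lipschitz cutoff $\chi_n$ with $0\leq\chi_n\leq\chi_{n+1}\leq 1$ and $\chi_n\equiv 1$ on $B_n(x)$.

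For fixed $n$, monotonicity $g\geq g_n$ and the convergence established in the first paragraph give
\[
\liminf_{i\to\infty}\int_{X_i}g\sqrt{\Gamma_i(f_i)}\dist\meas_i
\geq \liminf_{i\to\infty}\int_{X_i}g_n\sqrt{\Gamma_i(f_i)}\dist\meas_i
= \int_X g_n\sqrt{\Gamma(f)}\dist\meas.
\]
Letting $n\to\infty$ and applying the monotone convergence theorem on the right yields the desired inequality. The only place where care is required is in the approximation step, namely ensuring that the constructed $g_n$ are continuous with compact support in the ambient space $Y$ and still monotonically recover $g$; this is however standard thanks to the properness of $Y$ granted by the Bishop--Gromov inequality, so there is no genuine obstacle.
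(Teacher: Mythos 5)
The paper does not supply its own proof of this lemma: it is quoted verbatim from \cite{AmbrosioHonda} (Lemma 5.8 therein), so there is no argument in the text to compare against. Your proof is nevertheless correct and self-contained given the other results recalled in Section~2. The reduction is clean: Theorem~\ref{thm:cont_reco}(ii) applied with $v_i=w_i=f_i$ upgrades the hypothesis of strong $H^{1,2}$-convergence to $L^1$-strong convergence of $\Gamma_i(f_i)$, hence (by the definition following Definition~\ref{def:l2}) to $L^2$-strong, and in particular $L^2$-weak, convergence of $\sqrt{\Gamma_i(f_i)}$, giving the identity $\int_{X_i}\varphi\sqrt{\Gamma_i(f_i)}\dist\meas_i\to\int_X\varphi\sqrt{\Gamma(f)}\dist\meas$ for every $\varphi\in C_c(Y)$. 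Your inf-convolution $g^{(n)}$ is bounded, $n$-Lipschitz, increases pointwise to $g$ by lower semicontinuity, and after multiplication by the cut-offs $\chi_n$ produces $g_n\in C_c(Y)$ increasing to $g$ and dominated by $g$; then the chain $\liminf_i\int g\sqrt{\Gamma_i(f_i)}\geq\liminf_i\int g_n\sqrt{\Gamma_i(f_i)}=\int g_n\sqrt{\Gamma(f)}$ followed by monotone convergence in $n$ closes the argument.

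The only caveat worth flagging is that Theorem~\ref{thm:cont_reco} and Lemma~\ref{lem:sci} are both taken as black boxes from the same source, so your derivation would be circular if the reference proved the former using the latter. The numbering quoted in the paper (Theorem~5.7 before Lemma~5.8) indicates that this is not the case, so the dependency is sound; still, it is worth being aware that the argument you give is not independent of \cite{AmbrosioHonda} but rather derives one of its stated lemmas from another. Within the framework of the present paper, where both are available, your proof is entirely valid.
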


We conclude this section with the following technical lemma, which is an easy consequence of the previous
two stability results.

\begin{lemma}\label{lem:exist app}
For any $f \in \mathrm{LIP}_c(B_R(x), \dist)$ there exist $f_i \in \mathrm{LIP}_c(B_R(x_i), \dist_i)$ satisfying 
$\sup_i\|\Gamma_i (f_i)\|_{L^{\infty}}<\infty$ and strongly convergent to $f$ in $H^{1, 2}$. 
\end{lemma}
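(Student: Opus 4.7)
The strategy is to construct $f_i$ as the product of a Mosco recovery sequence for $f$ on $X_i$ and a distance-based cut-off $\chi_i$ chosen to force compact support in $B_R(x_i)$. Since $\supp f$ is compact in $B_R(x)$, I would fix radii $r<r'<R$ with $\supp f\subset\overline{B}_r(x)$ and let $\psi:[0,\infty)\to[0,1]$ be the piecewise linear function that equals $1$ on $[0,r]$ and vanishes on $[r',\infty)$. Define $\chi(y):=\psi(\dist(x,y))$ on $X$ and $\chi_i(y):=\psi(\dist_i(x_i,y))$ on $X_i$. These are $1/(r'-r)$-Lipschitz, supported in the closed $r'$-ball about the corresponding base point, and satisfy $\chi f=f$.

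Applying Theorem~\ref{thm:Mosco}(b) to $f\in\mathrm{LIP}(X,\dist)\cap H^{1,2}(X,\dist,\meas)$ produces $\tilde f_i\in\mathrm{LIP}(X_i,\dist_i)\cap H^{1,2}(X_i,\dist_i,\meas_i)$ strongly $H^{1,2}$-convergent to $f$ with $\sup_i\|\Gamma_i(\tilde f_i)\|_\infty<\infty$; truncating at the levels $\pm\|f\|_\infty$ preserves these properties (locality of $\Gamma_i$ bounds the slope of the truncation, and $f$ itself lies in the truncation range) while adding $\sup_i\|\tilde f_i\|_\infty\le\|f\|_\infty$. Setting $f_i:=\chi_i\tilde f_i$, the inclusion $\supp f_i\subset\overline{B}_{r'}(x_i)\Subset B_R(x_i)$ places $f_i$ in $\mathrm{LIP}_c(B_R(x_i),\dist_i)$, and the Leibniz estimate
\[
\Gamma_i(f_i)\le 2\chi_i^2\Gamma_i(\tilde f_i)+2\tilde f_i^2\Gamma_i(\chi_i)\le 2\sup_i\|\Gamma_i(\tilde f_i)\|_\infty+\frac{2\|f\|_\infty^2}{(r'-r)^2}
\]
yields the desired uniform $L^\infty$-bound on $\Gamma_i(f_i)$.

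What remains is strong $H^{1,2}$-convergence $f_i\to f$. The $L^2$-strong part follows from the uniform convergence $\chi_i\to\chi$ on bounded subsets of $Y$ (a direct consequence of $x_i\to x$), the $L^2$-strong convergence $\tilde f_i\to f$ with its uniform $L^\infty$-bound, and the identity $\chi f=f$. For the Cheeger energy, locality of $\Gamma_i$ together with $\chi_i\equiv 1$ on $B_r(x_i)$ gives $\Gamma_i(\chi_i)=\Gamma_i(\chi_i,\tilde f_i)=0$ there, so
\[
\Ch^i(f_i)=\tfrac{1}{2}\int_{B_r(x_i)}\Gamma_i(\tilde f_i)\dist\meas_i+\tfrac{1}{2}\int_{\overline{B}_{r'}(x_i)\setminus B_r(x_i)}\Gamma_i(\chi_i\tilde f_i)\dist\meas_i.
\]
The main obstacle is passing to the limit here. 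Theorem~\ref{thm:cont_reco}(ii) applied with $v_i=w_i=\tilde f_i$ gives $\sqrt{\Gamma_i(\tilde f_i)}\to\sqrt{\Gamma(f)}$ in $L^2$-strong, and since by locality $\Gamma(f)$ vanishes off $\supp f\subset B_r(x)$, Lemma~\ref{lem:sci} together with a Portmanteau-type comparison sends the first integral to $\int_X\Gamma(f)\dist\meas=2\Ch(f)$. The annular integral is expanded via Leibniz into contributions of $\chi_i^2\Gamma_i(\tilde f_i)$, $\chi_i\tilde f_i\Gamma_i(\chi_i,\tilde f_i)$, and $\tilde f_i^2\Gamma_i(\chi_i)$; each of them vanishes in the limit because both $\Gamma(f)$ and $f$ vanish on the limit annulus (using the strong convergences of $\sqrt{\Gamma_i(\tilde f_i)}$ and $\tilde f_i$ against a continuous cut-off localizing near the annulus, the uniform bounds on $\chi_i$ and its slope, and Cauchy--Schwarz for the cross term). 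Combined with the Mosco weak-$\liminf$ from Theorem~\ref{thm:Mosco}(a), this completes the verification.
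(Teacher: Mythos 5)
Your proposal is correct in spirit and can be made rigorous, but it takes a genuinely different route from the paper's proof. The paper does not use distance-based cut-offs $\chi_i=\psi(\dist_i(x_i,\cdot))$; instead it invokes the \emph{good} cut-off functions $\phi_i\in\mathcal D(\Delta_i)$ of \eqref{eq:good cf}, whose crucial extra feature is the uniform bound $\abs{\Delta_i\phi_i}+\Gamma_i(\phi_i)\le C$. Combined with $L^2$-weak compactness and the weak stability of the Laplacian (Theorem~\ref{thm:lap}), this yields, after a subsequence, \emph{strong} $H^{1,2}$-convergence of the cut-offs themselves; then $\phi_i g_i\to\phi f=f$ strongly in $H^{1,2}$ is a routine product argument for two strongly $H^{1,2}$-convergent, uniformly Lipschitz sequences. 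Your cut-offs carry no such Laplacian bound, and indeed the paper explicitly warns (citing \cite{AmbrosioStraTrevisan}) that a fixed Lipschitz function of the ambient distance, restricted to the $X_i$, need not be strongly $H^{1,2}$-convergent. You avoid this pitfall --- correctly --- by never claiming strong convergence of $\chi_i$: you split $\Ch^i(\chi_i\tilde f_i)$ into the inner ball (where $\chi_i\equiv1$) and the annulus, bound the inner term above by $\Ch^i(\tilde f_i)\to\Ch(f)$, show the annular contributions vanish because $f$ and $\Gamma(f)$ vanish on the limiting annulus (via $L^1$-strong convergence of $\tilde f_i^2$ and $\Gamma_i(\tilde f_i)$, a Portmanteau argument, and Cauchy--Schwarz for the cross term), and close the $\liminf$ side with the Mosco weak lower bound. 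This is a viable alternative, though it shifts the technical weight onto the Portmanteau estimates on annuli; to make them work you should take $r$ with $\supp f\subset B_r(x)$ a \emph{strictly} compact inclusion (not merely $\supp f\subset\overline B_r(x)$) and account for the drift of the annulus centers $x_i\to x$ by enlarging slightly to a fixed annulus around $x$. The paper's argument buys a cleaner conclusion at the cost of the Mondino--Naber cut-offs and Theorem~\ref{thm:lap}; yours stays at the level of bare Lipschitz calculus and the Mosco framework at the cost of more delicate limit-passing. Both are sound.
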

\begin{proof}
By Theorem~\ref{thm:Mosco} there exist $g_i \in \mathrm{LIP}(X_i, \dist_i) \cap H^{1, 2}(X_i, \dist_i, \meas_i)$ with 
$\sup_i\|\Gamma_i (g_i)\|_{L^{\infty}}<\infty$, strongly convergent to $f$ in $H^{1, 2}$.
Let $\epsilon >0$ with $\supp f \subset B_{R-3\epsilon}(x)$, set $r_1:=R-3\epsilon$, $R_1:=R-2\epsilon$ and let
$\phi_{i} \in \mathcal{D}(\Delta_i)$ be satisfying \eqref{eq:good cf}.
By Theorems \ref{thm:Mosco}, \ref{thm:lap} and the compactness with respect to the $L^2$-weak convergence, 
with no loss of generality we can assume that $\phi_{i}$ strongly converge to some $\phi \in \mathcal{D}(\Delta)$ in $H^{1, 2}$.
Then, since $\sup_i\|\Gamma_i (\phi_i)\|_{L^{\infty}}<\infty$, it is not difficult to check that $f_{i}:=\phi_ig_{i}$ strongly converge 
to $\phi f=f$ in $H^{1, 2}$, which completes the proof.
\end{proof}

\subsection{Local Sobolev spaces}

Next,  let us discuss local analysis on $\RCD^*(K, N)$-spaces, which is the main purpose of the paper.

\begin{definition}[Sobolev spaces $H^{1,2}_0$, $\hat{H}^{1,2}_0$]\label{def:loc sob}
Let $U$ be an open subset of $X$.
\begin{enumerate}
\item{($H^{1,2}_0$-Sobolev space)} We denote by $H^{1,2}_0(U, \dist, \meas )$ the 
$H^{1,2}$-closure of $\mathrm{LIP}_c(U, \dist)$.
\item{($\hat{H}^{1,2}_0$-Sobolev space)} We denote by $\hat{H}^{1,2}(U, \dist, \meas)$ the set of all
$f \in H^{1,2}(X,\dist,\meas)$ such that $f=0$ $\meas$-a.e. in $X\setminus U$. 
\end{enumerate}
\end{definition}

It is trivial that $H^{1,2}_0(U,\dist,\meas)$ and $\hat{H}^{1,2}_0(U, \dist, \meas)$ are closed subspaces of $H^{1,2}(X, \dist, \meas)$,
with $H^{1,2}_0(U,\dist,\meas)\subset\hat{H}^{1,2}_0(U, \dist, \meas)$. A kind of ``reverse'' inclusion is provided in the
following lemma.

\begin{lemma}\label{lem:ch} For all $x\in X$ and $R>0$ one has
$$\hat{H}^{1,2}_0(B_R(x), \dist, \meas)=\bigcap_{\epsilon>0}H^{1,2}_0(B_{R+\epsilon}(x), \dist, \meas).$$ 
In addition, for all $x\in X$, the equality $\hat{H}^{1,2}_0(B_R(x), \dist, \meas)=H^{1,2}_0(B_R(x), \dist, \meas)$
holds with at most countably many exceptions.
\end{lemma}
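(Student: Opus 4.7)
The plan is to establish the set-theoretic equality first, and then to derive the countable-exceptions statement from it by invoking Lemma~\ref{lem:ulam}. For the equality $\hat{H}^{1,2}_0(B_R(x))=\bigcap_{\epsilon>0}H^{1,2}_0(B_{R+\epsilon}(x))$, the inclusion $\subset$ is a cutoff-and-approximate argument. Given $f\in\hat{H}^{1,2}_0(B_R(x))$ and $\epsilon>0$, use properness of $\RCD^*(K,N)$-spaces to pick $\chi\in\mathrm{LIP}_c(X,\dist)$ with $0\le\chi\le 1$, $\chi\equiv 1$ on $B_R(x)$ and $\supp\chi\subset B_{R+\epsilon}(x)$ (e.g.\ by composing $\dist(x,\cdot)$ with an affine cutoff), and by Definition~\ref{def:Chee} approximate $f$ in $H^{1,2}$ by $f_n\in\mathrm{LIP}_b(X,\dist)\cap L^2(X,\meas)$. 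Since $f=0$ $\meas$-a.e.\ on $X\setminus B_R(x)$ and $\chi\equiv 1$ on $B_R(x)$, one has $\chi f=f$ a.e., and the Leibniz rule together with $\|\chi\|_\infty+\|\Gamma(\chi)\|_\infty<+\infty$ yields $\chi f_n\to f$ strongly in $H^{1,2}$; since $\chi f_n\in\mathrm{LIP}_c(B_{R+\epsilon}(x))$, this proves $f\in H^{1,2}_0(B_{R+\epsilon}(x))$.

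For the reverse inclusion $\supset$, any $f\in\bigcap_\epsilon H^{1,2}_0(B_{R+\epsilon}(x))$ is, for each fixed $\epsilon$, an $L^2$-limit of functions compactly supported in $B_{R+\epsilon}(x)$, whence $f=0$ $\meas$-a.e.\ on $X\setminus B_{R+\epsilon}(x)$; unioning as $\epsilon\downarrow 0$ yields $f=0$ $\meas$-a.e.\ on $X\setminus\overline{B}_R(x)$, which up to the $\meas$-negligible sphere $\partial B_R(x)$ is exactly the defining condition of $\hat{H}^{1,2}_0(B_R(x))$. With the set equality in hand, the countable-exceptions statement follows by applying Lemma~\ref{lem:ulam} on the separable metric (in fact Hilbert) space $Z:=H^{1,2}(X,\dist,\meas)$, with
\[
f_R(u):=-\mathrm{dist}_{H^{1,2}}\!\bigl(u,H^{1,2}_0(B_R(x))\bigr),\qquad g_R(u):=-\mathrm{dist}_{H^{1,2}}\!\bigl(u,\hat{H}^{1,2}_0(B_R(x))\bigr),
\]
which are continuous (hence upper semicontinuous) in $u$. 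The trivial inclusion $H^{1,2}_0\subset\hat{H}^{1,2}_0$ gives $f_R\le g_R$, while the just-established equality provides $\hat{H}^{1,2}_0(B_R)\subset H^{1,2}_0(B_{R+\epsilon})$ and hence $g_R\le f_{R+\epsilon}$. Lemma~\ref{lem:ulam} then produces an at most countable exceptional set off which $f_R\equiv g_R$ on all of $Z$, and closedness of both subspaces in $H^{1,2}$ forces them to coincide.

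The most delicate point is the $\supset$ direction of the first identity when the sphere $\partial B_R(x)$ carries positive $\meas$-measure. Since $R\mapsto\meas(B_R(x))$ is monotone, this happens only at an at most countable set of radii; for those the $L^2$-vanishing argument places $f=0$ merely on $X\setminus\overline{B}_R(x)$ and not on the sphere, so one has to exploit the full $H^{1,2}$-regularity of $f$, for instance by testing against good cutoffs from \eqref{eq:good cf} concentrated in thin annuli around $\partial B_R(x)$ and passing to the limit along the approximating sequences in $\mathrm{LIP}_c(B_{R+\epsilon}(x))$, in order to rule out a nontrivial concentration of $f$ on the sphere.
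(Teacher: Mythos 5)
Your main argument is correct and essentially matches the paper's: the $\subset$ direction is exactly the cutoff-and-approximate argument the authors use, and your use of Lemma~\ref{lem:ulam} with the (negatives of) distance functions $u\mapsto-\mathrm{dist}_{H^{1,2}}(u,\cdot)$ is a harmless variant of the paper's choice, which uses the characteristic functions of the two closed subspaces (also upper semicontinuous); both choices satisfy the order relation $f_R\le g_R\le f_{R+\epsilon}$ for the same set-theoretic reasons, and closedness of the subspaces turns ``$f_R\equiv g_R$'' back into equality of sets in either case.

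The closing paragraph of your proposal, however, reflects a genuine misconception. You worry that for countably many radii $R$ the sphere $\partial B_R(x)$ might carry positive $\meas$-measure, and propose a supplementary argument for those radii. This cannot happen in an $\RCD^*(K,N)$ space: the thin-annulus estimate \eqref{eq:annulus} (cited in the preliminaries, and a consequence of the doubling property and the length assumption) gives $\meas\bigl(B_r(x)\setminus B_{(1-\delta)r}(x)\bigr)\le\epsilon\,\meas(B_r(x))$ uniformly, and since $(X,\dist)$ is a length space one has $\overline{B}_R(x)\subset B_\rho(x)$ for every $\rho>R$, so $\partial B_R(x)$ sits inside arbitrarily thin annuli and hence $\meas(\partial B_R(x))=0$ for \emph{every} $R>0$ and every $x$. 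Indeed, this is not merely a technical convenience: were the sphere to have positive measure, the $\supset$ inclusion of the first identity — which the lemma claims for all $R$, not merely all but countably many — would simply be false (a function supported on $\partial B_R(x)$ would lie in $\bigcap_\epsilon H^{1,2}_0(B_{R+\epsilon}(x))$ but not in $\hat H^{1,2}_0(B_R(x))$). So your ``delicate point'' would have been a counterexample to the statement rather than a gap in the proof; the correct resolution is to invoke \eqref{eq:annulus} directly, as the paper does, rather than to attempt a rescue via cutoffs on thin annuli.
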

\begin{proof} The inclusion $\supset$ is a direct consequence of the definition and of the fact that
boundaries of balls are $\meas$-negligible (this follows by the thin annulus property \eqref{eq:annulus} ensured by the
doubling and length assumption). Let us check the converse inclusion.
Let $f \in \hat{H}^{1,2}_0(B_R(x), \dist, \meas)$ and let $\epsilon>0$.
Take $f_i \in \mathrm{LIP}(X, \dist) \cap H^{1, 2}(X, \dist, \meas)$ with $\|f_i -f\|_{H^{1, 2}} \to 0$ as $i \to \infty$, and take  $\phi \in \mathrm{LIP}_c(X, \dist)$ with $\phi|_{B_{R+\epsilon/2}(x)}\equiv 1$ and $\supp \phi \subset B_{R+\epsilon}(x)$.
Then, since $\phi f_i \in H^{1,2}_0(B_{R+\epsilon}(x), \dist, \meas)$ and $\phi f_i \to \phi f=f$ in $H^{1, 2}(X, \dist, \meas)$, 
we have $f \in H^{1,2}_0(B_{R+\epsilon}(x), \dist, \meas)$, which completes the proof of the first statement.

To prove the second one, it suffices to apply Lemma~\ref{lem:ulam} to the characteristic funtions of the
sets $H^{1,2}_0(B_R(x),\dist,\meas)$, $\hat{H}^{1,2}_0(B_R(x),\dist,\meas)$, defined in the separable Hilbert space 
$Z=H^{1,2}(X,\dist,\meas)$.
\end{proof}

Accordingly, for $U\subset X$ open we can define the local Cheeger-energy $\Ch_U:L^2(X, \meas) \to [0, +\infty]$ by 
\begin{equation}\label{eq:local energy}
\Ch_U(f):=
\begin{cases}\Ch(f) &\text{if $f\in H^{1, 2}_0(U, \dist, \meas)$;}\\
+\infty &\text{otherwise}
\end{cases}
\end{equation}
and put $\Ch_{(x,R)}:=\Ch_{B_R(x)}$.

Let us now discuss the Dirichlet problem on a ball $B_R(x)$.

\begin{definition}[Dirichlet Laplacian on balls]\label{def:dlapballs}
Let $\mathcal{D}_0(\Delta, B_R(x))$ denote the set of $f \in H^{1, 2}_0(B_R(x), \dist, \meas)$ such that 
there exists $h:=\Delta_{x,R} f\in L^2(B_R(x),\meas)$ satisfying
$$\int_{B_R(x)} hg\dist\meas=-\int_{B_R(x)} \Gamma(f,g)\dist\meas\qquad\forall g\in H^{1,2}_0(B_R(x),\dist,\meas).
$$
\end{definition}

Strictly speaking, this Dirichlet Laplacian $\Delta_{x,R}$ should not be confused with the operator $\Delta$, for this reason we adopted a
distinguished symbol.

It follows from standard arguments that the spectrum of $\Delta_{x,R}$ is discrete and unbounded
(except when $X$ consists of a single point), so we denote it by 
$$
0\le \lambda_1^D(B_R(x)) \le \lambda^D_2(B_R(x)) \le \cdots \to +\infty,
$$
counted with multiplicities.
Moreover, if $X \setminus B_{R+\epsilon}(x) \neq \emptyset$ for some $\epsilon>0$, then $\lambda_1^D(B_R(x))>0$, which is a direct consequence of Sobolev inequalities (c.f. (\ref{eq:00})).

Let us now introduce the local Sobolev space $H^{1, 2}(B_R(x), \dist, \meas)$ on a $\RCD^*(K, N)$-space $(X, \dist, \meas)$. See for instance \cite{Cheeger,Shanmugalingam} for the definition of general local Sobolev space $H^{1, p}(U, \dist, \meas)$ for any $p \in [1, \infty)$ and any open subset $U$ of $X$, see also Remark~\ref{rem:compare}. Our working definition is this:

\begin{definition}[$H^{1,2}(U,\dist,\meas)$-Sobolev space]\label{def:reddu}
Let $U\subset X$ be open. We say that $f\in L^2(U,\meas)$ belongs to $H^{1,2}(U,\dist,\meas)$ if:
\begin{itemize} 
\item[(i)] $\phi f \in H^{1, 2}(X, \dist, \meas)$ for any $\phi \in \mathrm{LIP}_c(U, \dist)$;
\item[(ii)] $\Gamma(f)\in L^1(U,\meas)$. 
\end{itemize}
\end{definition}

Notice that condition (i) corresponds precisely to a $H^{1,2}_{\rm loc}$ property, namely (i)
holds if and only if  for any $V\Subset U$ there exists $\tilde f\in H^{1,2}(X,\dist,\meas)$ with $\tilde f\equiv f$ on $V$.
Condition (ii) makes sense, since the locality properties of $\Gamma$ ensure that $\Gamma(f)$ makes sense for all 
functions $f$ as in (i). Indeed, choosing
a sequence of functions $\chi_n\in \mathrm{LIP}_c(U, \dist)$ with $\{\chi_n=1\}\uparrow U$ and defining
$$
\Gamma(f):=\Gamma(f\chi_n)\qquad\text{$\meas$-a.e. on $\{\chi_{n+1}=1\}\setminus\{\chi_n=1\}$}
$$
we obtain an extension of the carr\'e du champ operator on $H^{1,2}(U,\dist,\meas)$
(for which we keep the same notation, being also independent of the choice of $\chi_n$) 
which retains all bilinearity and locality properties.  

{\rm \begin{remark}\label{rem:compare}
Even though it does not play a role in this paper, it is worth to compare the space $H^{1,2}(U,\dist,\meas)$ with the space
$H^{1,2}(\overline{U},\dist,\meas_{\overline{U}})$ (i.e. we apply Definition~\ref{def:Chee} to the space $\overline{U}$ with the
induced distance and measure $\meas_{\overline{U}}$) and with the space $W^{1,2}$ of Cheeger's paper \cite{Cheeger}: 
a function $f\in L^2(U)$ is said to belong to $W^{1,2}(U,\dist,\meas)$ if there exist $f_n\in L^2(U,\meas)$ convergent to $f$ 
in $L^2$ and upper gradients $g_n$ of $f_n$ with $\sup_n\|g_n\|_{L^2(U)}<+\infty$. 

Summing up, we have
$$
H^{1,2}(\overline{U},\dist,\meas_{\overline{U}})\subsetneq H^{1,2}(U,\dist,\meas)=W^{1,2}(U,\dist,\meas).
$$
and the strict inclusion may occur even when $\meas(\partial U)=0$. Since, as we said, this does not
play a role in the results of our paper, we only outline the arguments (notice that neither doubling nor
Poincar\'e are involved here, only local compactness is needed).

\smallskip
\noindent
{\bf $H^{1,2}(\overline{U},\dist,\meas_{\overline{U}})\subset H^{1,2}(U,\dist,\meas)$.} Property (i) is obvious,
since any function $f\in H^{1,2}(\overline{U},\dist,\meas_{\overline{U}})$ can be approximated
in $L^2(\overline{U},\meas_{\overline{U}})$ by bounded Lipschitz functions $f_n$ with 
$$
\sup_{n}\int_{\overline{U}}|\nabla f_n|^2\dist\meas<+\infty.
$$
One can then apply the lower semicontinuity on open sets to $f_n\chi$, with $\chi\in \mathrm{LIP}_c(U, \dist)$, to obtain that
$$
\int_V \Gamma(f)\leq\liminf_{n\to\infty}\int_V\Gamma(f_n\chi)\dist\meas\leq
\sup_{n}\int_{\overline{U}}|\nabla f_n|^2\dist\meas
$$
for any open set $V$ with $V\Subset\{\chi=1\}$. By monotone convergence, one then gets
that $\Gamma(f)\in L^1(U,\meas)$.

\smallskip
\noindent
{\bf $H^{1,2}(U,\dist,\meas)\subset W^{1,2}(U,\dist,\meas)$.} For all $f\in H^{1,2}(U,\dist,\meas)$, exploiting
property (i) of Definition~\ref{def:reddu} and arguing as in the proof of the classical Meyers-Serrin theorem,
one can prove the existence of locally Lipschitz functions $f_n:U\to\mathbb R$ convergent to $f$ in $L^2(U,\meas)$ with 
$\limsup_n\||\nabla f_n|\|_{L^2(U)}\leq\|\Gamma(f)\|_{L^1(U)}$. Since the slope is an upper gradient for 
locally Lipschitz functions, this proves the inclusion.

\smallskip
\noindent
{\bf $W^{1,2}(U,\dist,\meas)\subset H^{1,2}(U,\dist,\meas)$.} This inclusion requires the identification between weak
upper gradients and relaxed gradients estabilished in full generality in \cite{AmbrosioGigliSavare13}. Indeed, if $f\in W^{1,2}(U,\dist,\meas)$
and if $g$ is any weak limit point in the $L^2(U)$ topology of upper gradients $g_n$ of $f_n$, $f_n\to f$ in $L^2(U,\meas)$,
then we know that $g$ is 2-weak upper gradient, according to the theory developed in \cite{Shanmugalingam}. Then, for any open
set $V\Subset U$ we can apply the identification of \cite{AmbrosioGigliSavare13} to get $h_n\in\mathrm{LIP}(\overline{V},\dist)$ with $h_n\to f$
in $L^2(\overline{V},\meas)$ and $\limsup_n\||\nabla h_n|\|_{L^2(\overline{V})}\leq \|g\|_{L^2(\overline{V})}$.
Since $V$ is arbitrary, this immediately gives that $\phi f\in H^{1,2}(X,\dist,\meas)$ for all $\phi\in\mathrm{LIP}_c(U,\dist)$,
and that $\sup_{V\Subset U}\|\Gamma(f)\|_{L^1(V)}<+\infty$.

\smallskip
\noindent
{\bf Counterexample.} An example of a function $f\in  H^{1,2}(U,\dist,\meas)\setminus H^{1,2}(\overline{U},\dist,\meas_{\overline{U}})$ can be found in Remark \ref{rem:example}.
\end{remark}

\begin{definition}[Laplacian on balls]
For $f \in H^{1, 2}(B_R(x), \dist, \meas)$, we write $f\in D(\Delta, B_R(x))$ if there exists 
$h:=\Delta_{x,R}f\in L^2(B_R(x),\meas)$  satisfying
$$
\int_{B_R(x)} hg\dist\meas=-\int_{B_R(x)} \Gamma(f, g) \dist\meas \qquad
\forall g\in H^{1,2}_0(B_R(x),\dist,\meas).
$$
\end{definition}

Since for $f \in H^{1, 2}_0(B_R(x), \dist, \meas)$ one has $f\in D(\Delta, B_R(x))$ iff
$f\in D_0(\Delta,B_R(x))$ and the laplacians are the same, we retain the same notation $\Delta_{x,R}$
of Definition~\ref{def:dlapballs}.
It is easy to check that for any $f \in \mathcal{D}(\Delta, B_R(x))$ and any 
$\phi \in \mathcal{D}(\Delta ) \cap \mathrm{LIP}_c(B_R(x), \dist)$ with $\Delta \phi \in L^{\infty}(X, \meas)$
one has (understanding $\phi\Delta_{x,R}f$ to be null out of $B_R(x)$)
$\phi f \in \mathcal{D}(\Delta )$ with
\begin{equation}\label{eq:local to global}
\Delta (\phi f)=f\Delta \phi +2\Gamma( \phi,f) +\phi \Delta_{x,R} f
\qquad\text{$\meas$-a.e. in $X$.}
\end{equation}

Finally, we recall the Sobolev inequality, which plays a role in the proof of stability of $H^{1,2}$-functions.
\begin{align}\label{eq:sobolev ineq}
\biggl(\mean{B_R(x)}|f-\mean{B_R(x)}f\dist\meas|^{2^*}\dist\meas\biggr)^{1/2^*}\leq C\biggl(\mean{B_R(x)}\Gamma(f)\dist\meas\biggr)^{1/2}
\quad\forall f \in H^{1, 2}(B_R(x), \dist, \meas),
\end{align}
where $2^*=2N/(N-2)$ if $N>2$, $2^*$ can be any power in $(2,\infty)$ if $N\in (1,2]$
and $C:=C(K, N, 2^*,R)>0$. In our context of curvature-dimension bounds, \eqref{eq:sobolev ineq} can be proved for
locally Lipschitz functions starting from the 
local Poincar\'e inequality of \cite{Rajala}, applying then Theorem~5.1 of \cite{HaKo} in combination with the
Bishop-Gromov inequality. By density, it extends to global $H^{1,2}$-functions. Since
$H^{1,2}(B_R(x),\dist,\meas)$ locally coincide with global $H^{1,2}$-functions, a simple
monotone approximation then provides the result in the class $H^{1,2}(B_R(x),\dist,\meas)$.

We also need the following volume estimate for ``thin'' annuli; for any $\epsilon >0$ there exists $\delta :=\delta (K, N, R, \epsilon)>0$ such that 
for all $x\in X$ one has
\begin{equation}\label{eq:annulus}
\meas (B_r(x) \setminus B_{(1-\delta)r}(x)) \le \epsilon \meas (B_r(x)) \qquad\forall r<R.
\end{equation}
See for instance \cite{ColdingMinicozzi} or \cite{Sturm06}.

\section{Stability of local problems with homogeneous Dirichlet conditions}\label{sec:2}

Let us start to discuss local stability on $\RCD^*$-spaces with respect to the mGH-convergence.

\begin{proposition}[Compactness of $\hat{H}_0^{1, 2}$-functions]\label{prop:compact}
Any sequence $(f_i)$ with $f_i \in \hat{H}_0^{1, 2}(B_R(x_i), \dist_i, \meas_i)$ and $\sup_i\|f_i\|_{H^{1, 2}}<\infty$ has a weak 
$H^{1, 2}$-convergent subsequence to some $f \in \hat{H}^{1, 2}_0(B_R(x), \dist, \meas)$.
\end{proposition}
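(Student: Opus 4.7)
The plan is to combine $L^2$-weak compactness with the weak-$\liminf$ part of the Mosco convergence result (Theorem~\ref{thm:Mosco}), and then to identify the limit as lying in $\hat{H}^{1,2}_0(B_R(x),\dist,\meas)$ by testing against functions in $C_c(Y)$ whose supports avoid $\overline{B_R(x)}$.

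First I would extract a subsequence. The uniform bound $\sup_i\|f_i\|_{L^2}<\infty$ together with the compactness of $L^2$-weak convergence recalled immediately after Definition~\ref{def:l2} yields a (non-relabeled) subsequence and some $f\in L^2(X,\meas)$ such that $f_i$ $L^2$-weakly converges to $f$. Since additionally $\sup_i\Ch^i(f_i)\le \tfrac12\sup_i\|f_i\|_{H^{1,2}}^2<\infty$, the weak-$\liminf$ part of Theorem~\ref{thm:Mosco}(a) gives $f\in H^{1,2}(X,\dist,\meas)$ with $\Ch(f)\le\liminf_i\Ch^i(f_i)$, which is exactly weak $H^{1,2}$-convergence in the sense of the definition preceding Theorem~\ref{thm:lap}.

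The key step, which I expect to be the main obstacle, is to show that $f=0$ $\meas$-a.e.\ on $X\setminus B_R(x)$. For any $\phi\in C_c(Y)$ whose support $K$ is disjoint from the closed ball $\overline{B_R(x)}$, compactness gives $\eta>0$ with $\dist(y,x)\ge R+\eta$ for every $y\in K$; the extrinsic assumption $x_i\to x$ in $Y$ then forces $K\cap B_R(x_i)=\emptyset$ for $i$ large. Since $f_i\in\hat{H}^{1,2}_0(B_R(x_i),\dist_i,\meas_i)$ vanishes $\meas_i$-a.e.\ outside $B_R(x_i)$, the product $\phi f_i$ vanishes $\meas_i$-a.e.\ on $X_i$ for such $i$, and the $C_c(Y)$-formulation of $L^2$-weak convergence gives
\begin{equation*}
\int_Y \phi f\dist\meas=\lim_{i\to\infty}\int_Y \phi f_i\dist\meas_i=0.
\end{equation*}
Varying $\phi$ over such test functions I conclude that $f=0$ $\meas$-a.e.\ on $Y\setminus\overline{B_R(x)}$. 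To absorb the spherical boundary, I would use $\meas(\partial B_R(x))=0$, which, as already observed in the proof of Lemma~\ref{lem:ch}, follows from the thin annulus property \eqref{eq:annulus}: applied with $r=R/(1-\delta/2)$, the annulus $B_r(x)\setminus B_{(1-\delta)r}(x)$ contains $\partial B_R(x)$ and has measure at most $\epsilon\meas(B_{2R}(x))$ for arbitrarily small $\epsilon$. Combining this with the previous vanishing gives $f\in\hat{H}^{1,2}_0(B_R(x),\dist,\meas)$, as required.

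The principal difficulty is precisely the identification of the support of the limit: a Cheeger-energy bound alone cannot detect where the limit is concentrated, so the argument has to exploit the particular $C_c(Y)$-based formulation of $L^2$-weak convergence in Definition~\ref{def:l2} together with the negligibility of spherical boundaries supplied by \eqref{eq:annulus}. This is the one place in the proof where the underlying doubling and length geometry of the space actually enters.
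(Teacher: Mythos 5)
Your proof is correct and follows the same overall strategy as the paper's (extract an $L^2$-weakly convergent subsequence, invoke Theorem~\ref{thm:Mosco}(a) for the Cheeger-energy bound, then identify the support of the limit), but the support-identification step is carried out differently and, in fact, more economically. The paper picks a point $z\in X\setminus\overline{B}_R(x)$, a ball $B_r(z)$ disjoint from $\overline{B}_R(x)$, and a compactly supported Lipschitz function $\phi$ on $X$ supported in $B_r(z)$, and then invokes Lemma~\ref{lem:exist app} to manufacture $H^{1,2}$-strongly convergent recovery functions $\phi_i\in\mathrm{LIP}_c(X_i,\dist_i)$ with $\supp\phi_i\subset B_r(z_i)$; the vanishing of $\int\phi_i f_i\,\dist\meas_i$ then transfers to the limit. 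You instead test directly with $\phi\in C_c(Y)$ whose support avoids $\overline{B}_R(x)$, observe that for $i$ large the support of $\phi$ is disjoint from $B_R(x_i)$, and unwind the very definition of $L^2$-weak convergence in Definition~\ref{def:l2} (which is exactly testing against $C_c(Y)$) to get $\int\phi f\,\dist\meas=\lim_i\int\phi f_i\,\dist\meas_i=0$. This removes the dependence on the recovery-sequence machinery, which is really overkill here: one never needs the $\phi_i$ to be Sobolev or $H^{1,2}$-convergent, only the cheap $C_c(Y)$-duality. Both arguments then reduce to $f=0$ $\meas$-a.e.\ in $X\setminus\overline{B}_R(x)$, after which one must invoke $\meas(\partial B_R(x))=0$; you handle this explicitly via the thin-annulus estimate \eqref{eq:annulus}, whereas the paper leaves it implicit. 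In short: same skeleton, but you replace a Sobolev-level approximation lemma with a measure-theoretic duality observation, and you tidy up the boundary-negligibility step that the paper glosses over.
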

\begin{proof}
By Theorems \ref{thm:Mosco}, \ref{thm:lap} and the compactness with respect to the $L^2$-weak convergence, 
with no loss of generality  we can assume that $f_i$ $H^{1, 2}$-weakly converge to some $f \in H^{1, 2}(X, \dist, \meas)$.
Thus it suffices to check that $f \in \hat{H}^{1, 2}_0(B_R(x), \dist, \meas)$.
Let $z \in X \setminus \overline{B}_R(x)$, let $r>0$ with $B_r(z) \subset  X \setminus \overline{B}_R(x)$ and let $\phi \in \mathrm{LIP}_c(X, \dist)$ with $\supp \phi \subset B_r(z)$.
Then, applying Lemma~\ref{lem:exist app} to $f:=\phi$ shows that there exist $\phi_i \in \mathrm{LIP}_c(X_i, \dist_i)$ 
strongly convergent to $\phi$ in $H^{1, 2}$ such that $\supp \phi_i\subset B_r(z_i)$, where $z_i \to z$.
In particular
$$
\int_X\phi f\dist \meas =\lim_{i \to \infty}\int_{X_i}\phi_if_i\dist \meas_i=0.
$$
Since $\phi$ is arbitrary, this proves that $f \in \hat{H}_0^{1, 2}(B_R(x), \dist, \meas)$.
\end{proof}

\begin{definition}[Mosco convergence of local Cheeger energies]\label{def:local mosco}
We say that the \textit{local Cheeger energies $\Ch_{\mathrm{loc}}^i$ Mosco converge to 
the local Cheeger energy $\Ch_{\mathrm{loc}}$ at $(z, R) \in X \times (0, +\infty)$}, denoted by 
\begin{equation}\label{eq:local mosco}
\Ch_{\mathrm{loc}}^i \to \Ch_{\mathrm{loc}}\,\,\mathrm{at}\,(z, R)
\end{equation}
for short, if whenever $R_i \to R$ and $z_i\ni X_i \to z$, the following conditions hold;
\begin{itemize}
\item[(a)] (\emph{Weak-$\liminf$}). For every $f_i\in L^2(B_{R_i}(z_i),\meas_i)$ $L^2$-weakly converging to $f\in L^2(B_R(z),\meas)$, one has
\[ \Ch_{(z, R)}(f)\le \liminf_{i\to\infty} \Ch^i_{(z_i, R_i)}(f_i).\]
\item[(b)] (\emph{Strong-$\limsup$}). For every $f \in L^2(B_R(z),\meas)$ there exist $f_i\in L^2(B_{R_i}(z_i),\meas_i)$, $L^2$-strongly converging to $f$ with
\begin{equation}\label{eq:optimal_locChee} \Ch_{(z, R)}(f)=\lim_{i\to \infty} \Ch^i_{(z_i, R_i)}(f_i).\end{equation}
\end{itemize}
\end{definition}

The next proposition follows from a standard argument. For the reader's convenience we give a sketch of the proof.

\begin{proposition}[Equivalence of Mosco and local spectral convergence]\label{prop:equiv}
The following properties are equivalent:
\begin{enumerate}
\item[(1)] $\Ch^i_{\mathrm{loc}} \to \Ch_{\mathrm{loc}}$ at $(z, R)$.
\item[(2)] The spectral convergence for the Dirichlet problems on $B_{R_i}(z_i)$ holds for any $R_i \to R$ and any $z_i \to z$, i.e. for any $k$, any sequence of eigenfunctions $f_{i, k}$ with the eigenvalue $\lambda_k^D(B_{R_i}(z_i))$ associated with the Dirichlet problem on $B_{R_i}(z_i)$ has a $L^2$-strong convergent subsequence to an eigenfunction $f_k \in \mathcal{D}_0(\Delta, B_R(z))$ with the eigenvalue $\lambda_k^D(B_R(z))$ (in particular $\lim_i\lambda_k^D(B_{R_i}(z_i))=\lambda_k^D(B_R(z))$ holds). 
\end{enumerate}
\end{proposition}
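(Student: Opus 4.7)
The statement is a local counterpart of the classical Kuwae--Shioya equivalence between Mosco convergence of a sequence of closed symmetric quadratic forms and spectral convergence of the associated self-adjoint operators with compact resolvent. My plan is to adapt that circle of ideas, using three essential inputs: the min-max characterization of the Dirichlet eigenvalues $\lambda_k^D(B_R(z))$; the compact embedding $H^{1,2}_0(B_R(z),\dist,\meas)\hookrightarrow L^2(B_R(z),\meas)$, which underlies the discreteness of the spectrum of $\Delta_{z,R}$ and, combined with the Rellich-type statement obtained by applying Theorem~\ref{thm:Mosco}(a) to zero-extensions, upgrades any $L^2$-weak convergent sequence with uniformly bounded $\Ch^i_{(z_i,R_i)}$ to an $L^2$-strong one; and Proposition~\ref{prop:compact}, which provides weak $H^{1,2}$-compactness across the mGH convergence.

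\textbf{Proof of (1) $\Rightarrow$ (2).} I would invoke the min-max formula
\[
\lambda_k^D(B_R(z))=\inf_{V\subset H^{1,2}_0(B_R(z),\dist,\meas)}\ \sup_{f\in V\setminus\{0\}}\frac{2\Ch_{(z,R)}(f)}{\|f\|_{L^2}^2},
\]
with the infimum taken over $k$-dimensional subspaces. Applying the strong-$\limsup$ of (1) to an $L^2$-orthonormal family $f_1,\dots,f_k$ of eigenfunctions of $\Delta_{z,R}$ and performing Gram--Schmidt on the resulting recovery sequences produces $k$-dimensional subspaces of $H^{1,2}_0(B_{R_i}(z_i),\dist_i,\meas_i)$ whose Rayleigh quotients converge to $\lambda_k^D(B_R(z))$; this yields $\limsup_i\lambda_k^D(B_{R_i}(z_i))\leq\lambda_k^D(B_R(z))$. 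For the matching lower bound, any sequence of $L^2$-normalized eigenfunctions $f_{i,k}$ is uniformly $H^{1,2}$-bounded, so Proposition~\ref{prop:compact} gives a weak $H^{1,2}$-subsequential limit $f_k$, the Rellich upgrade strengthens this to $L^2$-strong convergence, and the weak-$\liminf$ of (1) combined with $2\Ch^i_{(z_i,R_i)}(f_{i,k})=\lambda_k^D(B_{R_i}(z_i))\|f_{i,k}\|_{L^2}^2$ forces $\Ch_{(z,R)}(f_k)\leq\tfrac{1}{2}\bigl(\liminf_i\lambda_k^D(B_{R_i}(z_i))\bigr)\|f_k\|_{L^2}^2$; in particular $f_k\in H^{1,2}_0(B_R(z),\dist,\meas)$. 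A standard weak-weak/strong passage to the limit in the eigenvalue identity, using Theorem~\ref{thm:cont_reco}(i) against recovery sequences of test functions, identifies $f_k$ as an eigenfunction with eigenvalue $\lim_i\lambda_k^D(B_{R_i}(z_i))$, and induction on $k$ matches the spectra.

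\textbf{Proof of (2) $\Rightarrow$ (1).} I would first build orthonormal bases: by (2) and a diagonal argument over $k$, choose along one subsequence $L^2$-strong convergent orthonormal eigenfunctions $f_{i,k}\to f_k$ with $\lambda_k^D(B_{R_i}(z_i))\to\lambda_k^D(B_R(z))$, so that $\{f_k\}_k$ is an orthonormal basis of $L^2(B_R(z),\meas)$. For the strong-$\limsup$, expand $f=\sum_k c_k f_k$ with $\Ch_{(z,R)}(f)=\tfrac{1}{2}\sum_k c_k^2\lambda_k^D(B_R(z))$, and use as recovery sequence the truncations $f_i^{(n)}:=\sum_{k=1}^{n}c_k f_{i,k}$, whose local Cheeger energies evaluate by orthogonality to $\tfrac{1}{2}\sum_{k=1}^{n}c_k^2\lambda_k^D(B_{R_i}(z_i))$; a diagonal argument in $n$ and $i$ delivers the required sequence. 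For the weak-$\liminf$, expand $f_i=\sum_k c_{i,k}f_{i,k}$ with $c_{i,k}=\int f_if_{i,k}\di\meas_i$; since $f_{i,k}\to f_k$ $L^2$-strongly and $f_i\to f$ $L^2$-weakly, $c_{i,k}\to c_k:=\int f f_k\di\meas$ for each $k$, and Fatou's lemma applied to the nonnegative series $\sum_k c_{i,k}^2\lambda_k^D(B_{R_i}(z_i))$ gives $\Ch_{(z,R)}(f)\leq\liminf_i\Ch^i_{(z_i,R_i)}(f_i)$.

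\textbf{Main obstacle.} The delicate point, absent in the fixed-space setting, is the distinction between $H^{1,2}_0(B_R(z),\dist,\meas)$ and $\hat H^{1,2}_0(B_R(z),\dist,\meas)$ highlighted in Lemma~\ref{lem:ch} and Example~\ref{ex:1}: a priori, $L^2$-weak limits of $H^{1,2}_0$-bounded sequences on $B_{R_i}(z_i)$ land only in the larger space $\hat H^{1,2}_0(B_R(z),\dist,\meas)$. Throughout the above this is silently handled by the convention that $\Ch_{(z,R)}=+\infty$ outside $H^{1,2}_0(B_R(z),\dist,\meas)$: the weak-$\liminf$ of Mosco convergence in (1), or the bound on the spectral series in (2), forces finite-energy limits to lie in $H^{1,2}_0(B_R(z),\dist,\meas)$. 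Executing the spectral expansions and diagonalizations cleanly, without implicitly using the easier $\hat H^{1,2}_0$-version (which would correspond to the analogous Mosco convergence at radii $R+\epsilon$), is where the most care is needed and is precisely the reason this equivalence, rather than a trivial implication, is the right statement to pair with condition (\ref{eq:10}).
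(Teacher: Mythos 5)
Your proposal is correct and follows essentially the same route as the paper: for $(2)\Rightarrow(1)$ the paper likewise expands $f_i$ and $f$ in the Dirichlet eigenbases, uses strong convergence of the eigenfunctions $f_{i,k}\to f_k$ to pass to the limit in the coefficients, and applies a Fatou-type bound on the spectral series (which simultaneously forces $f\in H^{1,2}_0(B_R(z),\dist,\meas)$); for $(1)\Rightarrow(2)$ the paper simply cites \cite[Theorem 7.8]{GigliMondinoSavare13}, which is the min-max/Kuwae--Shioya argument you sketch. The only cosmetic divergence is in the strong-$\limsup$ half of $(2)\Rightarrow(1)$: the paper deduces it directly from Lemma~\ref{lem:exist app} and density of $\mathrm{LIP}_c$ in $H^{1,2}_0$, whereas you build the recovery sequence from truncated spectral expansions $\sum_{k\le n}c_kf_{i,k}$; both work, and the paper's choice avoids having to set up the orthonormal bases before the $\limsup$ part.
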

\begin{proof}
See for instance \cite[Theorem 7.8]{GigliMondinoSavare13} for the proof of the implication from (1) to (2).
We give only a proof of the converse implication.

Assume that (2) holds. 
Since Lemma~\ref{lem:exist app} yields the condition (b) in Definition~\ref{def:local mosco}, it suffices to check the condition (a).
Let $f_i \in H^{1, 2}_0(B_R(x_i), \dist_i, \meas_i)$ with $\sup_i\|f_i\|_{H^{1, 2}}<\infty$ and let $f \in L^2(B_R(x), \meas)$ be the $L^2$-weak limit function.
Then by Theorem \ref{thm:Mosco} $f$ is the $L^2$-strong limit function and $f \in H^{1, 2}(X, \dist, \meas)$.
By assumption, with no loss of generality we can assume that $\{f_{i, k}\}_k, \{f_k\}_k$ are o.n.b. in 
$H^{1, 2}_0(B_{R_i}(z_i), \dist_i, \meas_i)$, in $H^{1, 2}_0(B_R(z), \dist, \meas)$, respectively and that 
$f_{i, k}$ $L^2$-strongly converge to $f_k$ for any $k$. 

Let $f=\sum_{k=1}a_kf_k$ in $L^2(B_R(z), \meas)$, $f_i:=\sum_{k=1}^Na_{i, k}f_{i, k}$ in $H^{1, 2}_0(B_{R_i}(z_i), \dist_i, \meas_i)$, 
and denote by $f^N:=\sum_{k=1}^Na_kf_k$, $f_i^N:=\sum_{k=1}^Na_{i, k}f_{i, k}$ the corresponding finite sums.
Since 
$$a_{i, k}=\int_{B_{R_i}(z_i)}f_i f_{i, k}\dist \meas_i \to \int_{B_R(z)}f f_k\dist \meas=a_i$$
as $i \to \infty$, we have
\begin{align*}
\|f^N\|_{H^{1, 2}}^2&= \sum_{k=1}^N(a_k)^2\left(1 + \int_{B_R(z)} \Gamma (f_k) \dist \meas \right) \\
&=\sum_{k=1}^N(a_k)^2(1+\lambda_k^{D}(B_R(z))) \\
&=\lim_{i \to \infty}\sum_{k=1}^N(a_{i, k})^2(1+\lambda_k^{D}(B_R(z_i)))\\
&=\lim_{i \to \infty}\|f_i^N\|_{H^{1, 2}}^2 \le \liminf_{i \to \infty}\|f_i\|_{H^{1, 2}}^2<\infty.
\end{align*}
Thus letting $N \uparrow \infty$ shows (a).
\end{proof}

We are now in a position to introduce the main result of the paper (recall that, according to Lemma~\ref{lem:ch}, for given
$z$ condition (2) below holds for all radii, with at most countably many exceptions).

\begin{theorem}[Main equivalence]\label{thm:equiv}
The following are equivalent;
\begin{enumerate}
\item[(1)] $\Ch^i_{\mathrm{loc}} \to \Ch_{\mathrm{loc}}$ at $(z, R)$.
\item[(2)] $H^{1, 2}_0(B_R(z), \dist, \meas)=\hat{H}^{1, 2}_0(B_R(z), \dist, \meas)$.
\end{enumerate}
In particular if these conditions hold, then the Mosco convergence of local Cheeger energies at $(z, R)$ holds for any
sequence of $\RCD^*(K,N)$-spaces $(Y_i, \dist_i, \nu_i) \stackrel{GH}{\to} (X, \dist, \meas)$. 
\end{theorem}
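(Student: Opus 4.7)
The plan is to prove the two implications separately. The proof of $(2)\Rightarrow(1)$ will not use anything specific about the given mGH-convergent sequence, which is what yields the ``in particular'' clause.

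\textbf{Implication $(2)\Rightarrow(1)$ (and the ``in particular'' statement).} Fix any mGH-convergent sequence of $\RCD^*(K,N)$-spaces and any $R_i\to R$, $z_i\to z$. For the weak-$\liminf$, take $f_i\in H^{1,2}_0(B_{R_i}(z_i),\dist_i,\meas_i)$ with $\sup_i\Ch^i_{(z_i,R_i)}(f_i)<\infty$ and $L^2$-weak limit $f$. The global Mosco convergence (Theorem~\ref{thm:Mosco}) gives $f\in H^{1,2}(X,\dist,\meas)$ and $\Ch(f)\leq\liminf_i\Ch^i(f_i)$. Since $\supp f_i\subset\overline{B}_{R_i}(z_i)$ in the ambient $(Y,\dist)$, testing $f_i\meas_i$ against any $\varphi\in\mathrm{LIP}_c(Y,\dist)$ supported in $Y\setminus\overline{B}_R(z)$ (approximated by Lemma~\ref{lem:exist app} as in Proposition~\ref{prop:compact}) yields $f=0$ $\meas$-a.e. outside $\overline{B}_R(z)$, hence $f\in\hat{H}^{1,2}_0(B_R(z),\dist,\meas)$; by (2) this coincides with $H^{1,2}_0(B_R(z),\dist,\meas)$, so $\Ch_{(z,R)}(f)=\Ch(f)$ and the weak-$\liminf$ follows. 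For the strong-$\limsup$, given $f\in H^{1,2}_0(B_R(z),\dist,\meas)$ approximate it in $H^{1,2}$ by $g_k\in\mathrm{LIP}_c(B_{R-\delta_k}(z),\dist)$ with $\delta_k\downarrow 0$, apply Lemma~\ref{lem:exist app} (with radius $R-\delta_k/2$) to get $g_{k,i}\in\mathrm{LIP}_c(B_{R-\delta_k/2}(z_i),\dist_i)$ strongly $H^{1,2}$-convergent to $g_k$, and diagonalize: for a suitable $k_i\to\infty$ eventually $R-\delta_{k_i}/2<R_i$, so $f_i:=g_{k_i,i}\in H^{1,2}_0(B_{R_i}(z_i),\dist_i,\meas_i)$ strongly $H^{1,2}$-converges to $f$, yielding the recovery sequence.

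\textbf{Implication $(1)\Rightarrow(2)$.} The inclusion $H^{1,2}_0(B_R(z),\dist,\meas)\subset\hat{H}^{1,2}_0(B_R(z),\dist,\meas)$ is immediate. For the converse, fix $f\in\hat{H}^{1,2}_0(B_R(z),\dist,\meas)$. By Lemma~\ref{lem:ch} we have $f\in H^{1,2}_0(B_{R+1/k}(z),\dist,\meas)$ for every $k\geq 1$, so choose $g_k\in\mathrm{LIP}_c(B_{R+1/k}(z),\dist)$ with $g_k\to f$ in $H^{1,2}(X,\dist,\meas)$, and by Lemma~\ref{lem:exist app} (applied with radius $R+1/k$) build $g_{k,i}\in\mathrm{LIP}_c(B_{R+1/k}(z_i),\dist_i)$ strongly $H^{1,2}$-convergent to $g_k$. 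A standard diagonal extraction gives $k_i\to\infty$ such that $\|g_{k_i,i}-f\|_{H^{1,2}}\to 0$; choose the auxiliary sequence $R_i:=R+2/k_i\to R$, so that $R+1/k_i<R_i$ forces $f_i:=g_{k_i,i}\in H^{1,2}_0(B_{R_i}(z_i),\dist_i,\meas_i)$. Now apply the weak-$\liminf$ part of (1) along this freely chosen sequence $(R_i,z_i)\to(R,z)$: since $f_i\to f$ strongly in $H^{1,2}$,
$$
\Ch_{(z,R)}(f)\leq\liminf_{i\to\infty}\Ch^i_{(z_i,R_i)}(f_i)=\liminf_{i\to\infty}\Ch^i(f_i)=\Ch(f)<\infty,
$$
so $f\in H^{1,2}_0(B_R(z),\dist,\meas)$ as required.

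\textbf{Main obstacle.} The subtle direction is $(1)\Rightarrow(2)$: one must produce a sequence $f_i\in H^{1,2}_0(B_{R_i}(z_i),\dist_i,\meas_i)$ strongly $H^{1,2}$-convergent to a given $f\in\hat{H}^{1,2}_0(B_R(z),\dist,\meas)$ \emph{without yet knowing} $f\in H^{1,2}_0(B_R(z),\dist,\meas)$. The key trick is to let $R_i\to R$ \emph{from above}, to use Lemma~\ref{lem:ch} to obtain Lipschitz compactly supported approximants of $f$ on slightly larger balls $B_{R+1/k}(z)$, and then to couple the speed at which $R_i$ approaches $R$ to the diagonal parameter $k_i$; the weak-$\liminf$ in (1) is precisely the mechanism that transfers finiteness of $\Ch(f)$ into finiteness of $\Ch_{(z,R)}(f)$.
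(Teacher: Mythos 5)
Your proof is correct and follows essentially the same route as the paper: Lemma~\ref{lem:ch} to approximate a function of $\hat H^{1,2}_0(B_R(z))$ by compactly supported Lipschitz functions on slightly larger balls, Lemma~\ref{lem:exist app} to transfer them to the approximating spaces, a diagonal argument to construct a strongly $H^{1,2}$-convergent sequence along an auxiliary choice of $R_i\to R$, and the weak-$\liminf$ in (1) to conclude $\Ch_{(z,R)}(f)<\infty$; your $(2)\Rightarrow(1)$ unpacks what the paper delegates to Proposition~\ref{prop:compact} and the proof of Proposition~\ref{prop:equiv}. The only stylistic difference is that you make the diagonal index $k_i$ and the auxiliary radii $R_i=R+2/k_i$ fully explicit (which is cleaner than the subsequence notation in the paper); the phrase ``$\|g_{k_i,i}-f\|_{H^{1,2}}\to 0$'' should be read as $L^2$-strong convergence together with $\Ch^i(g_{k_i,i})\to\Ch(f)$, since the functions live in different spaces, but the diagonal argument is clearly meant in that sense.
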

\begin{proof}
We first prove the implication from (1) to (2). Assume that (1) holds.

Let $f \in \hat{H}^{1, 2}(B_R(z), \dist, \meas)$. Then by Lemma \ref{lem:ch} for any $\epsilon>0$ there exists $f_{\epsilon} \in \mathrm{LIP}_c(B_{R+\epsilon}(z), \dist)$ such that $\|f-f_{\epsilon}\|_{H^{1, 2}}<\epsilon$.
Applying Lemma \ref{lem:exist app} for $f:=f_{\epsilon}$ yields that there exist $f_{\epsilon, i} \in \mathrm{LIP}_c(B_{R+\epsilon}(z_i), \dist_i)$ such that $\sup_i\|\Gamma_i (f_{\epsilon, i})\|_{L^{\infty}}<\infty$ and $f_{\epsilon, i}$ strongly converge to $f_{\epsilon}$ in $H^{1, 2}$.
Thus there exists a subsequence $j(i)$ such that $g_{j(i)}:=f_{i^{-1}, j(i)}$ strongly converge to $f$ in $H^{1, 2}$.
Then the condition (a) in Definition \ref{def:local mosco} shows that $f \in H^{1, 2}_0(B_R(z), \dist, \meas)$, which proves (2).

Next assume that (2) holds. By the same reason as in the proof of Proposition~\ref{prop:equiv}, 
it suffices to check the condition (a) in Definition~\ref{def:local mosco}.
This is a direct consequence of Proposition~\ref{prop:compact} and assumption (2).
\end{proof}

\begin{remark} By scaling and translation invariance, Lemma~\ref{lem:ch} obviously gives
\begin{equation}\label{eq:example}
H^{1,2}_0(B_R(x), \dist_{eucl}, \mathcal{L}^n)=\hat{H}^{1, 2}_0(B_R(x), \dist_{eucl}, \mathcal{L}^n)
\qquad\forall (x, R) \in \mathbb{R}^n \times (0, +\infty).
\end{equation}
In particular any mGH-convergent sequence of $\RCD^*(K, N)$-spaces to $(\mathbb{R}^n, \dist_{eucl}, 0_n, \mathcal{H}^n)$ satisfies $\Ch^i_{\mathrm{loc}} \to \Ch_{\mathrm{loc}}$ at any $(x, R) \in \mathbb{R}^n \times (0, \infty)$.

Still using scaling invariance, one can prove an analogous property for all $R>0$ when
the metric measure space is a metric cone and $x$ is the pole. See Proposition \ref{prop:cone sobolev}.

\end{remark}

\begin{remark}[Correction to \cite{Honda}]
In \cite{Honda} the second author established second-order (or weak $C^{1, 1}$-) differential structure on a Ricci limit space $(X, \dist, \meas)$.
In the proof, one of the key estimates was
\begin{equation}\label{eq:bound}
\int_{B_r(x)}\Gamma(\Gamma(f))\dist \meas<\infty
\end{equation}
for any limit harmonic function $f: B_R(x) \to \mathbb{R}$ and any $r<R$, which means that $f$ is the uniform limit 
function of smooth harmonic functions $f_i:B_R(x_i) \to \mathbb{R}$.
In order to prove (\ref{eq:bound}), the local spectral convergence was used in his first proof.
However, as we observed in the introduction, this is not the correct argument. However, the problem can be fixed
by using the local spectral convergence for almost every radius granted by this paper. 
\end{remark}

\begin{remark}
It is natural to ask whether the Mosco convergence of the modifed Cheeger energies
$\hat{\Ch}^i_{B_R(x_i)}:L^2(X_i, \meas_i) \to [0, +\infty]$ holds or not, where
\begin{equation}\label{eq:local energy2}
\hat{\Ch}_{B_R(x_i)}^i(f):=
\begin{cases}\Ch(f) &\text{if $f\in \hat{H}^{1, 2}_0(B_R(x_i), \dist_i, \meas_i)$;}\\
+\infty &\text{otherwise.}
\end{cases}
\end{equation}
The answer is negative in general and we give an example as follows. Put $\hat{\Ch}_{(x_i,R)}:=\hat{\Ch}^i_{B_R(x_i)}$.

Let us consider the mGH-convergent sequence 
$$
(\mathbb{S}^1(s), \dist_{\mathbb{S}^1(s)}, \mathcal{H}^1) \stackrel{mGH}{\to} 
(\mathbb{S}^1(1), \dist_{\mathbb{S}^1(1)}, \mathcal{H}^1).
$$
As $s \downarrow 1$, let $x_s \in \mathbb{S}^1(s)$ with $x_s \to x_1 \in \mathbb{S}^1(1)$.
Note that 
$$
\hat{H}^{1, 2}_0(B_{\pi}(x_1), \dist_{\mathbb{S}^1(1)}, \mathcal{H}^1)=
H^{1, 2}(\mathbb{S}^1(1), \dist_{\mathbb{S}^1(1)}, \mathcal{H}^1),
$$
because $\mathcal{H}^1 (\mathbb{S}^1(1) \setminus B_{\pi}(x_1))=0$. In particular take $1 \in \hat{H}^{1, 2}_0(B_{\pi}(x_1), \dist_{\mathbb{S}^1(1)}, \mathcal{H}^1)$.
For any $s>1$ fix a canonical local isometry $\phi_s:(-\pi, \pi) \to B_{\pi}(x_s)$.
Let us prove that there is no approximating sequence $f_i \in \hat{H}^{1, 2}_0(B_{\pi}(x_{s_i}), \dist_{\mathbb{S}^1(s_i)}, \mathcal{H}^1)$ as $s_i \downarrow 1$ such that $f_i$ $L^2$-strongly converge to $1$ on $B_{\pi}(x)$ with $\sup_i\|f_i\|_{H^{1, 2}}<\infty$, which in particular implies that $\hat{\Ch}^i_{(x_{s_i}, \pi)}$ does not Mosco converge to $\hat{\Ch}_{(x_1, \pi)}$.
Indeed if such $f_i$ exist, then since $g_i:= f_i \circ \phi_{s_i} \in H^{1, 2}_0((-\pi, \pi), \dist_{eucl}, \mathcal{L}^1)$ $L^2$-strongly converge to $1$ with $\sup_i\|g_i\|_{H^{1, 2}}<\infty$, where we used the identity $H^{1, 2}_0=\hat{H}_0^{1, 2}$ on $B_{\pi}(x_{s_i})$, we would have $1 \in H^{1, 2}_0((-\pi, \pi), \dist_{eucl}, \mathcal{L}^1)$, which is a contradiction.

On the other hand by using the arguments in this section it is not difficult to see that the Mosco convergence of $\hat{\Ch}^i_{\mathrm{loc}}$-energies above is also equivalent to the spectral convergence, where the meaning of the spectrum here is taken by satisfying (\ref{eq:9}) in the $\hat{H}_0^{1, 2}$-setting (instead of $H^{1,2}_0$), and that if (2) of Theorem \ref{thm:equiv} is satisfied, then $\hat{\Ch}^i_{(x_i, R)}$ Mosco converge to $\hat{\Ch}_{(x, R)}=\Ch_{(x, R)}$.
In particular the Mosco convergence of $\hat{\Ch}^i_{\mathrm{loc}}$-energies is also satisfied under a generic assumption. 
\end{remark}

\begin{remark}[$p$-energies]
Even though we discussed only $H^{1, 2}$-Sobolev spaces for simplicity, it is easy to see that the $H^{1, p}$, $p>1$, version of all 
results above holds (except for Proposition \ref{prop:equiv}), i.e. using $p$-Cheeger energy instead of $\Ch$. See \cite{AmbrosioHonda}
for the Mosco convergence of the $p$-Cheeger energies.
\end{remark}

Let us denote by $h_t^{(x, R)}$ the heat flow on $L^2(B_R(x), \meas)$ associated with the Dirichlet problem on $B_R(x)$, namely
the gradient flow of $\Ch_{(x,R)}$ w.r.t. to the $L^2$ distance. Note that it follows from the Hilbertian theory of gradient flows that 
for any $f \in L^2(B_R(x), \meas)$ and $t>0$ one has, $h_t^{(x, R)}f \in H^{1, 2}_0(B_R(x), \dist, \meas) \cap \mathcal{D}(\Delta, B_R(x))$ with 
\begin{equation}\label{eq:bound on lap}
\|\Gamma( h_t^{(x, R)}f)\|_{L^1(B_R(x))}\le \frac{1}{t}\|f\|_{L^2(B_R(x))},\,\,\,\,
\|\Delta_{x,R} h_t^{(x, R)}f\|_{L^2(B_R(x))} \le \frac{1}{t}\|f\|_{L^2(B_R(x))}.
\end{equation}

\begin{proposition}\label{prop:conv heat flow}
Let  $f_i \in L^2(B_R(x_i), \meas_i)$ $L^2$-strongly converge to $f \in L^2(B_R(x), \meas)$ on $B_R(x)$. 
If
\begin{equation}\label{eq:generic}
H^{1, 2}_0(B_R(x), \dist, \meas)=\hat{H}^{1, 2}_0(B_R(x), \dist, \meas),
\end{equation}
then for any $t>0$ one has:
\begin{enumerate}
\item[(1)] $h_t^{(x_i, R)}f_i$ converge strongly to $h_t^{(x, R)}f$ in $H^{1, 2}$;
\item[(2)] $\Gamma_i (h_t^{(x_i, R)}f_i)$ $L^1$-strongly converge to $\Gamma(h_t^{(x, R)}f)$.
\end{enumerate}
\end{proposition}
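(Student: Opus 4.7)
The plan is to combine Theorem~\ref{thm:equiv} with the standard correspondence between Mosco convergence of convex lower-semicontinuous functionals and strong convergence of their gradient flows. Under assumption~(\ref{eq:generic}), Theorem~\ref{thm:equiv} yields $\Ch^i_{\mathrm{loc}} \to \Ch_{\mathrm{loc}}$ at $(x, R)$ in the sense of Definition~\ref{def:local mosco}; equivalently, after extending functions by zero outside the respective balls, the $L^2$-functionals $\Ch^i_{(x_i,R)}$ Mosco-converge to $\Ch_{(x,R)}$ in the variable-reference-measure framework. Recall also that $h_t^{(x_i,R)}$ is precisely the $L^2$-gradient flow of $\Ch^i_{(x_i,R)}$ extended by $+\infty$ outside $H^{1,2}_0(B_R(x_i),\dist_i,\meas_i)$, and analogously on the limit space.

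I then invoke the variable-measure analogue of the classical Brezis-Attouch stability theorem, developed in the $pmG$-setting in \cite{GigliMondinoSavare13} and revisited in \cite{AmbrosioHonda}: Mosco convergence of the energies implies, for every $t>0$, that $h_t^{(x_i,R)} f_i$ $L^2$-strongly converge to $h_t^{(x,R)} f$ and, by the energy identity satisfied by gradient flows,
$$
\Ch^i_{(x_i,R)}(h_t^{(x_i,R)} f_i) \longrightarrow \Ch_{(x,R)}(h_t^{(x,R)} f).
$$
Since $h_t^{(x_i,R)} f_i \in H^{1,2}_0(B_R(x_i),\dist_i,\meas_i)$, the local and global Cheeger energies agree on these functions by (\ref{eq:local energy}), hence $\Ch^i(h_t^{(x_i,R)} f_i) \to \Ch(h_t^{(x,R)} f)$, which together with $L^2$-strong convergence is precisely $H^{1,2}$-strong convergence, proving (1).

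For (2), I apply Theorem~\ref{thm:cont_reco}(ii) with $v_i = w_i = h_t^{(x_i,R)} f_i$: by (1) these converge strongly in $H^{1,2}$ to $h_t^{(x,R)} f$, and the theorem yields $L^1$-strong convergence of $\Gamma_i(h_t^{(x_i,R)} f_i)$ to $\Gamma(h_t^{(x,R)} f)$.

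The main obstacle is the Mosco-to-semigroup passage in the variable-measure setting. If a self-contained argument is preferred, one can work through the resolvents $J^i_\lambda f_i$, namely the unique minimizers of $v \mapsto \tfrac{1}{2\lambda}\|v-f_i\|^2_{L^2} + \Ch^i_{(x_i,R)}(v)$: their $L^2$-strong convergence to $J_\lambda f$ follows from Proposition~\ref{prop:compact} combined with clauses (a) and (b) of Definition~\ref{def:local mosco}, upgraded $H^{1,2}$-strong convergence then follows from Theorem~\ref{thm:lap} applied to the Euler-Lagrange identity $J^i_\lambda f_i - \lambda \Delta_{x_i,R} J^i_\lambda f_i = f_i$, and semigroup convergence at each $t>0$ is obtained via the exponential formula $h_t = \lim_n (J_{t/n})^n$, with energy convergence extracted from a standard $\liminf$-$\limsup$ sandwich using Lemma~\ref{lem:sci} and the gradient-flow energy identity.
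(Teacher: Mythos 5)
Your main argument reaches the conclusion and is essentially the same proof \emph{in outline}, but it packages the key technical step differently. For the $L^2$-strong convergence of $h_t^{(x_i,R)}f_i$ to $h_t^{(x,R)}f$, both you and the paper cite the Mosco-to-semigroup stability. The crucial remaining step is the upgrade to $H^{1,2}$-strong convergence, i.e.\ the energy convergence $\Ch^i_{(x_i,R)}(h_t^{(x_i,R)}f_i)\to\Ch_{(x,R)}(h_t^{(x,R)}f)$. You treat this as part of the Brezis--Attouch black box. The paper instead proves it explicitly and with less machinery: by \eqref{eq:bound on lap} the Dirichlet Laplacians $\Delta_{x_i,R}h_t^{(x_i,R)}f_i$ are bounded in $L^2$, passing to the limit in the weak formulation (using Lemma~\ref{lem:exist app} to manufacture test functions and Theorem~\ref{thm:cont_reco}) identifies the $L^2$-weak limit as $\Delta_{x,R}h_t^{(x,R)}f$ on $B_R(x)$, and then the identity $\int\Gamma_i(h_t^{(x_i,R)}f_i)\,\di\meas_i=-\int h_t^{(x_i,R)}f_i\,\Delta_{x_i,R}h_t^{(x_i,R)}f_i\,\di\meas_i$ passes to the limit because one factor converges $L^2$-strongly and the other $L^2$-weakly. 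This buys self-containedness: it relies only on results already stated in the paper, whereas your version requires the reader to accept that the variable-measure Mosco stability theorem of \cite{GigliMondinoSavare13,AmbrosioHonda} already contains the energy-convergence conclusion and that it transfers verbatim to the local Dirichlet energies $\Ch_{(x,R)}$. Step~(2), via Theorem~\ref{thm:cont_reco}(ii), is identical in both.

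Two caveats on your self-contained alternative via resolvents. First, the Euler--Lagrange identity $J^i_\lambda f_i-\lambda\Delta_{x_i,R}J^i_\lambda f_i=f_i$ involves the Dirichlet Laplacian $\Delta_{x_i,R}$, and a function in $H^{1,2}_0(B_R(x_i))\cap\mathcal D(\Delta_{x_i,R})$ need not belong to the domain of the global $\Delta_i$; so Theorem~\ref{thm:lap} does not apply as stated, and you must instead argue through the local version (Theorem~\ref{thm:stability lap}) or replicate the paper's ``weak formulation plus integration by parts'' step. Second, Lemma~\ref{lem:sci} only provides the $\liminf$ half; the $\limsup$ half is the nontrivial part and needs the Laplacian bound just as in the paper, so the ``sandwich'' is not yet an argument.
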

\begin{proof} Since Theorem \ref{thm:equiv} and \eqref{eq:generic} ensure Mosco convergence of the local Cheeger energies, 
it follows from a standard argument that $h_t^{(x_i, R)}f_i$ $L^2$-strongly converge to $h_t^{(x, R)}f$ (c.f. \cite{GigliMondinoSavare13, KuwaeShioya03}).
On the other hand, taking (\ref{eq:bound on lap}) into account, we can pass to the limit as $i\to\infty$ in
$$
\int_{B_R(x_i)}\Gamma_i(g_i,h_t^{(x_i,R)}f_i)\dist\meas_i
=-\int_{B_R(x_i)}g_ih_t^{(x_i,R)}f_i\dist\meas_i\qquad g_i\in H^{1,2}_0(B_R(x_i),\dist_i,\meas_i)
$$
to obtain that $\Delta_{x_i,R} h_t^{(x_i, R)}f_i$ $L^2$-weakly converge to $\Delta_{x,R} h_t^{(x, R)}f$ on $B_R(x)$.
Thus 
\begin{align}\label{eq:oo}
\int_{X_i}\Gamma_i(h^{(x_i, R)}_tf_i)\dist \meas_i&=\int_{B_R(x_i)}\Gamma_i(h^{(x_i, R)}_tf_i)\dist \meas_i \nonumber \\
&=\int_{B_R(x_i)}h_t^{(x_i, R)}f_i\Delta_{x_i,R} h_t^{(x_i, R)}f_i\dist \meas_i \nonumber \\
&\to \int_{B_R(x)}h_t^{(x, R)}f\Delta_{x_i,R} h_t^{(x, R)}f\dist \meas \nonumber \\
&=\int_{B_R(x)}\Gamma(h_t^{(x, R)}f)\dist \meas=\int_{X}\Gamma(h_t^{(x, R)}f)\dist \meas,
\end{align}
which shows (1). Property (2) follows from (1) and Theorem \ref{thm:cont_reco}.
\end{proof}

\section{Harmonic replacements and nonhomogeneous boundary conditions}\label{sec:3}

From now on we consider a pointed mGH-convergent sequence of $\RCD^*(K, N)$-spaces, with $N>1$: 
$$
(X_i, x_i, \dist_i, \meas_i) \stackrel{mGH}{\to} (X, x, \dist, \meas).
$$

In this setting, the notions of weak/strong $L^p$ convergence on balls will be used in this final
section. These concepts derive immediately from the global ones by multiplying by characteristic functions, for instance
we say that $f_i\in L^2(X_i,\meas_i)$ $L^2$-weakly converge to $f\in L^2(X,\meas)$ on $B_R(x)$ if
$f_i1_{B_R(x_i)}$ $L^2$-weakly converge to $f1_{B_R(x)}$.

\begin{definition}[Local $H^{1, 2}$-convergence]
We say that $f_i \in H^{1, 2}(B_R(x_i), \dist_i, \meas_i)$ are weakly convergent in $H^{1, 2}$ to 
$f \in H^{1, 2}(B_R(x), \dist, \meas)$ on $B_R(x)$ if $f_i$ are $L^2$-weakly (or $L^2$-strongly, equivalently) 
convergent to $f$ on $B_R(x)$ with $\sup_i\|f_i\|_{H^{1, 2}}<\infty$.
Strong convergence in $H^{1, 2}$ on $B_R(x)$ is defined by requiring 
$\lim_i\|\Gamma_i(f_i)\|_{L^1(B_R(x_i))}=\|\Gamma(f)\|_{L^1(B_R(x))}$.
\end{definition}

\begin{theorem}[Compactness of local Sobolev functions]\label{thm:compact loc sob}
Let $R>0$ and let $f_i \in H^{1, 2}(B_{R}(x_i), \dist_i, \meas_i)$ with $\sup_i\|f_i\|_{H^{1, 2}}<\infty$.
Then there exist $f \in H^{1, 2}(B_{R}(x), \dist, \meas)$ and a subsequence $f_{i(j)}$ such that $f_{i(j)}$ $L^2$-strongly converge to $f$ on $B_R(x)$
and  
$$
\liminf_{j \to \infty}\int_{B_R(x_{i(j)})}\Gamma_{i(j)}(f_{i(j)})\dist \meas_{i(j)} \ge \int_{B_R(x)}\Gamma(f)\dist \meas.
$$  
\end{theorem}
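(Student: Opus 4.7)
The plan is to construct $f$ by exhausting $B_R(x)$ with interior balls $B_{R-2/k}(x)$, extracting a subsequence via good cut-offs and the global stability results, and then verifying $f\in H^{1,2}(B_R(x),\dist,\meas)$ together with the semicontinuity estimate. For each $k\ge 1$ I take good cut-offs $\phi_{k,i}\in\mathcal{D}(\Delta_i)$ from \eqref{eq:good cf}, equal to $1$ on $B_{R-2/k}(x_i)$, supported in $B_{R-1/k}(x_i)$, with $0\le\phi_{k,i}\le 1$ and $\Gamma_i(\phi_{k,i})+|\Delta_i\phi_{k,i}|\le C_k$. The globalizations $h_{k,i}:=\phi_{k,i}f_i$ lie in $\hat{H}^{1,2}_0(B_{R-1/k}(x_i),\dist_i,\meas_i)$ with $\sup_i\|h_{k,i}\|_{H^{1,2}}<\infty$ by the chain rule. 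Proposition~\ref{prop:compact} and Theorem~\ref{thm:Mosco}(a), together with a diagonal extraction in $k$, produce a single subsequence $i(j)$ along which $h_{k,i(j)}\to h_k$ strongly in $L^2$ for every $k$. The identity $h_{k,i(j)}=h_{k',i(j)}=f_{i(j)}$ on $B_{R-2/\min(k,k')}(x_{i(j)})$ (where both cut-offs equal $1$) passes to the $L^2$-strong limit and gives $h_k\equiv h_{k'}$ there, so $f:B_R(x)\to\mathbb{R}$ defined by $f:=h_k$ on $B_{R-2/k}(x)$ is unambiguous.

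Strong $L^2$-convergence of $f_{i(j)}$ to $f$ on every $B_r(x)$ with $r<R$ is immediate since $f_{i(j)}\equiv h_{k,i(j)}$ there for $k$ large. To upgrade to $L^2$-strong convergence on the whole $B_R(x)$, I bound the tail $\|f_{i(j)}\|_{L^2(B_R(x_{i(j)})\setminus B_r(x_{i(j)}))}$ uniformly in $j$ via the Sobolev inequality \eqref{eq:sobolev ineq} (uniform $L^{2^*}$-control on $f_{i(j)}$) and the thin-annulus estimate \eqref{eq:annulus}, which together force the tail to vanish as $r\uparrow R$. For Definition~\ref{def:reddu}(i), any $\psi\in\mathrm{LIP}_c(B_R(x),\dist)$ has $\supp\psi\subset B_{R-2/k}(x)$ for some $k$, so $\psi f=\psi h_k\in H^{1,2}(X,\dist,\meas)$; property (ii) will follow from the liminf inequality below.

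For the liminf inequality I apply Mosco weak-liminf (Theorem~\ref{thm:Mosco}(a)) to $h_{k,i(j)}\to h_k$ and use the locality of $\Gamma$: since $\Gamma(h_k)=\Gamma(f)$ on $B_{R-2/k}(x)$ and $\Gamma_{i(j)}(h_{k,i(j)})=\Gamma_{i(j)}(f_{i(j)})$ on $B_{R-2/k}(x_{i(j)})$ (as $\phi_{k,i(j)}\equiv 1$ there), splitting the global inequality between the interior ball and the thin annulus $A_{k,j}:=B_{R-1/k}(x_{i(j)})\setminus B_{R-2/k}(x_{i(j)})$ yields
\begin{equation*}
\int_{B_{R-2/k}(x)}\Gamma(f)\,d\meas\le\liminf_j\int_{B_R(x_{i(j)})}\Gamma_{i(j)}(f_{i(j)})\,d\meas_{i(j)}+E_k,
\end{equation*}
with $E_k$ the annulus contribution coming from $\int_{A_{k,j}}\Gamma_{i(j)}(h_{k,i(j)})$. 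Letting $k\to\infty$ and invoking monotone convergence on the left then closes the argument, and simultaneously establishes Definition~\ref{def:reddu}(ii), provided $E_k\to 0$. This last step is the main obstacle: the chain rule yields $E_k\lesssim C_k\limsup_j\int_{A_{k,j}}f_{i(j)}^2$ in which $C_k\sim k^{2}$ must be tamed against the decay of $\int_{A_{k,j}}f_{i(j)}^2$ furnished by \eqref{eq:sobolev ineq}, \eqref{eq:annulus}, and the strong $L^2$-convergence of $f_{i(j)}$. I expect the right way to close this is a Young-type splitting with parameter $\delta_k\downarrow 0$ combined with the integration-by-parts identity $\int\Gamma(\phi f)=\int\phi^2\Gamma(f)-\int f^2\phi\Delta\phi$, which replaces $\Gamma(\phi)$ by $\Delta\phi$ (also bounded by \eqref{eq:good cf}) and absorbs the principal term into $(1+o_k(1))\liminf_j\int_{B_R(x_{i(j)})}\Gamma_{i(j)}(f_{i(j)})$.
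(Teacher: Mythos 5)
Your handling of the $L^2$-strong convergence on the full ball (via the $L^{2^*}$-bound from \eqref{eq:sobolev ineq}, the thin-annulus estimate \eqref{eq:annulus}, and cut-offs plus Theorem~\ref{thm:Mosco}) matches the paper's argument, as does the verification of Definition~\ref{def:reddu}(i). The gap is precisely where you flag it: the $\liminf$ inequality. With good cut-offs that equal $1$ on $B_{R-2/k}(x_i)$ and are supported in $B_{R-1/k}(x_i)$, the constant $C_k$ from \eqref{eq:good cf} necessarily scales like $k^2$, while H\"older with the $L^{2^*}$-bound only gives $\int_{A_{k,j}}f_{i(j)}^2\lesssim\meas_{i(j)}(A_{k,j})^{1-2/2^*}\lesssim k^{-(1-2/2^*)}$; since $1-2/2^*<1<2$ for every $N$, the product $C_k\int_{A_{k,j}}f_{i(j)}^2$ diverges. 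A Young splitting with $\delta_k\downarrow 0$ produces a factor $(1+\delta_k^{-1})$ that makes this \emph{worse}, and the integration-by-parts identity merely trades $\Gamma_i(\phi_{k,i})$ for $\phi_{k,i}\Delta_i\phi_{k,i}$, which has the same $\sim k^2$ magnitude by \eqref{eq:good cf}. So neither proposed fix touches the scaling obstruction, and as written the argument does not close.

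The paper avoids this entirely by \emph{not} sending the cut-off width to zero. It fixes $\phi$ with $\phi\equiv 1$ on a fixed ball $B_s(x)$, $s<R$, so that the cut-off derivative never enters, and localizes instead by testing against the characteristic function of an arbitrary open $A\Subset\{\phi\equiv 1\}$. After extracting a subsequence so that $\sqrt{\Gamma_i(f_i)}$ $L^2$-weakly converges on $B_R(x)$ to some $g$, Lemma~\ref{lem:sci} applied to $\phi f_i\to\phi f$ with that localizer gives $\int_Ag\,\dist\meas\ge\int_A\sqrt{\Gamma(f)}\,\dist\meas$ for every such $A$, hence $g\ge\sqrt{\Gamma(f)}$ $\meas$-a.e.; then $\liminf_j\int\Gamma_{i(j)}(f_{i(j)})\ge\|g\|_{L^2(B_R(x))}^2\ge\int_{B_R(x)}\Gamma(f)\,\dist\meas$ by weak lower semicontinuity of the $L^2$-norm, and $\Gamma(f)\le g^2\in L^1$ gives Definition~\ref{def:reddu}(ii) simultaneously. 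If you want to salvage the cut-off route, the correct use of the IBP identity is not to absorb an error term but to show, for each \emph{fixed} $\epsilon$, that the cross term $\int\phi_{\epsilon,j}f_j^2\Delta_j\phi_{\epsilon,j}\dist\meas_j$ converges exactly to $\int\phi_\epsilon f^2\Delta\phi_\epsilon\dist\meas$ (writing the integrand as the pairing of the $L^2$-strong sequence $f_j$ against the $L^2$-weak, $L^\infty$-bounded sequence $f_j\phi_{\epsilon,j}\Delta_j\phi_{\epsilon,j}$), and only then send $\epsilon\downarrow 0$ by monotone convergence; that extra convergence is precisely what your sketch does not prove.
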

\begin{proof}  For any $r<R$, fix $\phi^r\in \mathrm{LIP}_c(B^Y_R(x), \dist)$ with
$0 \le \phi^r\le 1$ and that $\phi^r\equiv 1$ on $B^Y_r(x)$.

We first check the $L^2$-strong compactness. Although it was proven in \cite{Honda2}, we give another proof here for 
the reader's convenience. Note that by (\ref{eq:sobolev ineq}) we have 
\begin{equation}\label{eq:equi bound}
\sup_i\|f_i\|_{L^{2^*}(B_R(x_i))}<\infty.
\end{equation}
Thus, by the $L^p$-weak compactness, with no loss of generality we can assume that $f_i$ $L^2$-weakly converge 
to $f \in L^{2^*}(B_R(x), \meas)$ on $B_R(x)$.

Since $\phi^rf_i$ $L^2$-weakly converge to $\phi^rf$ with $\sup_i\|\phi^rf\|_{H^{1, 2}}<\infty$, 
Theorem \ref{thm:Mosco} yields that $\phi^rf_i$ $L^2$-strongly converge to $\phi^rf$, 
and that $\phi^rf \in H^{1, 2}(X, \dist, \meas)$. Since $r<R$ is arbitrary, the second property easily implies
$$
\phi f \in H^{1, 2}(X, \dist, \meas)\qquad \forall \phi \in \mathrm{LIP}_c(B_R(x), \dist).
$$
On the other hand, the H\"older inequality shows 
$$
\|\phi^rf_i-f_i\|_{L^2(B_R(x_i))} \le \| 1_{B_R(x_i) \setminus B^Y_r(x)}f_i\|_{L^2(B_R(x_i))} \le 
\meas_i(B^Y_R(x_i) \setminus B^Y_r(x))^{1-2/2^*}\|f_i\|_{L^{2^*}(B_R(x_i))}^2.
$$
Since $\limsup_i\meas_i(B^Y_R(x_i) \setminus B^Y_r(x))\leq\meas(\overline{B}^Y_R(x) \setminus B^Y_r(x))$, 
combining this with (\ref{eq:annulus}) and (\ref{eq:equi bound}) yields 
\begin{equation}\label{eq:conv l2 norm}
\lim_{i \to \infty}\|f_i\|_{L^2(B_R(x_i))}=\|f\|_{L^2(B_R(x))},
\end{equation}
i.e.  $f_i$ $L^2$-strongly converge to $f$ on $B_R(x)$.

Assuming with no loss of generality that $\sqrt{\Gamma_i(f_i)}$ $L^2$-weakly converge to $g\in L^2(B_R(x),\meas)$ in $B_R(x)$
(i.e. the measures $1_{B_R(x_i)}\sqrt{\Gamma_i(f_i)}\meas_i$ weakly converge to
$g\meas$), if we 
prove that $g\geq\sqrt{\Gamma(f)}$, we are done. Fix an open set $A\Subset B_R^Y(x)$, then $s\in (0,R)$ with
$A\Subset B_s^Y(x)$ and $\phi\in \mathrm{LIP}_c(B_s^Y(x), \dist)$ with $0 \le \phi \le 1$ and
$\phi\vert_{B_s^Y(x)}\equiv 1$.
Since the sequence $f_i\phi$ converges to $f\phi$ weakly in $H^{1,2}$ on $X$, from Lemma~\ref{lem:sci} with
$g$ equal to the characteristic function of $A$ we get
$$
\int_{\overline{A}}g\dist\meas\geq\limsup_i\int_A\sqrt{\Gamma_i(f_i)}\dist\meas_i\geq\int_A\sqrt{\Gamma(f)}\dist\meas.
$$
Now, since any open set $B\subset B_R^Y(x)$ can be written as $\cup_i A_i$, with $A_i\Subset A_{i+1}$, the
previous inequality gives $\int_Bg\dist\meas\geq\int_B\sqrt{\Gamma(f)}\dist\meas$. Since $B$ is arbitrary,
this proves the inequality $g\geq\sqrt{\Gamma(f)}$.
\end{proof}

\begin{corollary}\label{cor:conv}
Let $f_i, \,g_i  \in H^{1, 2}(B_R(x_i), \dist_i, \meas_i)$ be $H^{1, 2}$-strong/weak convergent sequences to 
$f, \,g \in H^{1, 2}(B_R(x), \dist, \meas)$ on $B_R(x)$, respectively. Then 
\begin{equation}\label{eq:6}
\int_{B_R(x_i)}\Gamma_i( f_i, g_i) \dist \meas_i \to \int_{B_R(x)}\Gamma(f,g) \dist \meas.
\end{equation}
In particular, $f_i \pm g_i$ converge strongly to $f \pm g$ in $H^{1, 2}$ on $B_R(x)$ if $g_i$ is also $H^{1, 2}$-strong convergent on $B_R(x)$.
\end{corollary}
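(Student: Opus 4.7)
The plan is to reduce the statement to a global $H^{1,2}$ claim via a cutoff, then expand with the Leibniz rule and appeal to Theorem~\ref{thm:cont_reco} term by term. For each $r\in(0,R)$ close to $R$, I would use the good cutoff \eqref{eq:good cf} to produce $\phi_i\in\mathcal{D}(\Delta_i)$ with $0\le\phi_i\le 1$, $\phi_i\equiv 1$ on $B_r(x_i)$, $\supp\phi_i\Subset B_R(x_i)$, and uniform bounds $\Gamma_i(\phi_i)+|\Delta_i\phi_i|\le C(K,N,r,R)$. Theorem~\ref{thm:lap} and the Sobolev-to-Lipschitz property then give, up to a subsequence, $\phi_i\to\phi$ strongly in the global $H^{1,2}$, with $\phi\in\mathrm{LIP}_c(B_R(x),\dist)$ satisfying $\phi\equiv 1$ on $B_r(x)$. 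Using $\sup_i\|\Gamma_i(\phi_i)\|_{L^\infty}<\infty$ and the $L^{2^*}$-bound coming from \eqref{eq:sobolev ineq}, the products $\phi_if_i,\phi_ig_i$ lie in $H^{1,2}(X_i,\dist_i,\meas_i)$ globally and converge respectively to $\phi f$ strongly and to $\phi g$ strongly or weakly, according to the hypothesis on $g_i$.

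Next, I would decompose
\begin{equation*}
\int_{B_R(x_i)}\Gamma_i(f_i,g_i)\dist\meas_i = \int_{X_i}\phi_i^2\,\Gamma_i(f_i,g_i)\dist\meas_i + \int_{B_R(x_i)\setminus B_r(x_i)}(1-\phi_i^2)\,\Gamma_i(f_i,g_i)\dist\meas_i.
\end{equation*}
The annular term is bounded via Cauchy--Schwarz by $\|\Gamma_i(f_i)\|_{L^1(B_R\setminus B_r)}^{1/2}\|\Gamma_i(g_i)\|_{L^1(B_R)}^{1/2}$; the strong convergence of $f_i$ combined with the open-set lower semicontinuity developed in the proof of Theorem~\ref{thm:compact loc sob} yields $\limsup_i\|\Gamma_i(f_i)\|_{L^1(B_R\setminus B_r)}\le\|\Gamma(f)\|_{L^1(B_R\setminus B_r)}\to 0$ as $r\uparrow R$, while $\|\Gamma_i(g_i)\|_{L^1(B_R)}$ stays uniformly bounded. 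For the main term, the Leibniz identity gives
\begin{equation*}
\phi_i^2\,\Gamma_i(f_i,g_i) = \Gamma_i(\phi_if_i,\phi_ig_i) - f_i\,\Gamma_i(\phi_i,\phi_ig_i) - g_i\,\Gamma_i(\phi_i,\phi_if_i) + f_ig_i\,\Gamma_i(\phi_i),
\end{equation*}
where every term on the right involves only globally $H^{1,2}$ quantities. The term $\int f_ig_i\,\Gamma_i(\phi_i)$ is handled using the $L^\infty$-bound and the $L^1$-strong convergence of $\Gamma_i(\phi_i)$ (Theorem~\ref{thm:cont_reco}(ii) applied to $\phi_i$) together with the $L^2$-convergences of $f_i$ and $g_i$. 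The two cross terms have $\Gamma_i(\phi_i,\cdot)\in L^2$ uniformly, thanks to the uniform $L^\infty$ bound on $\Gamma_i(\phi_i)$, so Theorem~\ref{thm:cont_reco}(i) with $p=2$ gives $L^2$-weak convergence, whose pairing with the $L^2$-strong sequences $f_i,g_i$ gives the required convergence of the integrals. Finally, $\int\Gamma_i(\phi_if_i,\phi_ig_i)\dist\meas_i$ is covered by Theorem~\ref{thm:cont_reco}(ii) in the strong--strong case, and by Theorem~\ref{thm:cont_reco}(i) -- after testing against a fixed $\psi\in\mathrm{LIP}_c(Y)$ equal to $1$ on a neighbourhood of $\supp\phi$ -- in the strong--weak case. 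Combining these limits and letting $r\to R^-$ produces \eqref{eq:6}.

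The ``in particular'' statement then follows by polarization: when $g_i$ too converges strongly, $\int\Gamma_i(f_i\pm g_i)\dist\meas_i = \int\Gamma_i(f_i)\pm 2\int\Gamma_i(f_i,g_i)\dist\meas_i+\int\Gamma_i(g_i)\to\int\Gamma(f\pm g)\dist\meas$ by the first assertion and the hypotheses, which together with the $L^2$-strong convergence of $f_i\pm g_i$ on $B_R(x)$ gives strong $H^{1,2}$-convergence on $B_R(x)$. The main anticipated obstacle is the strong--weak case of $\int\Gamma_i(\phi_if_i,\phi_ig_i)$, whose natural a priori bound is only $L^1$; if the cutoff-and-test argument above needs reinforcement, one can regularize $\phi_ig_i$ by the heat flow $h_t^i$, use Theorem~\ref{thm:lap} for the $L^2$-weak convergence of the Laplacians, integrate by parts, and finally send $t\to 0^+$.
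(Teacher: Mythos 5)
Your route is genuinely different from the paper's, and it stalls exactly where you anticipate trouble. The paper's proof is a one-line polarization: for each fixed $t\in\mathbb{R}$ the sequence $f_i+tg_i$ is bounded in $H^{1,2}(B_{R}(x_i),\dist_i,\meas_i)$ and $L^2$-strongly convergent to $f+tg$ on $B_R(x)$, so the lower semicontinuity inequality of Theorem~\ref{thm:compact loc sob} gives $\liminf_i\int_{B_R(x_i)}\Gamma_i(f_i+tg_i)\,\dist\meas_i\geq\int_{B_R(x)}\Gamma(f+tg)\,\dist\meas$. Expanding by bilinearity, the $\Gamma_i(f_i)$-terms converge by strong $H^{1,2}$ convergence and the $\Gamma_i(g_i)$-terms are uniformly bounded, so letting $t\to 0^{\pm}$ squeezes the cross term and yields \eqref{eq:6}. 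No cutoff, no Leibniz, no appeal to Theorem~\ref{thm:cont_reco}.

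In your argument, the annular estimate, the Leibniz identity, and the three lower-order terms are fine; the gap is the main term $\int_{X_i}\Gamma_i(\phi_i f_i,\phi_i g_i)\,\dist\meas_i$ in the strong--weak case. Theorem~\ref{thm:cont_reco}(i) demands a uniform $L^p$ bound for some $p>1$, which is simply not available for $\Gamma_i(\phi_i f_i,\phi_i g_i)$ when $g_i$ is only weakly $H^{1,2}$ convergent; testing against a fixed $\psi\in\mathrm{LIP}_c(Y)$ does not create that bound. Worse, this term is precisely the \emph{global} analogue of the corollary you are trying to prove, so the cutoff has in effect reduced the claim to itself. Your heat-flow repair does not close the gap either: $\|\Gamma_i(\phi_ig_i-h_t^i(\phi_ig_i))\|_{L^1}$ is only controlled by $\sup_i\|g_i\|_{H^{1,2}}$, which does not force it to vanish uniformly in $i$ as $t\to 0^+$ (spectral mass of $\phi_ig_i$ may escape to higher and higher frequencies), and regularizing the strong factor $\phi_if_i$ instead runs into the same uniformity issue. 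The repair is to apply the polarization trick -- either globally to $\phi_if_i+t\phi_ig_i$ to close your cutoff argument, or, far more economically, directly on $B_R(x)$ as the paper does, which makes the cutoff step unnecessary.
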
 
\begin{proof}
It follows from $\liminf_i\|\Gamma (f_i +t g_i)\|_{L^1(B_R(x_i))} \ge \|\Gamma(f +t g)\|_{L^1(B_R(x))}$ for any $t \in \mathbb{R}$,
by polarization.
\end{proof}

\begin{theorem}[Stability of Laplacian on balls]\label{thm:stability lap}
Let $f_i \in \mathcal{D}(\Delta, B_R(x_i))$ with 
$$
\sup_i(\|f_i\|_{H^{1, 2}(B_R(x_i))}+\|\Delta f_i\|_{L^2(B_R(x))})<\infty,
$$
and let $f$ be the $L^2$-strong limit function of $f$ on $B_R(x)$ (so that, by Theorem~\ref{thm:compact loc sob}, 
$f \in H^{1, 2}(B_R(x), \dist, \meas)$). Then: 
\begin{enumerate}
\item[(1)] $f \in \mathcal{D}(\Delta, B_R(x))$;
\item[(2)] $\Delta_{x,R} f_i$ $L^2$-weakly converge to $\Delta_{x,R} f$ on $B_R(x)$; 
\item[(3)] $\Gamma_i (f_i)$ $L^1$-strongly converge to $\Gamma(f)$ on $B_r(x)$ 
for any $r<R$. 
\end{enumerate}
\end{theorem}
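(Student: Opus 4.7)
The strategy is to reduce to the global stability of the Laplacian (Theorem~\ref{thm:lap}) by localizing with a good cutoff function. For any open set $\Omega\Subset B_R(x)$, I would construct a cutoff pair as follows: using \eqref{eq:good cf} and Theorems~\ref{thm:Mosco}, \ref{thm:lap} as in the proof of Lemma~\ref{lem:exist app}, one obtains $\phi\in\mathcal{D}(\Delta)\cap\mathrm{LIP}_c(B_R(x),\dist)$ and $\phi_i\in\mathcal{D}(\Delta_i)\cap\mathrm{LIP}_c(B_R(x_i),\dist_i)$ with $\phi\equiv 1$ on $\Omega$, $\phi_i\equiv 1$ on a corresponding inner region, uniform $L^\infty$-bounds on $\phi_i$, $\Gamma_i(\phi_i)$, $\Delta_i\phi_i$, and such that $\phi_i$ strongly $H^{1,2}$-converge to $\phi$. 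Formula \eqref{eq:local to global} then gives $\phi_i f_i\in\mathcal{D}(\Delta_i)$ with
$$\Delta_i(\phi_i f_i)=f_i\Delta_i\phi_i+2\Gamma_i(\phi_i,f_i)+\phi_i\Delta_{x_i,R}f_i,$$
uniformly bounded in $L^2(X_i,\meas_i)$ by the hypotheses (using $|\Gamma_i(\phi_i,f_i)|\leq\sqrt{\Gamma_i(\phi_i)}\sqrt{\Gamma_i(f_i)}$). Since $\phi_i f_i$ are supported in $B_R(x_i)$ and converge $L^2$-strongly on $X$ to $\phi f$, Theorem~\ref{thm:lap} yields strong $H^{1,2}$-convergence $\phi_i f_i\to\phi f$ and $L^2$-weak convergence of $\Delta_i(\phi_i f_i)$ to $\Delta(\phi f)$.

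To prove (1) and (2), fix $g\in\mathrm{LIP}_c(B_R(x),\dist)$ and by Lemma~\ref{lem:exist app} take $g_i\in\mathrm{LIP}_c(B_R(x_i),\dist_i)$ strongly $H^{1,2}$-convergent to $g$ with $\sup_i\|\Gamma_i(g_i)\|_{L^\infty}<\infty$; choose the cutoffs so that $\phi\equiv 1$ on a neighborhood of $\supp g$ and, for $i$ large, $\phi_i\equiv 1$ on $\supp g_i$. Then locality gives $\Gamma_i(f_i,g_i)=\Gamma_i(\phi_i f_i,g_i)$ $\meas_i$-a.e.\ and $\Gamma(f,g)=\Gamma(\phi f,g)$ $\meas$-a.e., so Theorem~\ref{thm:cont_reco}(ii) applied to the strongly convergent pair $(g_i,\phi_i f_i)$ yields
$$\int_{B_R(x_i)}\Gamma_i(f_i,g_i)\dist\meas_i\longrightarrow\int_{B_R(x)}\Gamma(f,g)\dist\meas.$$
Along a subsequence, the $L^2$-bounded family $\Delta_{x_i,R}f_i$ converges $L^2$-weakly on $B_R(x)$ to some $h\in L^2(B_R(x),\meas)$; combined with the $L^2$-strong convergence $g_i\to g$, passing to the limit in $\int(\Delta_{x_i,R}f_i)g_i\dist\meas_i=-\int\Gamma_i(f_i,g_i)\dist\meas_i$ produces $\int hg\dist\meas=-\int\Gamma(f,g)\dist\meas$. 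Density of $\mathrm{LIP}_c(B_R(x),\dist)$ in $H^{1,2}_0(B_R(x),\dist,\meas)$ then identifies $h=\Delta_{x,R}f$, establishing (1); uniqueness of the limit promotes the convergence in (2) to the full sequence.

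For (3), fix $r<R$ and now take the cutoffs so that $\phi\equiv 1$ on $B_r(x)$ and $\phi_i\equiv 1$ on $B_r(x_i)$. The strong $H^{1,2}$-convergence of $\phi_i f_i$ established above, together with Theorem~\ref{thm:cont_reco}(ii) (with $v_i=w_i=\phi_i f_i$), gives $L^1$-strong convergence $\Gamma_i(\phi_i f_i)\to\Gamma(\phi f)$ on $X$; by locality $\Gamma_i(\phi_i f_i)=\Gamma_i(f_i)$ on $B_r(x_i)$ and $\Gamma(\phi f)=\Gamma(f)$ on $B_r(x)$, yielding (3). The main technical obstacle I anticipate is the simultaneous construction of the cutoff pair $(\phi,\phi_i)$ with strong $H^{1,2}$-convergence, uniform $L^\infty$-bounds on $\Delta_i\phi_i$, and the required constancy on the prescribed inner region; this is handled by the compactness/diagonal argument already used in the proof of Lemma~\ref{lem:exist app}, combining the existence of uniform good cutoffs \eqref{eq:good cf} with Theorem~\ref{thm:lap}.
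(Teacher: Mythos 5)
Your proof is correct and follows essentially the same approach as the paper: localize with good cutoff functions $\phi_i$ strongly $H^{1,2}$-convergent to $\phi$, apply the global Theorem~\ref{thm:lap} to $\phi_i f_i$ (using \eqref{eq:local to global} for the uniform $L^2$-bound on $\Delta_i(\phi_i f_i)$), and transfer back by locality of $\Gamma$. You spell out the identification $h=\Delta_{x,R}f$ via the test pair $(g,g_i)$ and Theorem~\ref{thm:cont_reco}(ii) more explicitly than the paper, which is useful detail, but the underlying strategy is identical.
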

\begin{proof}
Fix $r<R$.
By using good cut-off functions (\ref{eq:good cf}) for pairs $B_r(x_i) \subset B_{r+(R-r)/2}(x_i)$, with no loss of generality we can assume 
the existence of a strong $H^{1, 2}$-convergent sequence of $\phi_i \in \mathrm{LIP}_c(B_R(x_i), \dist_i) \cap \mathcal{D}(\Delta)$ to $\phi \in \mathrm{LIP}_c(B_R(x), \dist) \cap \mathcal{D}(\Delta)$ with $0 \le \phi_i \le 1$, $|\Delta \phi_i| +\Gamma_i(\phi_i) \le C(K, N, r, R)$ and $\phi_i\vert_{B_r(x_i)}\equiv 1$, $0 \le \phi\le 1$, $|\Delta \phi| +\Gamma(\phi) \le C(K, N, r, R)$ and $\phi\vert_{B_r(x)}\equiv 1$.

Then, applying Theorem~\ref{thm:lap} to $\phi_if_i$ with the $L^2$-weak compactness (on $B_R(x)$) yields that (1) and (2) are satisfied, 
and that $\Gamma_i(\phi_if_i)$ $L^1$-strongly converge to $\Gamma(\phi f)$. This completes the proof of (3), 
because $\Gamma_i(\phi_if_i)=\Gamma_i(f_i)$ $\meas_i$-a.e. on $B_r(x_i)$ and $\Gamma (\phi f)=\Gamma(f)$ 
$\meas$-a.e. on $B_r(x)$. 
\end{proof}

The following is a direct consequence of Theorem~\ref{thm:stability lap}.

\begin{corollary}[Stability of harmonic functions]\label{cor:stab harm}
Let $f_i\in H^{1,2}(B_R(x_i),\dist_i,\meas_i)$ be harmonic functions on $B_R(x_i)$, i.e. $f_i \in \mathcal{D}(\Delta, B_R(x_i))$ 
with $\Delta_{x_i,R} f_i =0$. If $f_i$ $L^2$-strongly converge to $f$ on $B_R(x)$ with $\sup_i\|f_i\|_{H^{1, 2}}<\infty$, 
then $f$ is also harmonic on $B_R(x)$.
\end{corollary}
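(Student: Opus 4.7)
The plan is to apply Theorem~\ref{thm:stability lap} directly to the sequence $(f_i)$. First I would verify that the hypotheses of that theorem are satisfied: by assumption $\sup_i \|f_i\|_{H^{1,2}(B_R(x_i))} < \infty$, and the harmonicity assumption $\Delta_{x_i, R} f_i = 0$ trivially yields $\sup_i \|\Delta_{x_i, R} f_i\|_{L^2(B_R(x_i))} = 0 < \infty$. Together with the $L^2$-strong convergence $f_i \to f$ on $B_R(x)$, this places us exactly in the framework of Theorem~\ref{thm:stability lap}.

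Next I would invoke conclusion (1) of that theorem to obtain $f \in \mathcal{D}(\Delta, B_R(x))$, so that $\Delta_{x, R} f$ is well defined as an element of $L^2(B_R(x), \meas)$. Then conclusion (2) gives that $\Delta_{x_i, R} f_i$ $L^2$-weakly converges to $\Delta_{x, R} f$ on $B_R(x)$. Since the approximating sequence is identically zero and $L^2$-weak limits are unique, I conclude that $\Delta_{x, R} f = 0$ $\meas$-a.e.\ on $B_R(x)$, which is exactly the assertion that $f$ is harmonic on $B_R(x)$.

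There is really no substantive obstacle here, because Theorem~\ref{thm:stability lap} encapsulates all the analytic content (identification of the limit Laplacian and upgrade of $L^2$-strong convergence to $H^{1,2}$-type bounds via good cut-off functions). The only task is the trivial observation that the vanishing of $\Delta_{x_i, R} f_i$ furnishes the required uniform $L^2$-bound on the Laplacians, and that the weak limit of the zero sequence is zero.
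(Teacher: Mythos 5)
Your proposal is correct and follows exactly the paper's route: the paper itself states that this corollary is a direct consequence of Theorem~\ref{thm:stability lap}, applied just as you describe, with the observation that the vanishing of $\Delta_{x_i,R} f_i$ gives the required uniform Laplacian bound and that the weak limit of the zero sequence is zero.
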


\begin{theorem}[Continuity of the local gradient operators]\label{thm:local gradient}
Let $f_i \in H^{1, 2}(B_R(x_i), \dist_i, \meas_i)$ be an $H^{1, 2}$-strong convergent sequence to $f \in H^{1, 2}(B_R(x), \dist, \meas)$ on $B_R(x)$.  
Then we have the following;
\begin{enumerate}
\item[(1)] $\Gamma_i(f_i)$ $L^1$-strongly converge to $\Gamma(f)$ on $B_R(x)$. 
\item[(2)] if $g_i \in H^{1, 2}(B_R(x_i), \dist_i, \meas_i)$ converge weakly to $g \in H^{1, 2}(B_R(x), \dist, \meas)$ in $H^{1, 2}$ on $B_R(x)$, and
if $\Gamma_i(f_i,g_i)$ is uniformly $L^p$-bounded for some $p \in (1, \infty)$, then $\Gamma_i(f_i,g_i)$
$L^p$-weakly converge to $\Gamma( f, g)$ on $B_R(x)$.
\end{enumerate}
\end{theorem}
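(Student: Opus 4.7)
\textbf{Proof plan for Theorem~\ref{thm:local gradient}.}

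For part (1), the plan is to mimic the argument used in the proof of Theorem~\ref{thm:compact loc sob}, exploiting that $H^{1,2}$-strong convergence on $B_R(x)$ is by definition $L^2$-strong convergence together with norm convergence $\int_{B_R(x_i)}\Gamma_i(f_i)\dist\meas_i\to\int_{B_R(x)}\Gamma(f)\dist\meas$. Since $L^1$-strong convergence of nonnegative functions is equivalent to $L^2$-strong convergence of their square roots, it suffices to establish $L^2$-weak convergence of $\sqrt{\Gamma_i(f_i)}$ to $\sqrt{\Gamma(f)}$ on $B_R(x)$, after which the matching of $L^2$-norms upgrades it to strong convergence. Up to a subsequence, $1_{B_R(x_i)}\sqrt{\Gamma_i(f_i)}\meas_i$ converges weakly to some $g\meas$ with $g\in L^2(B_R(x),\meas)$. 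Applying Lemma~\ref{lem:sci} to the truncations $\phi^r f_i$ exactly as in the proof of Theorem~\ref{thm:compact loc sob} gives $g\geq\sqrt{\Gamma(f)}$ $\meas$-a.e.\ on $B_R(x)$; on the other hand, lower semicontinuity of the $L^2$-norm combined with the hypothesized norm convergence yields $\|g\|_{L^2(B_R(x))}^2\leq\int_{B_R(x)}\Gamma(f)\dist\meas$. The two bounds force $g=\sqrt{\Gamma(f)}$ $\meas$-a.e., and by uniqueness of the subsequential limit the full sequence converges.

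For part (2), the plan is to localize via cutoffs and reduce to the global result Theorem~\ref{thm:cont_reco}(i). Fix $r<R$ and, using \eqref{eq:good cf} together with Theorem~\ref{thm:Mosco}, choose $\phi_i\in\mathrm{LIP}_c(B_R(x_i),\dist_i)\cap\mathcal{D}(\Delta_i)$ satisfying $\phi_i\equiv 1$ on $B_r(x_i)$, with uniform $L^\infty$- and $\Gamma_i$-bounds, and strongly $H^{1,2}$-convergent to a limit $\phi$ with $\phi\equiv 1$ on $B_r(x)$. By Definition~\ref{def:reddu}, $\phi_i f_i$ and $\phi_i g_i$ are globally $H^{1,2}$ on $X_i$. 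The key technical step is to verify that $\phi_i f_i\to\phi f$ strongly and $\phi_i g_i\to\phi g$ weakly in $H^{1,2}(X_i)$, and that the uniform $L^q$-boundedness $\sup_i\|\Gamma_i(\phi_i f_i,\phi_i g_i)\|_{L^q}<\infty$ holds for some $q\in (1,\infty)$. This is handled by Leibniz expansion
$$\Gamma_i(\phi_i f_i,\phi_i g_i)=\phi_i^2\Gamma_i(f_i,g_i)+\phi_i f_i\Gamma_i(\phi_i,g_i)+\phi_i g_i\Gamma_i(\phi_i,f_i)+f_i g_i\Gamma_i(\phi_i)$$
(and the analogous expansion for $\Gamma_i(\phi_i f_i)$), using part (1) for $\Gamma_i(f_i)$, the uniform bounds on $\phi_i$, the Sobolev inequality \eqref{eq:sobolev ineq} for uniform $L^{2^*}$-control of $f_i$ and $g_i$, and Cauchy--Schwarz to dominate the cross-terms. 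The gradient cross-terms $\Gamma_i(\phi_i,f_i)$ and $\Gamma_i(\phi_i,g_i)$ are evaluated by replacing $f_i$ and $g_i$ with $\phi_i' f_i, \phi_i' g_i$ where $\phi_i'$ is a second cutoff with $\phi_i'\equiv 1$ on $\mathrm{supp}\,\phi_i$ and supported in $B_R(x_i)$; this makes them globally $H^{1,2}$ so Theorem~\ref{thm:cont_reco}(ii) applies.

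With this setup Theorem~\ref{thm:cont_reco}(i) gives $\Gamma_i(\phi_i f_i,\phi_i g_i)\rightharpoonup\Gamma(\phi f,\phi g)$ $L^q$-weakly on $X_i$. Since $\phi_i\equiv 1$ on $B_r(x_i)$ and $\phi\equiv 1$ on $B_r(x)$, both sides there coincide with $\Gamma_i(f_i,g_i)$ and $\Gamma(f,g)$; hence for every $\eta\in C_c(B_r^Y(x))$,
$$\int\eta\,\Gamma_i(f_i,g_i)\dist\meas_i\longrightarrow\int\eta\,\Gamma(f,g)\dist\meas.$$
Every $\eta\in C_c(B_R^Y(x))$ has support inside some $B_r^Y(x)$ with $r<R$, so this passes to all admissible test functions on $B_R(x)$. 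Combining with the uniform $L^p$-bound on $\Gamma_i(f_i,g_i)$ on $B_R(x_i)$ and a standard density argument upgrades the distributional convergence to $L^p$-weak convergence. The main obstacle is the recursive verification of the strong $H^{1,2}$-convergence of $\phi_i f_i$ globally (the cross-term in its Leibniz expansion forces one to import Theorem~\ref{thm:cont_reco}(ii) for gradients of locally defined functions via a second cutoff); apart from that, the argument is a routine combination of part (1), Mosco convergence, and the Sobolev embedding.
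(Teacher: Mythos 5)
Part (1) of your plan reproduces the paper's argument verbatim: extract a subsequential $L^2$-weak limit $g$ of $\sqrt{\Gamma_i(f_i)}$, use the lower bound $g\geq\sqrt{\Gamma(f)}$ already obtained in the proof of Theorem~\ref{thm:compact loc sob}, and combine it with the opposite norm inequality coming from the strong $H^{1,2}$-convergence hypothesis. Nothing to add there.

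For part (2) you follow the paper's route (cut off, apply Theorem~\ref{thm:cont_reco}(i), restrict to $B_r(x)$, let $r\uparrow R$), but the paper is extremely terse and implicitly assumes that the truncated sequence $\phi^r f_i$ is strongly $H^{1,2}$-convergent; you correctly identify that this needs a Leibniz argument. The issue in your writeup is the ``recursion'' you flag at the end, which stems from invoking Theorem~\ref{thm:cont_reco}(ii) for the cross-term $\Gamma_i(\phi_i,f_i)=\Gamma_i(\phi_i,\phi_i'f_i)$: part~(ii) requires \emph{both} factors to converge strongly in $H^{1,2}$, so you would need the strong convergence of $\phi_i'f_i$ to prove that of $\phi_if_i$, which is indeed circular. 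The fix is to use Theorem~\ref{thm:cont_reco}(i) instead: take $v_i=\phi_i$ (strongly $H^{1,2}$-convergent by the good cutoff construction and Theorem~\ref{thm:lap}) and $w_i=\phi_i'f_i$ (which is only \emph{weakly} $H^{1,2}$-convergent, and that is already guaranteed by Theorem~\ref{thm:compact loc sob} from the hypotheses — no recursion). Since $\Gamma_i(\phi_i,\phi_i'f_i)$ is uniformly $L^2$-bounded (bounded gradient of the cutoff times the $H^{1,2}$-bound), Theorem~\ref{thm:cont_reco}(i) gives $L^2$-weak convergence of the cross-term, and pairing it against the $L^2$-strongly convergent factor $\phi_if_i$ passes the cross-term in the Leibniz expansion to the limit. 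Together with the first term (handled by part (1) of the theorem plus the $L^\infty$-bound on $\phi_i$) and the last term (handled by Theorem~\ref{thm:cont_reco}(ii) applied to the pair $(\phi_i,\phi_i)$ together with $L^2$-strong convergence of $f_i$), this yields $\lim_i\int\Gamma_i(\phi_if_i)\dist\meas_i=\int\Gamma(\phi f)\dist\meas$, i.e.\ the strong $H^{1,2}$-convergence of $\phi_if_i$. After this correction your plan is sound and, in fact, supplies the verification that the paper's one-line proof silently assumes.
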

\begin{proof}
Let us prove (1). As in the proof of Theorem~\ref{thm:compact loc sob}, 
we can assume with no loss of generality that $\sqrt{\Gamma_i(f_i)}$ $L^2$-weakly converge to
some function $g$ in $B_R(x)$, and $H^{1,2}$-strong convergence together with lower semicontinuity
provides, in this case, the inequality $\|g\|_{L^2(B_R(x))}\leq \|\sqrt{\Gamma(f)}\|_{L^2(B_R(x))}$. Since in the proof of 
Theorem~\ref{thm:compact loc sob}
we obtained, using only weak $H^{1,2}$ convergence, the inequality $g\geq\sqrt{\Gamma(f)}$, we obtain that
$g=\sqrt{\Gamma(f)}$, and this proves (1).

Statement (2) can be proved by applying Theorem~\ref{thm:cont_reco}(i) 
to the functions $g_i\phi^r$, $f_i\phi^r$, with $\phi^r$ as in the
proof of Theorem~\ref{thm:compact loc sob}. 
\end{proof}

From now on we discuss the local Dirichlet problem. The next lemma is quite standard.

\begin{lemma}\label{lem:existence}
Let $g \in L^2(B_R(x), \meas)$ and let $f \in H^{1, 2}(B_R(x), \dist, \meas)$.
If $\lambda_1^D(B_R(x))>0$, then there exists a unique $\hat{f} \in \mathcal{D}(\Delta, B_R(x))$ such that 
\begin{equation}
\begin{cases}
\Delta_{x,R}\hat{f}=g\\\\
f-\hat{f} \in H^{1, 2}_0(B_R(x), \dist, \meas).
\end{cases} 
\end{equation}
In addition
\begin{equation}
\label{eq:sol bound}
\|\sqrt{\Gamma(\hat{f})}\|_{L^2(B_R(x))}\leq 2\|\sqrt{\Gamma(f)}\|_{L^2(B_R(x))}+\frac{\|g\|_{L^2(B_R(x))}}{\lambda_1^D(B_R(x))},
\end{equation}
\begin{equation}\label{eq:l2 est}
\|\hat f\|_{L^2(B_R(x))}\leq \|f\|_{L^2(B_R(x))}+\frac{1}{\lambda_1^D(B_R(x))}\|\sqrt{\Gamma(f)}\|_{L^2(B_R(x))}+
\frac{\|g\|_{L^2(B_R(x))}}{(\lambda_1^D(B_R(x)))^2}.
\end{equation}
\end{lemma}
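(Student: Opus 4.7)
The plan is to reduce the problem to a standard variational problem on the Hilbert space $H^{1,2}_0(B_R(x),\dist,\meas)$ and then invoke the Lax--Milgram theorem. Writing $u:=\hat{f}-f$, the two conditions on $\hat{f}$ are equivalent to finding $u\in H^{1,2}_0(B_R(x),\dist,\meas)$ satisfying
$$
a(u,\phi):=\int_{B_R(x)}\Gamma(u,\phi)\,\di\meas = -\int_{B_R(x)}\Gamma(f,\phi)\,\di\meas - \int_{B_R(x)}g\phi\,\di\meas
$$
for every $\phi\in H^{1,2}_0(B_R(x),\dist,\meas)$. The right-hand side is a continuous linear functional on $H^{1,2}_0(B_R(x),\dist,\meas)$ by Cauchy--Schwarz, since $\Gamma(f)\in L^1(B_R(x),\meas)$ by Definition~\ref{def:reddu} and $g\in L^2(B_R(x),\meas)$ by assumption.

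The bilinear form $a$ is symmetric, bounded, and, most importantly, coercive: coercivity follows from the Friedrichs inequality
$$
\|\phi\|_{L^2(B_R(x))}^2\le \frac{1}{\lambda_1^D(B_R(x))}\int_{B_R(x)}\Gamma(\phi)\,\di\meas
\qquad\forall\phi\in H^{1,2}_0(B_R(x),\dist,\meas),
$$
which is just the variational characterization of $\lambda_1^D(B_R(x))$ as the infimum of the Rayleigh quotient on $H^{1,2}_0(B_R(x),\dist,\meas)\setminus\{0\}$; the assumption $\lambda_1^D(B_R(x))>0$ is precisely what ensures that this inequality is nontrivial. Since $H^{1,2}_0(B_R(x),\dist,\meas)$ is a Hilbert space, being a closed subspace of the Hilbert space $H^{1,2}(X,\dist,\meas)$ (infinitesimal Hilbertianity of $\RCD^*$-spaces), Lax--Milgram produces a unique $u$, hence a unique $\hat{f}=f+u$ with the desired properties.

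For the quantitative estimates, I would test the variational identity with $\phi=u$ and use Cauchy--Schwarz to obtain
$$
\int_{B_R(x)}\Gamma(u)\,\di\meas\le \|\sqrt{\Gamma(f)}\|_{L^2(B_R(x))}\|\sqrt{\Gamma(u)}\|_{L^2(B_R(x))}+\|g\|_{L^2(B_R(x))}\|u\|_{L^2(B_R(x))}.
$$
Controlling $\|u\|_{L^2(B_R(x))}$ via Friedrichs and a suitable Young splitting yields a bound on $\|\sqrt{\Gamma(u)}\|_{L^2(B_R(x))}$ in terms of $\|\sqrt{\Gamma(f)}\|_{L^2(B_R(x))}$ and $\|g\|_{L^2(B_R(x))}/\lambda_1^D(B_R(x))$; combining with the triangle inequality $\|\sqrt{\Gamma(\hat f)}\|_{L^2}\le \|\sqrt{\Gamma(f)}\|_{L^2}+\|\sqrt{\Gamma(u)}\|_{L^2}$ produces \eqref{eq:sol bound}, while a further application of Friedrichs to $\|u\|_{L^2(B_R(x))}$ together with $\|\hat f\|_{L^2}\le \|f\|_{L^2}+\|u\|_{L^2}$ gives \eqref{eq:l2 est}. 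I do not foresee any genuine obstacle: the whole argument is the standard Lax--Milgram machinery, with Friedrichs playing the role of Poincar\'e and the hypothesis $\lambda_1^D(B_R(x))>0$ being exactly what is needed to make it run.
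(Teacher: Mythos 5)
Your argument is correct and is essentially the same as the paper's: both reduce to finding $u=\hat f-f\in H^{1,2}_0(B_R(x),\dist,\meas)$ solving a variational identity, use the Poincar\'e inequality coming from $\lambda_1^D(B_R(x))>0$ to see that $\int_{B_R(x)}\Gamma(\cdot)\,\di\meas$ gives an equivalent Hilbertian norm on $H^{1,2}_0$, and then invoke Hilbert-space solvability (the paper phrases it as Riesz representation for the functional $\mathcal F(h)=-\int(gh+\Gamma(f,h))\,\di\meas$, which in this symmetric, coercive setting is the same as your Lax--Milgram step). The estimates follow just as you outline, by testing with $u$, applying Friedrichs to $\|u\|_{L^2}$, and using the triangle inequality $\|\sqrt{\Gamma(\hat f)}\|_{L^2}\le\|\sqrt{\Gamma(f)}\|_{L^2}+\|\sqrt{\Gamma(u)}\|_{L^2}$.
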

\begin{proof} Since 
$$
\lambda_1^D(B_R(x))\int_{B_R(x)}|h|^2\dist \meas \le \int_{B_R(x)}\Gamma(h)\dist \meas\qquad\forall h \in H^{1, 2}_0(B_R(x), \dist, \meas),
$$
it follows that $\sqrt{\int_{B_R(x)}\Gamma(\cdot ) \dist \meas}$ is an equivalent norm on $H^{1, 2}_0(B_R(x), \dist, \meas)$.
Let us consider the linear functional $\mathcal{F}$ on $H^{1, 2}_0(B_R(x), \dist, \meas)$ defined by
$$
\mathcal{F}(h):=-\int_{B_R(x)}(gh +\Gamma(f,h)) \dist \meas.
$$
Then since $\mathcal{F}$ is continuous with $\|\mathcal F\|\leq \|\sqrt{\Gamma(f)}\|_{L^2}+\|g\|_{L^2}/\lambda_1^D(B_R(x))$, 
there exists a unique $\phi \in H^{1, 2}_0(B_R(x), \dist, \meas)$ such that $\mathcal{F}(h)=\int_{B_R(x)}\Gamma( \phi,  h) \dist \meas$, i.e.
$$
-\int_{B_R(x)}(gh +\Gamma( f, h ))\dist \meas=\int_{B_R(x)}\Gamma( \phi,  h ) \dist \meas
\qquad \forall h \in H^{1, 2}_0(B_R(x), \dist, \meas).
$$
Letting $\hat{f}:=f+\phi$ and using the inequality $\|\sqrt{\Gamma(\phi)}\|_{L^2}\leq \|\sqrt{\Gamma(f)}\|_{L^2}+\|g\|_{L^2}/\lambda_1^D(B_R(x))$
completes the proof of the existence and the apriori estimates. Uniqueness is a simple consequence of linearity and apriori estimates.
\end{proof}

In the particular case when $g=0$, the function $\hat{f}$ provided by the previous lemma will be called
\textit{harmonic replacement} of $f$. 

\begin{theorem}[Continuity of the local Dirichlet problem]\label{thm:conv stab}
Let $f_i \in H^{1, 2}(B_R(x_i), \dist_i, \meas_i)$ be a weak $H^{1, 2}$-convergent sequence to $f \in H^{1, 2}(B_R(x), \dist, \meas)$ on $B_R(x)$, and let $g_i \in L^2(B_R(x_i), \meas_i)$ be an $L^2$-weak convergent sequence to $g \in L^2(B_R(x), \meas)$ on $B_R(x)$.
Assume that $\lambda_1^D(B_R(x))>0$ and that 
$$
H^{1, 2}_0(B_R(x), \dist, \meas)=\hat{H}^{1, 2}_0(B_R(x), \dist, \meas).
$$
Then the solutions $\hat{f}_i \in \mathcal{D}(\Delta_i, B_R(x_i))$ of the Dirichlet problems, $\Delta_{x_i,R} \hat{f}_i=g_i$ with $f_i-\hat{f}_i \in H^{1, 2}_0(B_R(x_i), \dist_i, \meas_i)$,  converge weakly in $H^{1,2}$ to the solution $\hat{f}$ of the Dirichlet problem, $\Delta_{x,R} \hat{f}=g$ with $f-\hat{f} \in H^{1, 2}_0(B_R(x), \dist, \meas)$.
Moreover the convergence is strong in $H^{1, 2}$ if and only if 
\begin{equation}\label{eq:equiv strong}
\int_{B_R(x_i)}\Gamma_i(\hat{f}_i,f_i) \dist \meas_i \to \int_{B_R(x)}\Gamma( \hat{f}, f) \dist \meas.
\end{equation}
Finally (\ref{eq:equiv strong}) is satisfied if either $\sup_i\|\Gamma( f_i)\|_{L^p(B_R(x_i))}<\infty$ for some $p>1$, or 
$f_i \in H^{1, 2}_0(B_R(x_i), \dist_i, \meas_i)$ for all $i$.
\end{theorem}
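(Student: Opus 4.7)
The plan is to proceed in three stages: extract a weak $H^{1,2}$-limit and identify it as $\hat f$, reduce strong convergence to $(\ref{eq:equiv strong})$, and verify $(\ref{eq:equiv strong})$ under each sufficient condition.

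First I would obtain uniform a priori bounds and identify the weak limit. Under the assumption $H^{1,2}_0(B_R(x),\dist,\meas)=\hat H^{1,2}_0(B_R(x),\dist,\meas)$, Theorem~\ref{thm:equiv} together with Proposition~\ref{prop:equiv} yields $\lambda_1^D(B_R(x_i))\to\lambda_1^D(B_R(x))>0$, hence a uniform positive lower bound on the first Dirichlet eigenvalues. Lemma~\ref{lem:existence} then gives $\sup_i(\|\hat f_i\|_{H^{1,2}(B_R(x_i))}+\|\Delta_{x_i,R}\hat f_i\|_{L^2})<\infty$, and Theorem~\ref{thm:compact loc sob} extracts a subsequence converging $L^2$-strongly on $B_R(x)$ to some $\hat f^*\in H^{1,2}(B_R(x),\dist,\meas)$ with $\Delta_{x,R}\hat f^*=g$ by Theorem~\ref{thm:stability lap}. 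For the boundary condition, I would view $\hat f_i-f_i\in H^{1,2}_0(B_R(x_i))\subset H^{1,2}(X_i,\dist_i,\meas_i)$ as a bounded sequence in the global $H^{1,2}$ vanishing $\meas_i$-a.e.\ outside $B_R(x_i)$, so Proposition~\ref{prop:compact} places the limit in $\hat H^{1,2}_0(B_R(x),\dist,\meas)=H^{1,2}_0(B_R(x),\dist,\meas)$; uniqueness in Lemma~\ref{lem:existence} identifies $\hat f^*=\hat f$, and a standard subsequence argument promotes this to convergence of the whole sequence.

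For the equivalence with $(\ref{eq:equiv strong})$, I would test the Dirichlet equation for $\hat f_i$ against the admissible function $\hat f_i-f_i\in H^{1,2}_0(B_R(x_i))$ to obtain
\[
\int_{B_R(x_i)}\Gamma_i(\hat f_i)\dist\meas_i = \int_{B_R(x_i)}\Gamma_i(\hat f_i,f_i)\dist\meas_i - \int_{B_R(x_i)}g_i(\hat f_i-f_i)\dist\meas_i,
\]
and the analogous identity at the limit. Since $g_i$ $L^2$-weakly converges to $g$ on $B_R(x)$ while $\hat f_i-f_i$ $L^2$-strongly converges to $\hat f-f$ globally (by the weak-$\liminf$ part of Theorem~\ref{thm:Mosco}), the last term passes to its limit counterpart, and the asserted equivalence drops out.

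Finally, I would verify $(\ref{eq:equiv strong})$ in the two cases. If $f_i\in H^{1,2}_0(B_R(x_i))$, testing $\Delta_{x_i,R}\hat f_i=g_i$ with $f_i$ gives $\int\Gamma_i(\hat f_i,f_i)\dist\meas_i=-\int g_if_i\dist\meas_i\to-\int gf\dist\meas=\int\Gamma(\hat f,f)\dist\meas$ immediately from $L^2$-weak/strong convergence. In the $L^p$-gradient case, the plan is to split into an interior part on $B_r(x)$ with $r<R$ and a thin-annulus part: Theorem~\ref{thm:stability lap}(3) provides strong $H^{1,2}$-convergence of $\hat f_i$ to $\hat f$ on each such $B_r(x)$; Cauchy--Schwarz combined with H\"older applied to $\sqrt{\Gamma_i(\hat f_i)}\in L^2$ and $\sqrt{\Gamma_i(f_i)}\in L^{2p}$ produces a uniform $L^q(B_R(x_i))$-bound on $\Gamma_i(\hat f_i,f_i)$ with $q:=2p/(p+1)>1$; Theorem~\ref{thm:local gradient}(2) applied on $B_r(x)$ then yields $\int_{B_r(x_i)}\Gamma_i(\hat f_i,f_i)\dist\meas_i\to\int_{B_r(x)}\Gamma(\hat f,f)\dist\meas$. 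The annular contributions are controlled by H\"older and the thin-annulus volume estimate $(\ref{eq:annulus})$, which gives
\[
\Bigl|\int_{B_R(x_i)\setminus B_r(x_i)}\Gamma_i(\hat f_i,f_i)\dist\meas_i\Bigr|\le C\,\meas_i(B_R(x_i)\setminus B_r(x_i))^{1-1/q}\longrightarrow 0
\]
uniformly in $i$ as $r\uparrow R$, precisely because $q>1$; the limit annular integral vanishes by absolute continuity of $\Gamma(\hat f,f)\in L^1(B_R(x))$, bounded by $\|\sqrt{\Gamma(\hat f)}\|_{L^2}\|\sqrt{\Gamma(f)}\|_{L^2(B_R\setminus B_r)}$. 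The hard part is this interior/boundary split in Case~1: it is precisely where the hypothesis $p>1$ is genuinely used, converting the $L^1$-level boundary loss into a quantitative H\"older decay against the uniform thin-annulus volume estimate.
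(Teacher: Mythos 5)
Your argument is correct and follows essentially the same route as the paper's: a priori bounds from Lemma~\ref{lem:existence} (with the uniform eigenvalue lower bound supplied by the spectral convergence) plus Theorems~\ref{thm:compact loc sob}, \ref{thm:stability lap} and the $H^{1,2}_0=\hat H^{1,2}_0$ assumption to extract and identify the weak $H^{1,2}$-limit; the integration-by-parts identity relating $\|\Gamma_i(\hat f_i)\|_{L^1}$ to $\int\Gamma_i(\hat f_i,f_i)-\int g_i(\hat f_i-f_i)$ for the strong-convergence equivalence; and, in the $L^p$-gradient case, the interior/annulus split driven by a uniform $L^q$ bound on $\Gamma_i(\hat f_i,f_i)$ with $q>1$, closed up via Theorems~\ref{thm:stability lap}, \ref{thm:local gradient} and the thin-annulus estimate \eqref{eq:annulus}. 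The only cosmetic deviation is that you obtain $q=2p/(p+1)$ by Cauchy--Schwarz and H\"older, whereas the paper reaches the same $L^q$-bound via Young's inequality.
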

\begin{proof}
It is easy to check that (\ref{eq:sol bound}) and (\ref{eq:l2 est}) give $\sup_i\|\hat{f}_i\|_{H^{1, 2}}<\infty$.
Thus, by Theorem~\ref{thm:compact loc sob}, with no loss of generality we can assume that there exists 
the weak $H^{1, 2}$-limit function $h \in H^{1, 2}(B_R(x), \dist, \meas)$ of $\hat{f}_i$ on $B_R(x)$.
Moreover, Theorem~\ref{thm:equiv} yields 
\begin{equation}\label{eq:compact supp}
f-h \in H^{1, 2}_0(B_R(x), \dist, \meas),
\end{equation}
and Theorem~\ref{thm:stability lap} shows that $h \in \mathcal{D}(\Delta, B_R(x))$ with $\Delta_{x,R} h=g$.
In particular our assumption, $\lambda_1^D(B_R(x))>0$, together with the uniqueness part of Lemma~\ref{lem:existence} 
yield $f=h$, which completes the proof of the first part. 

Next we assume that (\ref{eq:equiv strong}) is satisfied.
Then since $\hat{f}_i-f_i$ $L^2$-strongly converge to $\hat{f}-f$ on $B_R(x)$, we have 
\begin{align*}
\|\Gamma_i(\hat{f}_i)\|_{L^1(B_R(x_i))}&=\int_{B_R(x_i)}(\Gamma_i( \hat{f}_i, f_i)  +\Gamma_i(\hat{f}_i,  (\hat{f}_i-f_i)) \dist \meas_i \\
&=\int_{B_R(x_i)}(\Gamma_i(\hat{f}_i, f_i ) -g_i (\hat{f}_i-f_i)) \dist \meas_i \\
&\to \int_{B_R(x)}(\Gamma(\hat{f}, f) -g(\hat{f}-f)) \dist \meas =\|\Gamma(\hat{f})\|_{L^1(B_R(x))},
\end{align*}
which yields that $\hat{f}_i$ converge strongly to $\hat{f}$ on $B_R(x)$ in $H^{1, 2}$.
The converse implication can be checked by a similar argument.

Finally we prove the sufficiency of the conditions mentioned in the statement, for the validity of (\ref{eq:equiv strong}).
If $f_i \in H^{1, 2}_0(B_R(x_i), \dist_i, \meas_i)$ for all $i$, then since Theorem \ref{thm:equiv} shows $f \in H^{1, 2}_0(B_R(x), \dist, \meas)$, we have
$$
\int_{B_R(x_i)}\Gamma_i( \hat{f}_i, f_i) \dist \meas_i=
-\int_{B_R(x_i)}g_if_i\dist \meas_i \to -\int_{B_R(x)}gf\dist \meas=\int_{B_R(x)}\Gamma( \hat{f}, f) \dist \meas.
$$

Next we assume that $\sup_i\|\Gamma_i(f_i)\|_{L^p(B_R(x_i))}<\infty$ for some $p>1$.
Then since Young's inequality shows
$$
|\Gamma_i(\hat{f}_i,f_i )|^q \le |\Gamma_i(\hat{f}_i)|^{q/2}|\Gamma_i(f_i)|^{q/2} \le 
\frac{q}{2}\Gamma_i(\hat{f}_i) + \frac{2-q}{2}\Gamma_i(f_i)^{q/(2-q)}
$$
for any $q \in (1, 2)$, applying this in the case when $p=2q/(2-q)$ yields $\sup_i\|\Gamma_i(\hat{f}_i,  f_i )\|_{L^q(B_R(x_i))}<\infty$. 
In particular Theorem~\ref{thm:local gradient} yields that $\Gamma( \hat{f},  f) \in L^q(B_R(x), \meas)$.

Note that Theorems \ref{thm:stability lap} and \ref{thm:local gradient} yield
\begin{equation}\label{eq:conti gradi}
\int_{B_r(x_i)}\Gamma_i(\hat{f}_i,  f_i) \dist \meas_i \to \int_{B_r(x)}\Gamma(\hat{f}, f)\dist \meas\qquad \forall r<R.
\end{equation}
Since 
$$
\left| \int_{B_R(x_i)}\Gamma_i(\hat{f}_i, f_i) \dist \meas_i-\int_{B_r(x_i)}\Gamma_i(\hat{f}_i, f_i)\dist \meas_i\right| \le 
\meas_i(B_R(x_i) \setminus B_r(x_i))^{1/\hat{q}} \|\Gamma_i(\hat{f}_i,  f_i)\|_{L^q(B_R(x_i))},
$$
where $\hat{q}^{-1}+q^{-1}=1$, (\ref{eq:annulus}) and (\ref{eq:conti gradi}) show (\ref{eq:equiv strong}).
\end{proof}

\begin{remark}\label{rem:example}
In Theorem~\ref{thm:conv stab} the assumption that $\lambda_1^D(B_{R_1}(x))>0$ is essential. We give an example as follows. 
Let us consider the mGH-convergent sequence 
$$
(\mathbb{S}^1(s), \dist_{\mathbb{S}^1(s)}, \mathcal{H}^1) \stackrel{mGH}{\to} 
(\mathbb{S}^1(1), \dist_{\mathbb{S}^1(1)}, \mathcal{H}^1).
$$
As $s \uparrow 1$, let $x_s \in \mathbb{S}^1(s)$ with $x_s \to x_1 \in \mathbb{S}^1(1)$ and let $r:=\pi$.
Note that $\lambda^D_1(B_{R_1}(x_1))=0$ for any $R_1>\pi$ because 
$$
\mathrm{LIP}_c(B_{R_1}(x_1), \dist_{\mathbb{S}^1(1)})=
\mathrm{LIP}(\mathbb{S}^1(1), \dist_{\mathbb{S}^1(1)}),
$$
hence the $H^{1,2}_0$ and the $H^{1,2}$ spaces coincide.

Take $f \in C^{\infty}(\mathbb{R})$ with $f(0) \neq f(2\pi)$ and fix a canonical local isometry $\phi: B_{\pi}(x_1) \to (0, 2\pi)$.
Then $g:=f\circ\phi$ belongs to $\mathcal{D}(\Delta, B_{\pi}(x))$ (with $\Delta_{x,\pi} f \in C^{\infty}(B_{\pi}(x_1))$ and bounded), but 
\begin{equation}\label{eq:nonextend}
g \neq h\vert_{B_{\pi}(x_1)}\qquad\forall h \in H^{1, 2}(\mathbb{S}^1(1), \dist_{\mathbb{S}^1(1)}, \mathcal{H}^1).
\end{equation}
Indeed, if $g=h\vert_{B_{\pi}(x_1)}$ for some $h \in H^{1, 2}(\mathbb{S}^1(1), \dist_{\mathbb{S}^1(1)}, \mathcal{H}^1)$, then since 
$\Gamma(g)\in L^{\infty}(B_{\pi}(x_1))$, we see that $h \in \mathrm{LIP}(\mathbb{S}^1(1), \dist_{\mathbb{S}^1(1)})$, which contradicts 
our assumption that $f(0) \neq f(2\pi)$.

Then there is no approximating sequence $f_i \in H^{1, 2}(B_{\pi}(x_i), \dist_{\mathbb{S}^1(s_i)}, \mathcal{H}^1)$ as $s_i \uparrow 1$ such that $f_i$ $L^2$-strongly converge to $g$ on $B_{\pi}(x)$ with $\sup_i\|f_i\|_{H^{1, 2}(B_{\pi}(x_i))}<\infty$.
Indeed, if such $f_i$ exist, then $B_{\pi}(x_i)=\mathbb{S}^1(s_i)$ and Theorem~\ref{thm:Mosco} yield 
$g \in H^{1, 2}(\mathbb{S}^1(1), \dist_{\mathbb{S}^1(1)}, \mathcal{H}^1)$, which contradicts (\ref{eq:nonextend}). 
\end{remark}

It is known that if $X \setminus B_{(1+\epsilon)R}(x) \neq \emptyset$ for some $\epsilon>0$, then
\begin{equation}\label{eq:00}
\int_{B_R(x)}|f|^2 \dist \meas \le C(K, N, R, \epsilon) \int_{B_R(x)}\Gamma(f)\dist \meas\qquad \forall f \in H^{1, 2}_0(B_R(x), \dist, \meas),
\end{equation}
which implies $\lambda_1^D(B_R(x))>0$ (c.f. \cite[(4.5)]{Cheeger}).

Let us give a simple corollary of Theorem~\ref{thm:conv stab}.
We first state the following:
\begin{lemma}\label{lem:app}
For any $f \in H^{1, 2}(B_R(x), \dist, \meas)$, $r\in (0,R)$, there exist 
$f_i \in H^{1, 2}(B_r(x_i), \dist_i, \meas_i) $ such that $f_i\vert_{B_r(x_i)}$ converge strongly to $f\vert_{B_r(x)}$ on $B_r(x)$ in $H^{1, 2}$.
\end{lemma}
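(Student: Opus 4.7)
The plan is to extend $f$ to a global $H^{1,2}$-function via a Lipschitz cut-off, apply the strong $\limsup$ part of Theorem~\ref{thm:Mosco} to obtain a global approximating sequence, and then verify that the resulting global strong $H^{1,2}$-convergence localizes to strong $H^{1,2}$-convergence on $B_r(x)$.

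Concretely, I would fix an intermediate radius $s \in (r, R)$ and a cut-off $\phi \in \mathrm{LIP}_c(B_R(x), \dist)$ with $0 \le \phi \le 1$ and $\phi \equiv 1$ on $B_s(x)$. Property (i) of Definition~\ref{def:reddu} then gives $\phi f \in H^{1, 2}(X, \dist, \meas)$, and since $B_s(x) \supset \overline{B_r(x)}$ both $\phi f = f$ and $\Gamma(\phi f) = \Gamma(f)$ hold $\meas$-a.e.\ on $B_r(x)$. Applying Theorem~\ref{thm:Mosco}(b) to $\phi f$ produces $g_i \in H^{1,2}(X_i, \dist_i, \meas_i)$ strongly $H^{1,2}$-convergent to $\phi f$ globally; Theorem~\ref{thm:cont_reco}(ii) then upgrades this to $L^1$-strong convergence $\Gamma_i(g_i) \to \Gamma(\phi f)$ on $X$. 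Setting $f_i := g_i$ gives an element of $H^{1,2}(B_r(x_i), \dist_i, \meas_i)$, both defining properties being inherited from global membership.

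It remains to verify strong $H^{1,2}$-convergence of $f_i$ to $f\vert_{B_r(x)}$ on $B_r(x)$; the uniform $H^{1, 2}$-bound is automatic, and one needs only the $L^2$-strong convergence on $B_r$ and the norm convergence $\|\Gamma_i(g_i)\|_{L^1(B_r(x_i))} \to \|\Gamma(f)\|_{L^1(B_r(x))}$. I would obtain both by localization: approximate $1_{B_r(x)}$ by a sequence of Lipschitz cut-offs $\eta_\eps \in \mathrm{LIP}(Y,\dist)$ with $\eta_\eps\equiv 1$ on $\overline{B_r(x)}$ and $\supp \eta_\eps \subset B_{r+\eps}(x)$, pass to the limit in $\int \eta_\eps g_i^2 \dist \meas_i$ and $\int \eta_\eps \Gamma_i(g_i) \dist \meas_i$ using the global convergences, and control the error on the thin annulus $B_{r+\eps}(x_i)\setminus B_r(x_i)$ via \eqref{eq:annulus} combined with the uniform $L^2$-bounds on $g_i$ and $\sqrt{\Gamma_i(g_i)}$. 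A diagonal extraction as $\eps \downarrow 0$ concludes.

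The main technical obstacle is precisely this final localization step: the strong $H^{1,2}$-convergence of $g_i$ to $\phi f$ is a global statement, but passage to $B_r$ is complicated by the fact that $1_{B_r(x)}$ is not continuous on $Y$. The thin-annulus estimate \eqref{eq:annulus} is exactly what makes the replacement of $1_{B_r}$ by continuous cut-offs quantitatively harmless, in the same spirit as the control of $\|f_i\|_{L^2(B_R(x_i))}$ already carried out in the proof of Theorem~\ref{thm:compact loc sob}.
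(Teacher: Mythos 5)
Your plan is the same as the paper's: extend $f$ via a cut-off to a global Sobolev function, apply Theorem~\ref{thm:Mosco}(b) to obtain $g_i$ strongly convergent in $H^{1,2}$, and appeal to Theorem~\ref{thm:cont_reco} to carry the carr\'e du champ along. You also correctly identify the one nontrivial point, namely that global strong $H^{1,2}$ convergence has to be localized to $B_r$. The paper leaves this step implicit, so your effort to spell it out is welcome, but the mechanism you propose for it has a gap.

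The problem is in the sentence where you claim to ``control the error on the thin annulus $B_{r+\eps}(x_i)\setminus B_r(x_i)$ via \eqref{eq:annulus} combined with the uniform $L^2$-bounds on $g_i$ and $\sqrt{\Gamma_i(g_i)}$.'' For the $g_i^2$-term this type of argument does work, but it needs the Sobolev embedding (a uniform $L^{2^*}$-bound on $g_i$) and H\"older, exactly as in the proof of Theorem~\ref{thm:compact loc sob}, not merely the $L^2$-bound you cite. For the $\Gamma$-term the situation is worse: a uniform $L^2$-bound on $\sqrt{\Gamma_i(g_i)}$ is just a uniform $L^1$-bound on $\Gamma_i(g_i)$, and an $L^1$-bound alone does not force $\int_{B_{r+\eps}(x_i)\setminus B_r(x_i)}\Gamma_i(g_i)\dist\meas_i$ to be small uniformly in $i$, no matter how small $\meas_i$ of the annulus is (you would need equi-integrability, which is not supplied by \eqref{eq:annulus}). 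There is no analogue of the Sobolev embedding for $\Gamma_i(g_i)$ available here, so the annulus estimate cannot absorb the gradient error term as written.

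The repair is standard and does not actually require any quantitative control on the annulus integrals of $\Gamma_i(g_i)$. Since $\Gamma_i(g_i)$ $L^1$-strongly converges to $\Gamma(g)$, one has both $\int\eta\,\Gamma_i(g_i)\dist\meas_i\to\int\eta\,\Gamma(g)\dist\meas$ for $\eta\in C_c(Y)$ and $\int\Gamma_i(g_i)\dist\meas_i\to\int\Gamma(g)\dist\meas$, so the finite measures $\Gamma_i(g_i)\meas_i$ converge to $\Gamma(g)\meas$ with converging total masses; Portmanteau then gives $\int_{B_r(x_i)}\Gamma_i(g_i)\dist\meas_i\to\int_{B_r(x)}\Gamma(g)\dist\meas$ using only $\meas(\partial B_r(x))=0$, which is where \eqref{eq:annulus} (on the limit space) really enters. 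Equivalently, and closer to your squeezing idea: use cut-offs supported in $B_{r+\eps}$ and equal to $1$ on $\overline{B_r}$ to get $\limsup_i\int_{B_r(x_i)}\Gamma_i(g_i)\dist\meas_i\le\int_{B_{r+\eps}(x)}\Gamma(g)\dist\meas$, and combine this with the lower semicontinuity inequality already established in Theorem~\ref{thm:compact loc sob} for the reverse direction; letting $\eps\downarrow 0$ and using $\Gamma(g)\in L^1$ and $\meas(\partial B_r(x))=0$ closes the argument. Either fix is a small change, but as stated your annulus step does not go through for the gradient term.
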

\begin{proof} By using cut-off functions, with no loss of generality we can assume there exists $g \in H^{1, 2}(X, \dist, \meas)$ such that 
$g\vert_{B_r(x)} = f\vert_{B_r(x)}$.
Then, applying Theorem~\ref{thm:Mosco}(b) to $g$ together with Theorem~\ref{thm:cont_reco} completes 
the proof.
\end{proof}

\begin{remark}
In Lemma~\ref{lem:app}, the assumption that $r$ is strictly smaller than $R$ is needed. See Remark~\ref{rem:example} above.
In connection with this problem, the authors do not know whether there exists a generic condition to satisfy the Mosco convergence of the following energy;
\begin{equation}\label{eq:local energy3}
\tilde{\Ch}_{B_R(x)}(f):=
\begin{cases}\frac{1}{2}\int_{B_R(x)}\Gamma(f)\dist \meas &\text{if $f|_{B_R(x)}\in H^{1, 2}(B_R(x), \dist, \meas)$;}\\
+\infty &\text{otherwise}
\end{cases}
\end{equation}
with respect to the mGH-convergence.
%
%
\end{remark}

The next approximation result,  was known in the noncollapsed metric cone setting \cite{Ding2}
and in the compact Ricci limit setting \cite{Honda3}.
Our result extends these to general $\RCD^*(K, N)$-spaces, without extra strong assumptions.

\begin{corollary}[Harmonic approximation/replacement]\label{cor:harm harm}
Let $f$ be a harmonic function on $B_R(x)$. Then,  for any $r\in (0,R)$ there exist harmonic functions 
$f_{i}$ on $B_r(x_i)$ such that $f_i$ strongly converge to $f\vert_{B_r(x)}$ in $H^{1, 2}$ on $B_r(x)$.
Moreover, for any $g_i \in H^{1, 2}(B_r(x_i), \dist_i, \meas_i)$ converging strongly to $f\vert_{B_r(x)}$ in $H^{1, 2}$ on $B_r(x)$ 
for some $r\le R$, the harmonic replacements $\hat{g}_i$ of $g_i$ converge strongly to $f\vert_{B_s(x)}$ in $H^{1, 2}$ on $B_s(x)$
for all $s \in (0, r]$ with
$$
H^{1, 2}_0(B_s(x), \dist, \meas) =\hat{H}^{1, 2}_0(B_s(x), \dist, \meas).
$$  
\end{corollary}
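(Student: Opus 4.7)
My plan is to prove the replacement assertion first, since the approximation assertion will then follow by combining it with Lemma~\ref{lem:app}.

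For the replacement assertion, I would fix $s\in(0,r]$ with $H^{1,2}_0(B_s(x),\dist,\meas)=\hat{H}^{1,2}_0(B_s(x),\dist,\meas)$ (the condition $\lambda_1^D(B_s(x))>0$ is implicit, being required for harmonic replacements to exist). Strong $H^{1,2}$-convergence of $g_i$ on $B_r(x)$ restricts to weak $H^{1,2}$-convergence of $g_i\vert_{B_s(x_i)}$ to $f\vert_{B_s(x)}$ on $B_s(x)$, so I apply Theorem~\ref{thm:conv stab} on $B_s(x)$ with vanishing source terms: the weak $H^{1,2}$ limit $\hat{h}$ of $\hat{g}_i$ on $B_s(x)$ satisfies $\Delta_{x,s}\hat{h}=0$ with $f-\hat{h}\in H^{1,2}_0(B_s(x),\dist,\meas)$, and since $f$ is already harmonic on $B_R(x)\supseteq B_s(x)$, uniqueness in Lemma~\ref{lem:existence} forces $\hat{h}=f\vert_{B_s(x)}$. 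To promote this to strong convergence, I will exploit the variational characterization: $\hat{g}_i$ minimizes $\int\Gamma_i(\cdot)\,d\meas_i$ over the affine space $g_i+H^{1,2}_0(B_s(x_i),\dist_i,\meas_i)$, hence $\int_{B_s(x_i)}\Gamma_i(\hat{g}_i)\,d\meas_i\leq\int_{B_s(x_i)}\Gamma_i(g_i)\,d\meas_i$. Restricting the strong $H^{1,2}$-convergence of $g_i$ from $B_r(x)$ to $B_s(x)$ (achieved via Theorem~\ref{thm:local gradient}(1) together with the thin annulus estimate \eqref{eq:annulus}) gives $\limsup_i\int_{B_s(x_i)}\Gamma_i(\hat{g}_i)\,d\meas_i\leq\int_{B_s(x)}\Gamma(f)\,d\meas$, while the weak $H^{1,2}$-convergence of $\hat{g}_i$, combined with the lower semicontinuity argument used in the proof of Theorem~\ref{thm:compact loc sob}, supplies the matching $\liminf$. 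Together these yield strong $H^{1,2}$-convergence.

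For the approximation assertion, I would pick $r'\in(r,R)$ with $H^{1,2}_0(B_{r'}(x),\dist,\meas)=\hat{H}^{1,2}_0(B_{r'}(x),\dist,\meas)$ (possible by Lemma~\ref{lem:ch} modulo countably many exceptions) and note $\lambda_1^D(B_{r'}(x))>0$ via \eqref{eq:00} since $r'<R$. Applying Lemma~\ref{lem:app} produces $g_i\in H^{1,2}(B_{r'}(x_i),\dist_i,\meas_i)$ strongly convergent to $f\vert_{B_{r'}(x)}$ on $B_{r'}(x)$. Then the replacement assertion (with both $r$ and $s$ taken equal to $r'$) gives harmonic replacements $\hat{g}_i$ on $B_{r'}(x_i)$ converging strongly in $H^{1,2}$ to $f\vert_{B_{r'}(x)}$. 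I then define $f_i:=\hat{g}_i\vert_{B_r(x_i)}$, which are harmonic on $B_r(x_i)$. Theorem~\ref{thm:stability lap}(3) applied to the harmonic sequence $\hat{g}_i$ on $B_{r'}(x_i)$ gives $L^1$-strong convergence of $\Gamma_i(\hat{g}_i)$ to $\Gamma(f)$ on $B_r(x)$, and this, combined with the $L^2$-strong convergence inherited on $B_r(x)$, yields strong $H^{1,2}$-convergence of $f_i$ to $f\vert_{B_r(x)}$.

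The main technical obstacle I expect is the passage from strong $H^{1,2}$-convergence on a larger ball to strong $H^{1,2}$-convergence on a smaller concentric ball. For the replacement part this will be handled through the thin annulus estimate \eqref{eq:annulus}, and for the approximation part through Theorem~\ref{thm:stability lap}(3), which provides interior $L^1$-strong gradient convergence for the harmonic sequence.
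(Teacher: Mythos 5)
Your argument is correct and rests on the same ingredients as the paper's: Lemma~\ref{lem:app} to produce a strongly $H^{1,2}$-convergent approximating sequence on a slightly larger ball, Theorem~\ref{thm:conv stab} (together with the Sobolev-space identity $H^{1,2}_0=\hat{H}^{1,2}_0$ and Theorem~\ref{thm:equiv}) to pass to harmonic replacements, uniqueness from Lemma~\ref{lem:existence} to identify the limit with $f$, and restriction to the inner ball. The one genuine variation is how you upgrade weak $H^{1,2}$-convergence of the replacements to strong convergence: you use the variational characterization of the replacement as the Dirichlet-energy minimizer, so $\int_{B_s(x_i)}\Gamma_i(\hat g_i)\,\dist\meas_i\leq\int_{B_s(x_i)}\Gamma_i(g_i)\,\dist\meas_i\to\int_{B_s(x)}\Gamma(f)\,\dist\meas$, and this $\limsup$ bound matches the $\liminf$ coming from lower semicontinuity under weak convergence. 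The paper instead appeals to the ``if and only if'' criterion \eqref{eq:equiv strong} of Theorem~\ref{thm:conv stab}, which is satisfied here because the approximating functions converge \emph{strongly} (not just weakly) in $H^{1,2}$, so Corollary~\ref{cor:conv} gives \eqref{eq:equiv strong} directly. Both routes are sound; the variational one is more self-contained and bypasses the sufficient conditions listed at the end of Theorem~\ref{thm:conv stab}, which do not obviously hold for the sequence coming from Lemma~\ref{lem:app}.

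One small slip: you claim $\lambda_1^D(B_{r'}(x))>0$ follows ``via \eqref{eq:00} since $r'<R$.'' That inequality requires $X\setminus B_{(1+\epsilon)r'}(x)\neq\emptyset$ for some $\epsilon>0$, which $r'<R$ alone does not guarantee (e.g.\ if $X=B_{r'}(x)$). The paper disposes of this case by observing that $\lambda_1^D(B_{\bar s}(x))=0$ forces every harmonic function on the ball to be constant, making the statement trivial; your proof should include the same dichotomy rather than deducing positivity from $r'<R$.
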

\begin{proof} 
Take $\bar s\in (r, R)$ with $H^{1, 2}_0(B_{\bar s}(x), \dist, \meas) =\hat{H}^{1, 2}_0(B_{\bar s}(x), \dist, \meas)$.
Note that it is not restrictive to assume that $\lambda_1^D(B_{\bar s}(x)) >0$, since all harmonic functions are
constant otherwise.

Then by Lemma~\ref{lem:app} there exist $h_i \in H^{1, 2}(B_{\bar s}(x_i), \dist_i, \meas_i)$ such that $h_i$ converge strongly to 
$f\vert_{B_{\bar s}(x)}$ in $H^{1, 2}$. Then, thanks to Theorem \ref{thm:conv stab}, the harmonic replacements $\hat{h}_i$ of $h_i$
converge strongly to $f\vert_{B_{\bar s}(x)}$ in $H^{1, 2}$ on $B_{\bar s}(x)$.
Thus, letting $f_i:=\hat{h}_i\vert_{B_r(x_i)}$, completes the proof of the first part.
Moreover, the final statement about radii $s\in (0,r]$ can also be easily checked by this argument.
\end{proof}

\begin{remark}
One might wonder about a global version of Corollary~\ref{cor:harm harm}:
\begin{itemize}
\item[] for any harmonic function $f$ on $X$ (which means that $f\vert_{B_R(x)}$ is harmonic for any $R>0$), are there a subsequence $i(j)$ 
and harmonic functions $f_{i(j)}$ on $X_{i(j)}$ such that $f_{i(j)}$ $L^2$-strongly converge to $f$ on $B_R(x)$ for any $R>0$?
\end{itemize}
The following simple example shows that the global version, as stated, does not hold.
First we check that if $g$ is harmonic on $[0,+\infty)$ (with respect to $\mathcal{L}^1$), then $g$ is constant. Indeed,
clearly $g$ has to be affine and the condition $g'(0)\phi(0)=-\int_0^{+\infty}g'(t)\phi '(t)\dist t=0$
for any $\phi \in C^{\infty}_c(\mathbb{R})$, yields $g'\equiv 0$.

Let us consider the pointed mGH-convergent family
\begin{equation}\label{eq:example2}
([0, +\infty),\dist_{eucl}, s, \mathcal{L}^1) \stackrel{mGH}{\to} (\mathbb{R}, \dist_{eucl},0, \mathcal{L}^1)\qquad
s\uparrow\infty.
\end{equation}
Then, any nonconstant harmonic function $f$ on $\mathbb{R}$ has no $L^2_{\mathrm{loc}}$-strong approximation by global harmonic functions with respect to the convergence (\ref{eq:example2}).
\end{remark}

\begin{remark}\label{rem:excess}
In the proof of the splitting theorem for Ricci limit spaces proven in \cite{CheegerColding}, the 
\textit{harmonic replacement} of the distance function played an important role, as follows.
\begin{itemize}
\item[] Let $(M_i, \dist_i, x_i,\frac{\mathrm{vol}}{\mathrm{vol}\,B_1(m_i)})$ be a pointed mGH-convergent sequence of $n$-dimensional Riemannian manifolds to a Ricci limit space $(X, \dist,x, \meas)$ with $\mathrm{Ric}_{M_i} \ge -\delta_i$,  $\delta_i \to 0$.
If $X$ contains a line $\gamma: \mathbb{R} \to X$ with $\gamma (0)=x$, then for any $\epsilon>0$ and any $R>0$  there exists 
$L_1>1$ such that for any $L \ge L_1$ and any sequence $x_i^L \in M_i$ converging to $\gamma (L) \in X$,
there exist harmonic functions $h_i$ on $B_R(x^L_i)$ with 
\begin{equation}\label{eq:harm rep ric}
\limsup_{i \to \infty}\|e_i^L-h_i\|_{H^{1, 2}(B_R(x^L_i))}<\epsilon,
\end{equation}
where $e_i^L(z):=\dist_i(x_i^L, x_i)-\dist_i (x_i^L, z)$. Moreover, we can take $h_i$ as the harmonic replacements of $e_i^L$
in $B_R(x_i^L)$.
\end{itemize}

Even though in the splitting theorem on $\RCD^*(0, N)$-spaces established in \cite{Gigli2} the harmonic replacement were not needed
(because a priori these spaces do not arise from an approximation), recently harmonic replacement have been considered in $\RCD$-setting in \cite{HanMondino}. It is worth pointing out that in the proof of harmonic replacement as above, the sharp Laplacian comparison theorem and the 
maximum principle play a key role.

We are now in a position to prove, via Corollary~\ref{cor:harm harm}, harmonic replacement for \textit{most} $R>0$ using neither 
the ``almost'' nonnegative Ricci curvature condition,  nor Laplacian comparison theorem and maximum principle. 
To see how via Corollary \ref{cor:harm harm} can be applied, we assume that the limit space 
$(X, \dist, x,\meas)$ of $(X_i, \dist_i, x_i,\meas_i)$ satisfies;
$$
(X, \dist, \meas) := (\mathbb{R}, \dist_{eucl}, \mathcal{H}^1) \times (Y, \dist_Y, \nu)
$$
for some $\RCD^*(\hat{K}, \hat{N})$-space $ (Y, \dist_Y, \nu)$ and we
denote by $\gamma$ a canonical line in $X$, i.e. $\gamma(t)=(t, y)$ with $\gamma (0)=x$ for a fixed $y \in Y$.
Note that the Busemann function $b_{\gamma}$ of $\gamma$ is equal to the projection to the $\mathbb{R}$-factor, 
i.e. $b_{\gamma}(t, z)=t$. In particular $b_{\gamma}$ is harmonic on $X$ with $\Gamma( b_{\gamma})=1$ $\meas$-a.e. in $X$.

Now, fix $R>0$ with 
$$
H^{1, 2}_0(B_R(x), \dist, \meas)=\hat{H}^{1, 2}_0(B_R(x), \dist, \meas)
$$
and, for any $L \ge 1$, take a convergent sequence $x_i^L \in X_i$ to $\gamma (L) \in X$.
Then it is easy to see that there exists a subsequence $i(j)$ such that $e_{i(j)}^j$ converge 
uniformly to $b_{\gamma}$ on each bounded subset of $X$ (in particular it is an $L^2_{\mathrm{loc}}$-strongly convergent
sequence), where 
$e_{i(j)}^j(z):=\dist_{i(j)}(x_{i(j)}^j, x_{i(j)})-\dist_{i(j)}(x_{i(j)}^j, z)$.
Since $\Gamma_{i(j)}(e_{i(j)}^j)=1$ $\meas_{i(j)}$-a.e., we have
$$
\lim_{j \to \infty}\int_{B_R(x_{i(j)})}\Gamma_{i(j)}( e_{i(j)}^j)\dist \meas_{i(j)}=\lim_{j \to \infty}\meas_{i(j)}(B_R(x_{i(j)}))=\meas (B_R(x))
=\int_{B_R(x)}\Gamma(b_{\gamma})\dist \meas.
$$
Thus, Corollary~\ref{cor:harm harm} can be applied in this case and it follows that
the harmonic replacements $\hat{e}_{i(j)}$ of $e_{i(j)}^j$ on $B_R(x_{i(j)})$ satisfy 
$$
\|e_{i(j)}^j-\hat{e}_{i(j)}\|_{H^{1, 2}(B_R(x_{i(j)}))} \to 0.
$$\end{remark}

\begin{proposition}[Local convergence for metric cones]\label{prop:cone sobolev}
Let us consider the metric cone
$$
(C(Z), \dist_{C(Z)}, r^2\meas_Z)
$$
over a compact $\RCD^*(N-2,N-1)$-space $(Z, \dist_Z, \meas_Z)$
(by \cite{Ketterer15a} $(C(Z), \dist_{C(Z)}, r^2\meas_Z)$ is an $\RCD^*(0, N)$-space).
Then 
\begin{equation}\label{eq:cone}
H^{1, 2}_0(B_R(p), \dist_{C(Z)}, r^2\meas_Z) =\hat{H}^{1, 2}_0(B_R(p), \dist_{C(Z)}, r^2\meas_Z)\qquad\forall R>0,
\end{equation}
where $p=(0, *)$ is the pole of $C(Z)=([0, \infty) \times Z)/(\{0\} \times Z)$.
\end{proposition}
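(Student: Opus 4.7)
By Lemma~\ref{lem:ch}, only the inclusion $\hat{H}^{1,2}_0(B_R(p), \dist_{C(Z)}, r^2\meas_Z) \subset H^{1,2}_0(B_R(p), \dist_{C(Z)}, r^2\meas_Z)$ requires proof, and the strategy is to exploit the scaling self-similarity of the cone. For each $\lambda > 0$, I will consider the dilation $\Phi_\lambda : C(Z) \to C(Z)$, $\Phi_\lambda(r, z) := (\lambda r, z)$, which is a bijective similarity of ratio $\lambda$ fixing the pole $p$, and under which the cone reference measure is homogeneous of some degree $d>0$ (i.e.\ $(\Phi_\lambda)_* \meas = \lambda^{-d} \meas$, with $d$ determined by the weight in the definition). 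The idea is then to approximate any given $f \in \hat{H}^{1,2}_0(B_R(p))$ by rescaled copies of itself that are compactly supported strictly inside $B_R(p)$.

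Given $f \in \hat{H}^{1,2}_0(B_R(p))$ and $\lambda \in (0, 1)$, I will set $f_\lambda := f \circ \Phi_{1/\lambda}$. Since $\dist_{C(Z)}(\cdot, p)$ coincides with the cone coordinate $r$, the hypothesis $f = 0$ $\meas$-a.e.\ on $\{r > R\}$ forces $f_\lambda = 0$ $\meas$-a.e.\ on $\{r > \lambda R\}$, so that $f_\lambda$ is essentially supported in $\overline{B}_{\lambda R}(p) \Subset B_R(p)$. Because $\Phi_{1/\lambda}$ is a similarity, for Lipschitz $f$ the slope transforms as $|\nabla f_\lambda|(x) = \lambda \, |\nabla f|(\Phi_{1/\lambda}(x))$, and the change of variables formula gives the scaling identities
\[
\|f_\lambda\|_{L^2}^2 = \lambda^d \|f\|_{L^2}^2, \qquad \Ch(f_\lambda) = \lambda^{d-2} \Ch(f).
\]
By density of Lipschitz functions in $H^{1,2}$ together with the relaxation definition of $\Ch$, both identities extend to all $f \in H^{1,2}(X)$; in particular $f_\lambda \in H^{1,2}(X)$. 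A cut-off argument (multiply a Lipschitz $H^{1,2}$-approximation of $f_\lambda$ by some $\chi \in \mathrm{LIP}_c(B_R(p), \dist_{C(Z)})$ with $\chi \equiv 1$ on $\overline{B}_{\lambda R}(p)$) then yields $f_\lambda \in H^{1,2}_0(B_R(p))$.

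It will remain to show that $f_\lambda \to f$ strongly in $H^{1,2}(X)$ as $\lambda \uparrow 1$. The norm convergence $\|f_\lambda\|_{H^{1,2}} \to \|f\|_{H^{1,2}}$ is immediate from the scaling identities, since $\lambda^d, \lambda^{d-2} \to 1$. For the $L^2$-convergence $f_\lambda \to f$, I will use a standard $3\varepsilon$-argument based on density of $C_c(X)$ in $L^2(X, \meas)$: on continuous compactly supported functions the convergence follows from dominated convergence (since $\Phi_{1/\lambda} \to \mathrm{id}$ locally uniformly), and the uniform $L^2$-scaling $\|f_\lambda\|_{L^2}^2 = \lambda^d \|f\|_{L^2}^2$ propagates this to arbitrary $f \in L^2(X, \meas)$. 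Finally, $L^2$-convergence together with the uniform $H^{1,2}$-bound yields weak $H^{1,2}$-convergence $f_\lambda \weakto f$ (by compactness and uniqueness of weak limits), and combined with the matching norm convergence this upgrades to strong $H^{1,2}$-convergence in the Hilbert space $H^{1,2}(X)$, thereby placing $f$ in $H^{1,2}_0(B_R(p))$.

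The main technical point will be the passage from the slope-scaling identity, which is immediate for Lipschitz functions via the chain rule for slopes, to the Cheeger-energy scaling identity at the level of $H^{1,2}$-functions. This is handled by exploiting the density of $\mathrm{LIP}_b(X) \cap L^2(X,\meas)$ in $H^{1,2}$ together with the $L^2$-lower semicontinuity of $\Ch$ in the relaxation definition, noting that $\Phi_\lambda$ induces a bounded invertible linear operator on $H^{1,2}(X)$ which rescales relaxed gradients consistently with the similarity structure of the cone.
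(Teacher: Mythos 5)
Your proof is correct and uses the same key idea as the paper: exploit the dilation self-similarity of the cone to carry a function that vanishes outside $B_R(p)$ to one compactly supported strictly inside $B_R(p)$, and then pass to the limit as the dilation ratio tends to $1$. The main organizational difference is in the order of operations. The paper first invokes Lemma~\ref{lem:ch} to produce compactly supported Lipschitz approximations $f_\epsilon$ of $f$ in the slightly larger ball $B_{(1-\epsilon)^{-1}R}(p)$, and only then contracts the support of these already-Lipschitz functions, so that the behaviour of the Cheeger energy under the dilation needs only be checked at the Lipschitz level (where it is immediate from the chain rule for slopes). You instead pull $f$ itself back under the dilation $\Phi_{1/\lambda}$, and then argue by density and $L^2$-lower semicontinuity of $\Ch$ that the slope-scaling identity persists in $H^{1,2}(X)$, followed by a cut-off to place $f_\lambda$ in $H^{1,2}_0(B_R(p))$; this is exactly the extra technical step you correctly flag at the end, and your sketch of it (density of $\mathrm{LIP}_b\cap L^2$ plus the relaxation definition of $\Ch$) is sound. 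Your final upgrade from weak to strong $H^{1,2}$-convergence via matching norms in the Hilbert space $H^{1,2}(C(Z),\dist_{C(Z)},r^2\meas_Z)$ is the right way to close. Both routes are valid; the paper's is slightly more economical because it defers the scaling to the Lipschitz class and avoids proving that the dilation acts boundedly on $H^{1,2}$. One small remark: the reduction to the single inclusion $\hat{H}^{1,2}_0\subset H^{1,2}_0$ does not come from Lemma~\ref{lem:ch} but from the trivial observation after Definition~\ref{def:loc sob} that $H^{1,2}_0(U)\subset\hat{H}^{1,2}_0(U)$ always holds; this does not affect the rest of your argument.
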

\begin{proof}
For any $\epsilon \in (0, 1)$, let us consider the $1$-Lipschitz map $\Phi_{\epsilon}: C(Z) \to C(Z)$ defined by $\Phi_{\epsilon}((t, z)):=((1-\epsilon)t, z)$.
By Lemma~\ref{lem:ch}, for any $f \in \hat{H}^{1, 2}(B_R(p), \dist_{C(Z)}, r^2\meas_Z)$ there exists $f_{\epsilon} \in \mathrm{LIP}_c(B_{(1-\epsilon)^{-1}R}(p), \dist_{C(Z)})$ such that $\|f-f_{\epsilon}\|_{H^{1, 2}}<\epsilon$.
Then, since it is easy to check that the support of $g_{\epsilon}:=f_{\epsilon} \circ \Phi_{\epsilon}$ is included in $B_R(p)$ and that $\|g_{\epsilon} - f\|_{H^{1, 2}} \to 0$ as $\epsilon \downarrow 0$, we obtain that $f \in H^{1, 2}_0(B_R(p), \dist_{C(Z)}, r^2\meas_Z)$, which completes the proof of (\ref{eq:cone}). 
\end{proof}

We end this section by giving the following stability result, already known in the case when the sequence consists of noncollapsed Riemannian manifolds with almost nonnegative Ricci curvature in \cite{CheegerColding}.

\begin{theorem}\label{thm:metric cone distance}
Assume that the limit space $(X, \dist, x,\meas)$ is a $\RCD^*(0,N)$ space satisfying
$$r \mapsto \frac{\meas (B_r(x))}{r^N}\qquad\text{is constant in $(0,+\infty)$}.$$
Then the solutions of the Dirichlet problem, $\Delta_{x_i,R} \hat{f}_i=2N$ with $\dist (x_i, \cdot )^2-\hat{f}_i \in H^{1, 2}_0(B_R(x_i), \dist_i, \meas_i)$, satisfy
\begin{equation}\label{eq:cone rep}
\lim_{i\to\infty}\|\dist ( x_i, \cdot)^2 -\hat{f}_i\|_{H^{1, 2}}= 0\qquad\forall R>0.
\end{equation}
\end{theorem}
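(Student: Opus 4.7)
The plan is to apply Theorem~\ref{thm:conv stab} with $f_i:=\dist(x_i,\cdot)^2$, $g_i\equiv 2N$, and to identify the limit of the resulting $\hat f_i$ with $\dist(x,\cdot)^2$ itself. By the volume-cone rigidity theorem of De Philippis--Gigli applied to the $\RCD^*(0,N)$ space $(X,\dist,\meas)$, the constancy of $r\mapsto \meas(B_r(x))/r^N$ forces $(X,\dist,\meas)$ to be a metric cone with pole $x$. Consequently Proposition~\ref{prop:cone sobolev} delivers
$$H^{1,2}_0(B_R(x),\dist,\meas)=\hat H^{1,2}_0(B_R(x),\dist,\meas)\qquad\forall R>0,$$
which is the structural hypothesis required by Theorem~\ref{thm:conv stab}; the positivity $\lambda_1^D(B_R(x))>0$ follows from \eqref{eq:00} since a nontrivial cone extends beyond every ball.

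Next I would check that on the limit cone one has $\dist(x,\cdot)^2\in\mathcal D(\Delta,B_R(x))$ with $\Delta_{x,R}\dist(x,\cdot)^2=2N$. This is the rigidity case of the Laplacian comparison $\Delta r^2\leq 2N$ valid on any $\RCD^*(0,N)$ space, and can also be obtained directly from the cone product structure through the radial calculus $\Delta(f(r))=f''(r)+\tfrac{N-1}{r}f'(r)$ applied to $f(r)=r^2$. Testing the resulting global identity against $h\in H^{1,2}_0(B_R(x),\dist,\meas)$ yields the Dirichlet-Laplacian equality; in particular $\dist(x,\cdot)^2$ itself is a solution of the limit Dirichlet problem with source $2N$.

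To feed $f_i$ into Theorem~\ref{thm:conv stab} I need strong $H^{1,2}$-convergence of $f_i$ to $f:=\dist(x,\cdot)^2$ on $B_R(x)$. Uniform convergence of $\dist(x_i,\cdot)$ to $\dist(x,\cdot)$ on bounded subsets of $Y$ (an immediate consequence of the extrinsic framework) together with the thin-annulus estimate \eqref{eq:annulus} gives $L^2$-strong convergence of $f_i$ to $f$ on $B_R(x)$. For the gradient norms, the identity $\Gamma_i(\dist(x_i,\cdot))=1$ $\meas_i$-a.e.\ (standard on $\RCD$ spaces via the Sobolev-to-Lipschitz property) and the chain rule give $\Gamma_i(f_i)=4\dist(x_i,\cdot)^2$ $\meas_i$-a.e., and analogously in the limit. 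Then weak convergence of $\meas_i$ to $\meas$ in duality with $C_c(Y)$, uniform convergence of the integrand, and \eqref{eq:annulus} yield
$$\int_{B_R(x_i)}\Gamma_i(f_i)\dist\meas_i=4\int_{B_R(x_i)}\dist(x_i,\cdot)^2\dist\meas_i\longrightarrow 4\int_{B_R(x)}\dist(x,\cdot)^2\dist\meas=\int_{B_R(x)}\Gamma(f)\dist\meas,$$
which is strong $H^{1,2}$-convergence on $B_R(x)$ in view of Lemma~\ref{lem:sci}.

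With these ingredients Theorem~\ref{thm:conv stab} applies (the sufficient criterion $\sup_i\|\Gamma_i(f_i)\|_{L^\infty(B_R(x_i))}<\infty$ is trivial since $\Gamma_i(f_i)\leq 4R^2$), producing $\hat f_i\to\hat f$ strongly in $H^{1,2}$ on $B_R(x)$, where $\hat f$ is the unique element of $\mathcal D(\Delta,B_R(x))$ satisfying $\Delta_{x,R}\hat f=2N$ and $\dist(x,\cdot)^2-\hat f\in H^{1,2}_0(B_R(x),\dist,\meas)$. By the second paragraph and the uniqueness part of Lemma~\ref{lem:existence}, $\hat f=\dist(x,\cdot)^2$. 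Since $\dist(x_i,\cdot)^2-\hat f_i\in H^{1,2}_0(B_R(x_i),\dist_i,\meas_i)$, its zero-extension is a global $H^{1,2}$-function whose norm coincides with the $H^{1,2}$-norm on $B_R(x_i)$, and Corollary~\ref{cor:conv} then yields \eqref{eq:cone rep}. The principal difficulty is the second step (upgrading the Laplacian-comparison inequality to equality on the cone limit, independently of any smooth approximation and without invoking maximum-principle arguments); once this is in place, the result is a fairly mechanical assembly of the stability machinery already developed in the paper.
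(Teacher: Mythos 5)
Your proof is correct and follows essentially the same route as the paper: De Philippis--Gigli to identify $(X,\dist,\meas)$ with a metric cone, Proposition~\ref{prop:cone sobolev} for the coincidence of $H^{1,2}_0$ and $\hat H^{1,2}_0$, the cone identity $\Delta_{x,R}\dist(x,\cdot)^2=2N$, and then Theorem~\ref{thm:conv stab} applied to $f_i=\dist(x_i,\cdot)^2$. You supply several details the paper leaves tacit (in particular the strong $H^{1,2}$-convergence of $\dist(x_i,\cdot)^2$ via $\Gamma_i(\dist(x_i,\cdot))=1$ and the thin-annulus estimate, the positivity of $\lambda_1^D$, and the passage from strong convergence of $\hat f_i$ to the norm statement \eqref{eq:cone rep} via Corollary~\ref{cor:conv}), but the argument is the same one.
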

\begin{proof}
Recall that from \cite{DePhilippisGigli} we can represent
$$
(X, x, \dist, \meas) = (C(Z), p, \dist_{C(Z)}, r^2\meas_Z)
$$
for some $\RCD^*(N-2,N-1)$-space $(Z, \dist_Z, \meas_Z)$ and that 
$\dist_{C(Z)} (p, \cdot)^2 \vert_{B_R(p)} \in \mathcal{D}(\Delta, B_R(p))$ with 
$\Delta_{p,R} \dist_{C(Z)} (p, \cdot)^2=2N$.
Since $\dist_i(x_i, \cdot)^2\vert_{B_R(x_i)}$ converge strongly to $\dist_{C(Z)}(p, \cdot)^2$ in $H^{1, 2}$ on $B_R(p)$, 
applying Theorem~\ref{thm:conv stab} with Proposition~\ref{prop:cone sobolev} yields (\ref{eq:cone rep}).
\end{proof}


\begin{thebibliography}{GMS13}

\bibitem[AGS14a]{AmbrosioGigliSavare13}
	\textsc{L. Ambrosio, N. Gigli, G. Savar\'e}:
	\textit{Calculus and heat flow in metric measure spaces and applications to spaces with Ricci bounds from below}.
	Inventiones Mathematicae, \textbf{195} (2014), 289--391.

\bibitem[AGS14b]{AmbrosioGigliSavare14}
	\textsc{L. Ambrosio, N. Gigli, G. Savar\'e}:
	\textit{Metric measure spaces with Riemannian Ricci curvature bounded from below.}
	Duke Math. J. \textbf{163} (2014), 1405--1490.     
	
\bibitem[AGS15]{AmbrosioGigliSavare15}
       \textsc{L. Ambrosio, N. Gigli, G. Savar\'e}:
       \textit{Bakry-\'Emery curvature-dimension condition and Riemannian Ricci curvature bounds}.
       Annals of Probability, \textbf{43} (2015), 339--404.

\bibitem[ACDM15]{AmbrosioColomboDiMarino}
      \textsc{L. Ambrosio, M. Colombo, S. Di Marino}:
      \textit{Sobolev spaces in metric measure spaces: reflexivity and lower semicontinuity of slope.}
      Advanced Studies in Pure Mathematics, {\bf 67} (2015), 1--58.

\bibitem[AMS15]{AmbrosioMondinoSavare}
        \textsc{L. Ambrosio, A. Mondino, G. Savar\'e}:
\textit{Nonlinear diffusion equations and curvature conditions in metric measure spaces.}
ArXiv preprint 1509.07273, to appear in Memoirs AMS.

\bibitem[AH16]{AmbrosioHonda}
	\textsc{L. Ambrosio, S. Honda}:
	\textit{New stability results for sequences of metric measure spaces with uniform Ricci bounds from below.}
	ArXiv preprint 1605.07908, to appear as a chapter in Measure Theory in Non-Smooth Spaces, De Gruyter Press, edited by Nicola Gigli.

\bibitem[AST16]{AmbrosioStraTrevisan}
        \textsc{L. Ambrosio, F. Stra, D. Trevisan}: 
        \textit{Weak and strong convergence of derivations and stability of flows with respect to MGH convergence.}
        Journal of Functional Analysis, {\bf 272} (2017), 1182--1229.
    
 \bibitem[Ch99]{Cheeger}
         \textsc{J. Cheeger}: \textit{Differentiability of Lipschitz functions on metric measure spaces}. 
         Geom. Funct. Anal., \textbf{9} (1999), 428--517.
 
\bibitem[CC96]{CheegerColding}
          \textsc{J. Cheeger, T. H. Colding}:
          \textit{Lower bounds on Ricci curvature and the almost rigidity of
warped products.}  Ann. of Math., \textbf{144} (1996), 189--237.

\bibitem[CC97]{CheegerColding1}
          \textsc{J. Cheeger, T. H. Colding}:
          \textit{On the structure of spaces with Ricci curvature bounded below, I.}  J. Differential Geom. \textbf{46} (1997), 406--480.
  
\bibitem[CC00]{CheegerColding3}
          \textsc{J. Cheeger, T. H. Colding}:
          \textit{On the structure of spaces with Ricci curvature bounded below, III.}  J. Differential Geom. \textbf{54} (2000), 37--74.

\bibitem[CG71]{CheegerGromoll}
          \textsc{J. Cheeger, D. Gromoll}:
          \textit{The splitting theorem for manifolds of nonnegative Ricci curvature.}  J. Differential Geom. \textbf{6} (1997), 119--128.
          
\bibitem[CM00]{ColdingMinicozzi}
          \textsc{T. H. Colding, W. P. Minicozzi}:
          \textit{Liouville theorems for harmonic sections and applications.}  Comm. Pure Appl. Math. \textbf{51} (1998), 113--138.
 
\bibitem[DePhG16]{DePhilippisGigli}
        \textsc{G. DePhilippis, N. Gigli}:
        \textit{From volume cone to metric cone in the nonsmooth setting.}
 arXiv:1512.03113, to appear in GAFA.

\bibitem[D02]{Ding}
         \textsc{Y. Ding}:
         \textit{Heat kernels and Green's functions on limit spaces.}
         Comm. Anal. Geom. \textbf{10}, (2002) 475--514. 

\bibitem[D04]{Ding2}
         \textsc{Y. Ding}:
         \textit{An existence theorem of harmonic functions with polynomial growth.}
         Proc. AMS. \textbf{132} (2004), 543--551.

\bibitem[EKS15]{ErbarKuwadaSturm}
        \textsc{M. Erbar, K. Kuwada, K.-T. Sturm}:
        \textit{On the equivalence of the entropic curvature-dimension condition and Bochner's inequality on metric measure spaces.}
        Invent. Math., \textbf{201} (2015), 993--1071. 
   
\bibitem[F87]{Fukaya}
        \textsc{K. Fukaya},
        \textit{Collapsing of Riemannian manifolds and eigenvalues of Laplace operator.}
         Invent. Math., \textbf{87} (1987), 517--547.

\bibitem[GMS13]{GigliMondinoSavare13}
	\textsc{N. Gigli, A. Mondino, G. Savar\'e}:
	\textit{Convergence of pointed non-compact metric measure spaces and
	stability of Ricci curvature bounds and heat flows}.
	Proceedings of the London Mathematical Society, \textbf{111} (2015), 1071--1129. 

\bibitem[G15]{Gigli1}
        \textsc{N. Gigli}:
        \textit{On the differential structure of metric measure spaces and applications.}
      Mem. Am. Math. Soc., \textbf{236} (2015), no. 1113.  
	
\bibitem[G13]{Gigli2}
        \textsc{N. Gigli}:
        \textit{The splitting theorem in non-smooth context.}
      arXiv:1302.5555.

\bibitem[HK00]{HaKo}
        \textsc{P. Haj\l asz, P.  Koskela}: \textit{Sobolev met Poincar\'e.}
 Mem. Amer. Math. Soc.  {\bf 145}  (2000),  no. 688.

\bibitem[HM17]{HanMondino}
        \textsc{B. Han, A. Mondino}:
        \textit{Angles between curves in metric measure spaces.}
 arXiv:1701.05000.
 
\bibitem[H14]{Honda}
        \textsc{S. Honda}:
        \textit{A weakly second-order differential structure on rectifiable metric measure spaces.}
        Geom. Topol., \textbf{18} (2014), 633--668.
         
\bibitem[H15]{Honda2}
        \textsc{S. Honda}:
        \textit{Ricci curvature and $L^p$-convergence.}
        J. Reine Angew Math., \textbf{705} (2015), 85--154.
        
\bibitem[H14]{Honda3}        
        \textsc{S. Honda}:
        \textit{Elliptic PDEs on compact Ricci limit spaces and applications.} arXiv:1410.3296v5.
        To appear in Memoirs of the AMS.

\bibitem[H16]{Honda7}
        \textsc{S. Honda}:
        \textit{Ricci curvature and Orientability.}
        arXiv:1610.02932.

\bibitem[K15]{Ketterer15a}
        \textsc{C. Ketterer}:
        \textit{Cones over metric measure spaces and the maximal diameter theorem.}
        J. Math. Pures Appl. (9) \textbf{103} (2015), 1228--1275.

\bibitem[KS03]{KuwaeShioya03}
        \textsc{K. Kuwae, T. Shioya}:
        \textit{Convergence of spectral structures: a functional analytic theory and its applications to spectral
geometry}. Commun. Anal. Geom., 11 (2003), 599--673.

\bibitem[MN14]{MondinoNaber}
        \textsc{A. Mondino, A. Naber}:
        \textit{Structure theory of metric measure spaces with lower Ricci curvature bounds}. arXiv:1405.2222.

\bibitem[Pet03]{Petrunin2}
        \textsc{A. Petrunin:}
\textit{Harmonic functions on Alexandrov spaces and their applications.}
      Elect. Res. Announ. of the AMS, \textbf{9} (2003), 135--141.
      
\bibitem[Pet11]{Petrunin}
        \textsc{A. Petrunin:}
\textit{Alexandrov meets {L}ott-{V}illani-{S}turm.}
      M\"unster J. Math., \textbf{4} (2011), 53--64.
      
\bibitem[Ra12]{Rajala}
        \textsc{T.~Rajala:}
	\textit{Local Poincar\'e inequalities from stable curvature conditions on metric spaces.}
	Calc. Var. PDE, {\bf 44} (2012), 477--494. 
	
\bibitem[S98]{Shanmugalingam}
	\textsc{N. Shanmugalingam}:
	\textit{Newtonian spaces: an extension of Sobolev spaces to metric measure spaces}.
	 Rev. Mat. Iberoamericana, \textbf{16} (2000), 243--279.

\bibitem[St06]{Sturm06}
	\textsc{K.-T. Sturm}:
	\textit{On the geometry of metric measure spaces, I and II}.
	Acta Math. \textbf{196} (2006), 65--131 and 133--177.

\bibitem[Xu14]{Xu}
       \textsc{G. Xu}:
       \textit{Large time behavior of the heat kernel.} 
       J. Differential Geom.  \textbf{98} (2014), 467--528.

\bibitem[ZZ10]{ZhangZhu}
       \textsc{H.-C. Zhang and X.-P. Zhu}:
       \textit{Ricci curvature on Alexandrov spaces and rigidity theorems}. Commun. Anal. Geom. 
       \textbf{18} (2010), 503--553.

\bibitem[ZZ17]{ZhangZhu16} 
        \textsc{H.-C. Zhang and X.-P. Zhu}:
        \textit{Weyl's law on $\RCD^*(K,N)$ metric measure spaces.}
        arXiv:1701.01967

\end{thebibliography}
\end{document}